\newtheorem{theorem}{Theorem}[section]
\newtheorem{lemma}[theorem]{Lemma}
\newtheorem{claim}[theorem]{Claim}
\theoremstyle{definition}
\newtheorem{definition}[theorem]{Definition}
\theoremstyle{remark}
\newtheorem{remark}[theorem]{Remark}
\numberwithin{equation}{section}
\newtheoremstyle{noparens}%
  {}{}%
{}{}%
{\bfseries}{.}%
{ }%
{\thmname{#1}\thmnumber{ #2}\thmnote{ #3}}
\theoremstyle{noparens}
\newtheorem*{question*}{Question}
\newtheorem*{theorem*}{Theorem}
\newcommand{\uhr}{{\upharpoonright}}
\def\t{\tilde}
\def\mcal{\mathcal}
\def\mb{\mathbf}
\def\<{<^*}
\def\>{>^*}
\def\h{\hat}
\title{Which \DNR\ can be minimal}
\author{Lu Liu}
\address{Department of Mathematics,
Central South University,
City Changsha, Hunan Province,
China. 410083}
\email{g.jiayi.liu@gmail.com}
\subjclass[2010]{Primary 68Q30 ; Secondary 03D32 03D80 28A78}
\keywords{computability theory, algorithmic randomness theory, Schnorr random, reverse math}
\begin{document}

\def\m{m}
\def\defeq{\overset{def}{=}}
\def\mbP{\mathbb{P}}
\def\U{U}
\def\Markov{Markov}
\def\mbE{\mathbb{E}}
\def\Tr{Tr}
\def\heighthomogeneous{height-homogeneous}
\def\allowsplit{allow split}
\def\DNR{\text{DNR}}

\maketitle

\begin{abstract}

In \cite{khan2017forcing}, Khan and Miller proved that
for every
computable non decreasing unbounded
function $h\in \omega^\omega$
(henceforth order function), if $h$ is sufficiently
large, then there exists
a $\DNR_h$ that is of minimal degree.
Where $h$ has to satisfy $\lim_{n\rightarrow\infty}
h(n)/(2^{k\cdot \prod_{m<n}h(m)})=\infty$ for all $k>0$.
Their core argument is that we can thin the
tree by a factor of $2^j$ to make
$j$ Turing functional split.
We improve their result by reducing this factor to $j$.
Thus we show that for every order function
$h$ with $\lim_{n\rightarrow\infty}
h(n)/( \prod_{m<n}h(m))^k=\infty$
for all $k>0$, there exists a $\DNR_h$ of minimal degree.
We answer a question of Brendle, Brooke-Taylor, Ng and Nies \cite{brendle2015analogy}
by showing that there exists a $G\in \omega^\omega$ such that
$G$ is weakly meager covering,
$G$ does not compute any Schnorr random real
and $G$ does not Schnorr cover REC.

\end{abstract}

\section{Introduction}

In \cite{khan2017forcing}, Khan and Miller proved that
for every sufficiently large
order function $h$, every oracle $X$, there exists
a $\DNR^X$ function, namely $G$, with $G\leq h$  that is of minimal degree.
Where $h$ has to satisfy $\lim_{n\rightarrow\infty}
h(n)/(2^{k\cdot \prod_{m<n}h(m)})=\infty$ for all $k$.
It is not known whether the fast growing condition on
$h$ can be eliminated. i.e., whether there exists,
for every order function $h$ and every oracle $X$, a $\DNR^X$ function $G$ that is of minimal
degree.
This question is closely related to another
question, whether there exists a real of hausdorff dimension
$1$ that is of minimal degree.
The existence of such real
 implies that the hausdorff dimension of reals of minimal degree
is $1$. A yes answer would confirm
the existence of such real.
We make a progress toward this direction
by improving Khan and Miller's result.
We release the
fast growing condition on $h$ to $\lim_{n\rightarrow\infty}
h(n)/( \prod_{m<n}h(m))^k=\infty$ for all $k>0$
(Theorem \ref{schth1}).

Cardinal characteristic study
has been an important direction in set theory.
The recent study of
Brendle, Brooke-Taylor, Ng and Nies\cite{brendle2015analogy}
pointed out an analog between
many results of cardinal characteristic and
results in computability theory.
\cite{brendle2015analogy} pointed out some analog between
notions in cardinal characteristic and computability theory
(mostly algorithmic randomness theory) and
shows how results in cardinal characteristic
can be translated in to results in computability theory.
Thus answering the computability theoretic analog
of a question cardinal characteristic provides ideas and directions for the original problem.
We answer a question in their paper
by showing that it is possible
 to avoid Schnorr randomness and Schnorr covering REC simultaneously
in \DNR\ (Theorem \ref{schth2})
where REC refers to the class of computable members
in $2^\omega$.
Khan and Miller \cite{khan2017forcing}, improving a theorem of Greenberg and Miller \cite{greenberg2011diagonally},
shows that there exists,
for every order function $h$,
a $\DNR_h$ function $G$ such that
$G$ does not compute any Kurtz random real
(and therefore does not compute any Schnorr random real).
The part of our proof concerning avoid Schnorr randomness,
takes a similar frame work as in \cite{khan2017forcing}
\cite{greenberg2011diagonally} but is different in combinatorial aspects.

Both of our results concerns bushy tree argument.
This argument is widely used in computability theory\cite{Kumabe2009fixed}\cite{Ambos-Spies2004Comparing}
\cite{greenberg2011diagonally}\cite{Beros2013DNC}\cite{Bienvenu2016Diagonally}\cite{Dorais2014Comparing}.
It's firstly invented by Kumabe (in an unpublished paper)
and later simplified by Kumabe and Lewis \cite{Kumabe2009fixed} to answer a question of
Sacks that whether there exists a $\DNR$ of minimal degree.
Ambos-Spies, Kjos-Hanssen, Lempp, and Slaman \cite{Ambos-Spies2004Comparing}
proved that over $\mathsf{RCA}$,
$\mathsf{WWKL}$ is stronger than
$\mathsf{DNR}$, answering a question of \cite{giusto2000located}.
A recent introduction of this method can be found in
\cite{khan2017forcing}.
See also remark \ref{schremark1} that how
bushy tree argument resembles many arguments
in reverse math.
We end up this section by introducing our notations and
the bushy tree argument.

\noindent \textbf{Notations}.
We write $(\Psi^\tau\uhr N)\downarrow$ if
$\Psi^\tau(n)\downarrow$ for all $n\leq N$.
We write $h^{<\omega}$ for
$\{\sigma\in \omega^{<\omega}:\sigma(n)\leq h(n)\text{ for all }n\leq |\sigma|\}$.
$\bot $ denote empty string.
For a tree $T$,
   we write $|\rho|_T$ for the $T$-length of $\rho$, i.e.~$|\rho|_T =n+1$ where $n$ is the number of proper initial segments of $\rho$ in $T$.
   For a string $\rho \in 2^{<\omega}$, we let $[\rho]^{\preceq}=\{\sigma: \sigma\succeq \rho\}$;
similarly, for a set $S\subseteq 2^{<\omega}$, let $[S]^{\preceq} = \{\sigma: \sigma\succeq \rho\text{ for some }\rho\in S\}$;
for a tree $T$, let $[T]$ denote the set of infinite paths on $T$;
for $\rho\in \omega^{<\omega}$,
 let $[\rho]= \{X\in \omega^\omega:X\succeq \rho\}$;
 for a finite set $V\subseteq 2^{<\omega}$, let $[V]=\{[\rho]:\rho\in V\}$.
For every non empty set $S\subseteq \omega^{<\omega}$,
let $\ell(S)$ denote the set of leaves of
$S$, i.e., $\{ \sigma\in S: [\sigma]^\preceq\cap S = \{\sigma\} \}$.
We define $\ell(\emptyset) = \{\bot\}$.
We say $\eta$ is the \emph{stem} of a tree
$T$ if $\eta\in T$ and $T\subseteq [\eta]^\preceq$.

\begin{definition}
For a function $p:\omega\rightarrow \mathbb{Q}$,
a tree $T$ is $p$-\emph{bushy }
over $\rho$
if
for every $\tau\in (T\cap [\rho]^\preceq)\cup \{\rho\}$ that is not a leaf
of $T$,
$\tau$ has
at least $p(|\tau|)$ many immediate successor in $T$.
$T$ is $p$-bushy from level $n$ to level $m$ if for every
$\tau\in T$ that is not a leaf, if $n\leq |\tau|\leq m$,
then $\tau$ has at least $p(|\tau|)$ many immediate successor in $T$.
$T$ is $p$-bushy above level $m$ if it is $p$-bushy from level
$m$ to level $\infty$.
A set $S$ is $p$-\emph{big} over $\rho$ if there
exists a finite tree $T$ that is $p$-bushy over $\rho$
such that $\ell(T)\subseteq S$;
$S$ is $p$-\emph{small} over $\rho$ if it is not
$p$-big over $\rho$.

\end{definition}
\begin{lemma}\label{schlem4}
Given two sets $B,C\subseteq \omega^{<\omega}$:
\begin{enumerate}

\item If $B\cup C$ is $(p+q)$-big over $\rho$,
then either $B$ is $p$-big over $\rho$
or $C$ is $q$-big over $\rho$.

\item If $B$ is $p$-big over $\rho$
and $C\subseteq B$ is $q$-small
over $\rho$,
then $B\setminus C$ is $(p-q)$-big over $\rho$.
\end{enumerate}
\end{lemma}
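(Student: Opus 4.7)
The plan is to prove part~(1) by induction on the size of the witnessing tree, and then derive part~(2) as a direct corollary of the contrapositive of part~(1).

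For part~(1), suppose $T$ is a finite tree that is $(p+q)$-bushy over $\rho$ with $\ell(T)\subseteq B\cup C$. I proceed by induction on $|T|$ (or equivalently on the maximum depth of $T$ below $\rho$). In the base case, $\rho$ is itself a leaf of $T$, so $\rho\in B\cup C$; the singleton $\{\rho\}$ is vacuously $p$-bushy over $\rho$ (it has no non-leaf nodes), so if $\rho\in B$ then $B$ is $p$-big over $\rho$, and similarly for $C$. In the inductive step, $\rho$ is not a leaf, so it has at least $p(|\rho|)+q(|\rho|)$ many immediate successors in $T$. For each such successor $\sigma$, the subtree of $T$ rooted at $\sigma$ is $(p+q)$-bushy over $\sigma$ with leaves in $B\cup C$, so by the induction hypothesis, either $B$ is $p$-big over $\sigma$ or $C$ is $q$-big over $\sigma$. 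Let $A_B$ (resp.\ $A_C$) be the set of immediate successors of $\rho$ in $T$ of the first (resp.\ second) kind; then $|A_B|+|A_C|\geq p(|\rho|)+q(|\rho|)$, so either $|A_B|\geq p(|\rho|)$ or $|A_C|\geq q(|\rho|)$.

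In the first case, I build a $p$-bushy tree over $\rho$ witnessing $B$'s $p$-bigness by taking $\rho$, giving it exactly the $p(|\rho|)$ children chosen from $A_B$, and hanging below each such $\sigma$ a finite $p$-bushy tree over $\sigma$ with leaves in $B$ (which exists by the definition of $A_B$). The result is $p$-bushy over $\rho$ with leaves in $B$, so $B$ is $p$-big over $\rho$. The second case is symmetric, yielding $C$ is $q$-big over $\rho$.

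For part~(2), I argue by contrapositive of~(1). Suppose $B$ is $p$-big over $\rho$ while $C\subseteq B$ is $q$-small over $\rho$, and assume toward contradiction that $B\setminus C$ is $(p-q)$-small over $\rho$. Applying the contrapositive form of~(1) to the sets $B\setminus C$ (with parameter $p-q$) and $C$ (with parameter $q$), we conclude that $(B\setminus C)\cup C$ is $((p-q)+q)$-small, i.e.\ $p$-small, over $\rho$. But $(B\setminus C)\cup C\supseteq B$, and being $p$-small is inherited by subsets (any tree witnessing $p$-bigness of a subset also witnesses it for the superset), so $B$ is $p$-small over $\rho$, contradicting the hypothesis.

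The only subtle point is bookkeeping in the inductive construction in part~(1): one must ensure that the constructed tree remains $p$-bushy at the root, which is why I select exactly $p(|\rho|)$ children from $A_B$ (we have enough by the counting argument). Everything else is routine. I do not anticipate a serious obstacle, since these are the standard additivity and subtraction principles for bushy trees whose proofs go through verbatim for the general parameter functions $p,q:\omega\to\mathbb{Q}$.
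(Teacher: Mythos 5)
Your proof is correct and rests on the same combinatorial heart as the paper's: at each non-leaf node, a pigeonhole count forces either $p$-many children over which $B$ is $p$-big or $q$-many over which $C$ is $q$-big. The paper packages this as a one-shot global argument (define $\h{T}=\{\sigma\in T: B\text{ is }p\text{-big over }\sigma\}$ and observe $T\setminus\h{T}$ is $q$-bushy with leaves in $C$) rather than as an explicit structural induction, but the two are essentially the same proof.
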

\begin{proof}
For item (1),
let $T$ be a finite tree witnessing $B\cup C$ is
$(p+q)$-big over $\rho$.
Let $\h{T} = \{\sigma\in T: B\text{ is }p\text{-big over }\sigma\}$.
If $\rho\in \h{T}$, then we are done since this means $B$ is $p$-big over $\rho$.
Suppose $\rho\notin \h{T}$. Note that for every $\sigma\in T\setminus \h{T}$
that is not a leaf of $T$,
$\sigma$ admits at least $q(|\sigma|)$ many immediate successors in $T$ that are
contained in $T\setminus \h{T}$
(otherwise it admits at least  $p(|\sigma|)$
many immediate successors in $T$ that are contained in $\h{T}$,
which implies $B$ is $p$-big over $\sigma$, a contradiction).
Therefore $\ell(T\setminus \h{T})\subseteq \ell(T)\subseteq B\cup C$.
But clearly $B\cap \ell(T)\subseteq \h{T}$. Therefore $\ell(T\setminus \h{T})\subseteq C$.
Since $T\setminus \h{T}$ is $q$-bushy over $\rho$,
$T\setminus \h{T}$ witnesses $C$ being $q$-big over $\rho$.
Item (2) follows from item (1) directly.
\end{proof}

\begin{remark}\label{schremark1}
Lemma \ref{schlem4} explains that how bushy tree method
resembles many arguments in reverse math.
A tree can be seen as an instance and its solution is
the infinite path through it. In order to make $G$
satisfy
multiple requirements say $\mcal{R}_0,\mcal{R}_1$,
we restrict $G$ on different trees $T_0,T_1$ where
$T_i$ forces $\mcal{R}_i$ and each of them is very bushy in the
sense that $\overline{T}_i$ is small. Note that
$T_0\cap T_1$ forces both requirements. It remains to
 show that $T_0\cap T_1$ is still combinatorially weak,
i.e., very bushy. This can be done by applying Lemma \ref{schlem4}
to show that
$\overline{T}_0\cup \overline{T}_1$ is still very small.
\end{remark}
\section{Generic Schnorr covering}
\label{schsec2}
 \def\mbV{\mb{V}}

 The main result in this section is
 Theorem \ref{schth2} which construct
 a $G\in\omega^\omega$ satisfying three
 weakness properties. First we introduce some necessary
 notions concerning Theorem \ref{schth2}.
Let REC denote the class of all computable members in $2^\omega$.
\begin{definition}[\cite{brendle2015analogy}]
A set $\mcal{A}\subseteq 2^\omega$ is
$A$-\emph{effectively meager} if there exists
a sequence of uniformly $\Pi_1^{0,A}$-classes
$(Q_m:m\in\omega)$ so that each $Q_m$ is nowhere dense
such that $\mcal{A}\subseteq \cup_m Q_m$.
A set $A$ is \emph{weakly meager covering}
if the class REC is $A$-effectively meager.
\end{definition}
For convenience, we adopt the following definition of
Schnorr test. Standard definition can be found in most text book
e.g. \cite{Nies2009Computability}\cite{Downey2010Algorithmic}.
\begin{definition}
 An \emph{$A$-schnorr test} is
 a sequence of finite sets $V_0,V_1,\cdots
 \subseteq 2^{<\omega}$ (denoted as
 $\mbV$) with the canonical index of $V_n$
 $A$-computable from $n$ so that
 $\m( V_n)\leq 2^{-n} $ for all $n$.
  We say $\mbV$ \emph{succeeds}
 on $X\in 2^\omega$ if
 $$X\in \bigcap\limits_{n\in\omega}
 \bigcup\limits_{m>n}[V_m].$$
A real $X$ is \emph{Schnorr random} if
 there is no Schnorr test
 succeeding on $X$.

 \end{definition}
 \begin{definition}[\cite{brendle2015analogy}]
We say $A$ \emph{Schnorr cover }
a set $\mcal{A}\subseteq 2^\omega$ iff there exists
an $A$-Schnorr test, namely $\mbV =(V_0,V_1,\cdots)$,
such that $\mbV$
succeeds on every $X\in \mcal{A}$.

 \end{definition}

Our main result in this section is the following which
answers question 4.1-(8) of \cite{brendle2015analogy}.
\begin{theorem}\label{schth2}
There exists a $G$ such that:
\begin{enumerate}
\item $G$ is weakly meager covering;
\item
$G$ does not compute any Schnorr random real;

\item $G$ does not Schnorr cover REC.
\end{enumerate}
\end{theorem}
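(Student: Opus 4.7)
The plan is a bushy-tree forcing in Baire space. A condition is a pair $(\sigma, T)$ with $\sigma \in T \subseteq \omega^{<\omega}$, where $T$ is $p$-bushy above $\sigma$ for an order function $p$ which becomes small as the construction proceeds; the generic $G$ is the limit of the stems. I split the requirements into one positive family for clause (1) and two negative families, indexed by Turing functionals $\Psi$, for clauses (2) and (3). The positive requirement is met by stem extension only and so does not disturb bushiness; the two negative requirements are met by the standard splitting/non-splitting dichotomy, using Lemma \ref{schlem4} to control bushiness loss.

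For clause (1), I reserve, for each index $e$, a computable infinite set of coordinates of $G$ in which to encode longer and longer initial segments of the $e$-th computable $X_e \in 2^\omega$ (when it exists). Then the $\Pi^{0,G}_1$ class $Q_e$ demanding that $X$ agrees with each segment coded by $G$ is nowhere dense (segments grow) and contains $X_e$, so the uniform sequence $(Q_e)$ witnesses that $G$ is weakly meager covering. The stem is extended without thinning $T$, so bushiness is preserved.

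For clause (2), given $\Psi$, the dichotomy is: either there is an appropriately bushy subtree on which $\Psi^G$ can be forced partial, or $\Psi$ clusters on such a subtree $T'$, letting me enumerate, at each level $n$, a finite set $V_n \subseteq 2^{<\omega}$ containing an initial segment of $\Psi^\tau$ for every relevant $\tau \in T'$. Controlling the clustering rate, I arrange that each $V_n$ is uniformly computable in $n$ with $\m(V_n) \leq 2^{-n}$, so $\mbV = (V_n)$ is a Schnorr test succeeding on $\Psi^G$. For clause (3), I view $\Psi$ as producing a putative Schnorr test $\mbV^{\Psi^G}$. Either a bushy subtree refutes validity of the test (wrong measure bound or divergence), in which case thinning suffices, or $\mbV^{\Psi^\sigma}$ is valid to the current level for every $\sigma$ in a bushy subtree. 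In the latter case, uniform summability $\sum_m \m(V_m) \leq 2$ lets me locate a computable $X$ in $\bigcap_\sigma \bigl(2^\omega \setminus \bigcup_{m \geq N} [V_m^{\Psi^\sigma}]\bigr)$, eventually avoiding every candidate test.

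The main obstacle is clause (2): a Schnorr test demands that $V_n$ be uniformly computable in $n$ as a canonically-indexed finite set, which is stricter than the condition for a Kurtz test used in \cite{khan2017forcing}. The non-splitting analysis must therefore be quantitative: the clustering rate on the bushy subtree must itself be computable so that the enumeration of $V_n$ can be carried out uniformly from $n$. Balancing this quantitative control against the bushiness needed for the remaining requirements is the combinatorial heart of the argument and accounts for the ``different combinatorial aspects'' flagged in the introduction. A secondary subtlety in clause (3) is that $\mbV^{\Psi^G}$ depends on $G$, so the computable $X$ must be insensitive to which extension of $\sigma$ within the bushy subtree is eventually chosen.
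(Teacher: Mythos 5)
Your proposal diverges from the paper most sharply on clause (1), and there is a genuine gap there. The paper does not encode at all: it invokes Theorem \ref{schth0} (weakly meager covering $\Leftrightarrow$ high or of $\DNR$ degree), starts the forcing on a $\DNR_{\t p + 2}$ tree, and so gets clause (1) for free. Your direct encoding runs into a decoding problem: for $Q_e$ to be $\Pi^{0,G}_1$ you need $G$ to recover, effectively in $G$ alone, which bit of $X_e$ was written at each reserved coordinate. But the value written there is some element of the finite branching set that the \emph{current} condition $(\sigma_s, T_s)$ offers, and the trees $T_s$ are pruned in a highly non-$G$-computable way by the later negative requirements. There is no $G$-effective map from $G(n_k)$ back to a bit unless the branching sets are fixed in advance, which the bushy-tree pruning precludes. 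You would also have to arrange that $Q_e$ is nowhere dense even when $X_e$ is partial and the encoding stalls. All of this evaporates once you adopt the $\DNR$ characterization.

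The treatment of clauses (2) and (3) follows the paper's general shape but glosses over the two combinatorial lemmas that carry the weight. For clause (3), your plan ``locate a computable $X$ in $\bigcap_\sigma (2^\omega \setminus \bigcup_{m\geq N}[V_m^{\Psi^\sigma}])$'' does not obviously produce a nonempty set: even though each $\bigcup_{m\geq N}[V_m^{\Psi^\sigma}]$ is small, the union over the (exponentially many) relevant $\sigma$ on the bushy tree can have full measure. You flag this as a ``secondary subtlety,'' but it is the whole difficulty, and the paper's Lemma \ref{schlem1} spends several pages on it: $X$ is built bit by bit alongside the tree, with a two-variable Markov-inequality averaging (over $\rho \in 2^{m_{n+1}}$ and $\tau$ in the tree) and exactly-bushy intermediate trees to ensure the surviving subtree stays bushy at every level. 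For clause (2), the clustering claim needs the $(k,\delta)$-hash collection of Lemma \ref{codinglem2}: one must select $V_0,\dots,V_{k-1}$ from a large family so that the sets $S_{V_j}$ have identical projections below a given level, otherwise the intersection $\bigcap_j S_{V_j}$ can be killed at low levels. Your sketch names the bushiness-vs-quantitative-control tension correctly, but the hash mechanism is the missing idea that resolves it.
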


The rest of this section will prove Theorem \ref{schth2}.
We firstly note that by results in
\cite{brendle2015analogy}\cite{Rupprecht2010}\cite{Kjos-Hanssen2011Kolmogorov},
 weakly meager covering is characterized  as following.
\begin{theorem}\label{schth0}
A set $X$ is weakly meager covering if and only if
it is high or of $\DNR$ degree.
\end{theorem}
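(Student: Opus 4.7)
I would prove the biconditional by treating each direction separately, drawing on the results from the three cited papers.

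For the ($\Leftarrow$) direction I handle the two cases. If $X$ is high, fix by Martin's theorem a function $f\leq_T X$ dominating every total computable function. Following Rupprecht, define uniformly $\Pi^{0,X}_1$ classes $Q_{e,c}$ consisting of reals that agree with $\phi_e$ at every input $n\in[c,\infty)$ where $\phi_e(n)$ converges within $f(n)$ steps. The complement of each $Q_{e,c}$ is dense because $f$ dominates the use of every total $\phi_e$, so from any $\sigma$ one can always extend by a ``wrong'' bit at a witnessed convergence; every computable $Y=\phi_{e_0}$ lies in $Q_{e_0,c}$ once $c$ is past the level at which $f$ begins to dominate the modulus of $\phi_{e_0}$, so $\mathrm{REC}\subseteq\bigcup_{e,c}Q_{e,c}$. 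If $X$ is of DNR degree, I invoke the Kjos-Hanssen-Merkle-Stephan characterization from \cite{Kjos-Hanssen2011Kolmogorov}: $X$ computes a function $g$ whose initial segments have unbounded Kolmogorov complexity at a linear rate, and since every computable real has $C(Y\uhr n)\leq^+\log n$, a standard translation converts $g$ into a uniformly $\Pi^{0,X}_1$ nowhere-dense cover of $\mathrm{REC}$.

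For the ($\Rightarrow$) direction I argue by contrapositive. Assume $X$ is neither high nor of DNR degree, and fix any uniformly $\Pi^{0,X}_1$ sequence $(Q_m)$ of nowhere-dense classes with $Q_m=[T_m]$ for uniformly $X$-computable pruned trees $T_m$; I produce a single computable $Y\in 2^\omega$ escaping $\bigcup_m Q_m$. Non-highness of $X$ forces the $X$-computable function recording, at each level, how far one must extend to leave $T_m$, to be exceeded infinitely often by some total computable function, yielding computable ``escape bounds'' at infinitely many stages. Non-DNR-ness of $X$, via the Kjos-Hanssen complexity criterion, forces every $X$-computable sequence to agree with some computable trace on infinitely many initial segments, so the bushy-tree structure underlying each $T_m$ can be computably predicted on a cofinal set of levels. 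Weaving the two features together, I build $Y$ level by level: at stage $m$, use the computably-predicted data to select an extension of the current initial segment that diagonalizes against $T_0,\dots,T_m$.

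The main obstacle is the ($\Rightarrow$) direction, because neither non-highness nor non-DNR-ness suffices alone: every high set is weakly meager covering, and so is every DNR degree. Both hypotheses must therefore be invoked in an essential way, and the delicate combinatorial step is assembling the computable diagonalizer from the two deficits simultaneously. This synthesis is the content of Rupprecht's analysis in \cite{Rupprecht2010}, with the DNR half supplied by the complexity characterization of \cite{Kjos-Hanssen2011Kolmogorov} and the resulting dichotomy packaged in \cite{brendle2015analogy}.
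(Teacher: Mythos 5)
The paper offers no proof of Theorem \ref{schth0}: it is imported wholesale from \cite{brendle2015analogy}, \cite{Rupprecht2010} and \cite{Kjos-Hanssen2011Kolmogorov}, so there is nothing internal to compare your argument against; what can be assessed is whether your sketch would actually assemble into a proof. Your ($\Leftarrow$) direction for the high case is the standard Martin-domination argument and is fine once you make $Q_{e,c}$ empty whenever $\phi_e(n)[f(n)]\uparrow$ for some $n\geq c$ (as written, for a $\phi_e$ with finite domain the class of reals ``agreeing at every witnessed convergence'' can be clopen and nonempty, hence not nowhere dense). Your DNR case compresses the real work into ``a standard translation'': the actual route is that a function of DNR degree computes a function that is infinitely often equal to every computable function with a suitable computable bound, and it is this, via the Bartoszy\'nski-style combinatorial characterization of covering a class by a meager set, that yields the uniformly $\Pi^{0,X}_1$ nowhere dense cover of REC. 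That step is the substance of the Kjos-Hanssen--Merkle--Stephan machinery, not a routine translation from a complexity statement.

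The genuine gap is in ($\Rightarrow$). The level-by-level construction of a single computable $Y$ escaping $\bigcup_m Q_m$ does not work as described. Non-highness of $X$ gives, for each individual $X$-computable function, \emph{some} total computable function escaping domination infinitely often, but the witness depends on the function, so you cannot extract uniform ``computable escape bounds'' valid for all the trees $T_0,\dots,T_m$ simultaneously at a single stage; and ``the bushy-tree structure can be computably predicted on a cofinal set of levels'' is not a consequence of non-DNR-ness in any form you have stated. The correct argument does not build a diagonalizer directly: one first shows that weak meager covering of REC is equivalent to $X$ computing an appropriately bounded infinitely-often-equal function, and then that computing such a function forces $X$ to be high or of DNR degree. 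That equivalence is precisely Rupprecht's theorem, which your last paragraph simply cites. Since the paper also only cites it, deferring is acceptable, but the intermediate ``weaving'' construction should be deleted or replaced by the infinitely-often-equal characterization, as it would not survive as a proof.
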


By Theorem \ref{schth0}, to prove Theorem
\ref{schth2}
it suffices to construct a $G\in \DNR$ that satisfies
item (2)(3).
The proof follows the general steps as in
Mathias forcing (in computability theory),
or forcing on $\Pi_1^0$ class.
The condition is an effective closed set of some cantor space
which is seen as a collection of candidates of the $G$
we are constructing.
More specifically, it is a bushy tree who has very `few' leaves.
 We show how to extend a condition to
force a given requirement.
We deal with the requirements
of item (3), item (2) in section \ref{schsec0}
and section \ref{schsec1} respectively.

In this section, fix a computable monochromatically decreasing
positive function
$\varepsilon:\omega\rightarrow\mathbb{Q}$
  such that $\sum_{m\in\omega}\varepsilon(m)<1/4$.
\begin{definition}
For functions $q,p: \omega\rightarrow\mathbb{Q}$,
we say $(p,q)$ \emph{\allowsplit s}\
if for every $k\in\omega$,
$p(x)\varepsilon^k(x)>q(x)\geq 1 $ for all but finitely many $x$.
\end{definition}

A \emph{condition}
is a tuple $(\eta,T,p,q)$ such that
\begin{enumerate}
\item The tree $T$ is a computably bounded computable
tree (in $\omega^{<\omega}$) with $\eta$
as its stem;

\item  The functions $p,q$ are  computable function from
$\omega$ to $\mathbb{Q}$ such that $(p,q)$ \allowsplit\
and $p(x)>>q(x)$ for all $x\geq |\eta|$
(where $y>>x$ means $y$ is sufficiently larger
than $x$);

\item
The tree $T$ is $p$-bushy over $\eta$ and $\ell(T)$
is $q$-small over every $\rho$ such that $[\rho]\cap [T]\ne\emptyset$.
\end{enumerate}

As usual, a condition $(\eta,T,p,q)$ is seen as a collection,
namely $[T]$, of
the candidates of the $G$ we construct.
A condition $(\h{\eta},\h{T},\h{p},\h{q})$ \emph{extends} a condition
$(\eta,T,p,q)$
(written as $(\h{\eta},\h{T},\h{p},\h{q})\subseteq (\eta,T,p,q)$) if $[\h{T}]\subseteq [T]$.

An example of condition is the following.
Let $\t{p}:\omega\rightarrow\omega$ be sufficiently large
that $(\t{p},2)$ \allowsplit\ (where $2$ denote the constant function $2$),
let $\t{T}\subseteq (\t{p}+2)^{<\omega}$ be such a computable tree that
$[\t{T}] = \DNR_{\t{p}+2}$, $\t{T}$ is $\t{p}$-bushy over $\bot$
and $\ell(\t{T})$ is $2$-small over each
$\rho$ such that $[\rho]\cap [\t{T}]\ne\emptyset$ (see Lemma \ref{schlem3}).
Clearly $(\bot, \t{T},\t{p},2)$ is a condition. This will be our initial condition.

We need to satisfy the following two kinds
of requirements:
\begin{align}\nonumber
\mcal{R}_\Psi:&
\Psi^G\text{ as a Schnorr test  does not cover
REC};
\\ \nonumber
\mcal{R}'_{\Psi}:&
\Psi^G\text{ is not a Schnorr random real}.
\end{align}
A condition $(\eta,T,p,q)$ \emph{forces} a requirement
$\mcal{R}$ if every $X\in [T]$ satisfies $\mcal{R}$.
As usual, it suffices to show that every condition admit an
extension forcing a given requirement
since this enable us to construct a sequence of conditions
$$d_0=(\bot, \t{T},\t{p},2)\supseteq d_1\supseteq \cdots$$ so that every
requirement is forced by some $d_t$. Then
let $G\in \bigcap_t d_t$, which exists by compactness,
we have that $G$ satisfies all requirements.
Since $G\in d_0$, $G$ is a $\DNR$, thus we are done.

\subsection{Avoid Schnorr covering REC}\label{schsec0}
In this subsection we deal with requirement $\mcal{R}_\Psi$.
The final goal is Lemma \ref{schlem6} where the major
technique lies in Lemma \ref{schlem1}.

We will frequently use the following version of Markov inequality.
For a random variable $x$,
we write
$x\sim P$ to denote that $x$ follows the probability measure $P$;
we write $x|y\sim P$ to denote that
conditional on $y$, $x$ follows $P$.
For a finite set $S$, we use $U(S)$ to denote the uniform probability
measure on $S$.
\begin{lemma}\label{schlem12}
Let $S$ be a finite set
and let $f$ be a positive function on $S$.
If $\mbE_{x\sim \U(S)}[f(x)]<\lambda$,
then for every $\h{\lambda} >0$,
there exists a subset $S^*$ of $S$ such that
$|S^*|/|S|>
1-\lambda/\h{\lambda}$
and $f(x)<\h{\lambda}$
for all $x\in S^*$.
\end{lemma}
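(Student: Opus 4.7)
The plan is to take $S^*$ to be the largest set for which the conclusion can possibly hold, namely
\[
S^* \;=\; \{x\in S : f(x) < \hat{\lambda}\},
\]
and then bound $|S\setminus S^*|$ by a direct Markov-style argument. By construction, the second part of the conclusion, $f(x)<\hat{\lambda}$ for all $x\in S^*$, is automatic, so the entire task reduces to showing that $|S\setminus S^*|/|S| < \lambda/\hat{\lambda}$.

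For this, I would split the expectation into two sums according to whether $x\in S^*$ or $x\in S\setminus S^*$. Since $f$ is positive, discarding the contribution from $S^*$ only decreases the sum, and on $S\setminus S^*$ we have $f(x) \geq \hat{\lambda}$ by definition. This gives
\[
\lambda \;>\; \mbE_{x\sim \U(S)}[f(x)] \;=\; \frac{1}{|S|}\sum_{x\in S} f(x) \;\geq\; \frac{1}{|S|}\sum_{x\in S\setminus S^*} f(x) \;\geq\; \hat{\lambda}\cdot\frac{|S\setminus S^*|}{|S|}.
\]
Dividing by $\hat{\lambda}$ yields $|S\setminus S^*|/|S| < \lambda/\hat{\lambda}$, hence $|S^*|/|S| > 1 - \lambda/\hat{\lambda}$, as required.

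There is no real obstacle here; the statement is just the uniform-measure reformulation of classical Markov's inequality, packaged so it can be invoked later when a random choice over a finite combinatorial object (for instance, leaves of a bushy subtree) needs to be turned into the existence of a large deterministic sub-object on which some quantity stays small. The only thing to be slightly careful about is that the inequality on $\mbE[f]$ is strict, which is inherited directly by the final inequality and so poses no issue.
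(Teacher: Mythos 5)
Your proof is correct, and it is exactly the standard derivation of Markov's inequality; the paper in fact states Lemma \ref{schlem12} without proof, implicitly relying on this same argument. Your choice $S^* = \{x\in S : f(x) < \hat{\lambda}\}$, the split of the expectation, and the lower bound $f \geq \hat{\lambda}$ on the complement are all the canonical steps, and your observation that the strict inequality on the expectation propagates cleanly to the final conclusion is accurate.
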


We frequently need to
pruned the tree so that for
some  $l\in\omega$, level $l$ is shrunk  into
a subset $S^*$ where the proportion of $S^*$
on that level is close to $1$.
To make sure that the tree is not severely
pruned, we work on exactly bushy tree
defined as following
(see Lemma \ref{schlem0}).

\begin{definition}
For a function $p:\omega\rightarrow \mathbb{Q}$,
a tree $T$ is \emph{exactly $p$-bushy }
over $\rho$
if
for every $\tau\in (T\cap [\rho]^\preceq)\cup \{\rho\}$ that is not a leaf
of $T$,
$\tau$ has
 $p(|\tau|)^+$ many immediate successor in $T$
 where $x^+$ denote the smallest integer $y$ such that
 $y\geq x$.
A tree $T$ is \emph{exactly $p$-bushy} from level $n$ to level $m$ if for every
$\tau\in T$ that is not a leaf, if $n\leq |\tau|\leq m$,
then $\tau$ has    $p(|\tau|)^+$ many immediate successor in $T$.
A tree $T$ is \emph{exactly $p$-bushy } above level $m$ if it is exactly $p$-bushy from level
$m$ to level $\infty$.
A finite set $S$ is \emph{exactly $p$-big} over $\rho$ if there
exists a finite tree $T$ that is exactly $p$-bushy over $\rho$
such that $\ell(T)= S$.

\end{definition}

\begin{lemma}\label{schlem0}
Let $\lambda,\varepsilon_0,\varepsilon_1,\cdots,\varepsilon_{n-1}>0$
satisfy $\lambda>\sum_{m< n}\varepsilon_m$.
Let $T\subseteq \omega^{\leq n}$ be a finite,
 exactly $p$-bushy (over empty string)
 tree with $\ell(T)\subseteq \omega^n$.
 Let $S\subseteq \ell(T)$
satisfies
$|S|/|\ell(T)|>\lambda$.
Then there exists a
 $\h{p}$-bushy (over empty string) subtree
$\h{T}$ of $T$ such that
$\ell(\h{T})\subseteq S$ where
$\h{p}(m) = p(m)\varepsilon_m$ for all $m<n$.
\end{lemma}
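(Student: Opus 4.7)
The plan is to prune $T$ top-down by marking each $\tau\in T$ as \emph{good} or \emph{bad}: a node at level $n$ is good iff it lies in $S$, and an internal node $\tau$ at level $m<n$ is good iff at least $p(m)\varepsilon_m$ of its immediate successors in $T$ are good. Let $\hat T$ be the set of good nodes. If the root $\bot$ turns out to be good, then $\hat T$ is, by construction, a $\hat p$-bushy subtree of $T$ rooted at $\bot$ with $\ell(\hat T)\subseteq S$, so everything reduces to showing that $\bot$ is good.

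To control badness I would use a measure-theoretic weight: for each $\tau\in T$, let $r(\tau)$ denote the fraction of level-$n$ descendants of $\tau$ in $T$ that lie outside $S$. Because $T$ is exactly $p$-bushy with $\ell(T)\subseteq \omega^n$, every child of a level-$m$ node has the same number of level-$n$ descendants, so $r(\tau)$ is exactly the average of $r(\tau')$ over the $p(m)^+$ children of $\tau$. In particular $r(\bot)=1-|S|/|\ell(T)|<1-\lambda$. The key step is a downward induction on $m$ proving that \emph{every bad $\tau$ at level $m$ satisfies $r(\tau)\geq \prod_{k=m}^{n-1}(1-\varepsilon_k)$}. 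The base case $m=n$ is immediate since then $\tau\notin S$ forces $r(\tau)=1$. For the inductive step, a bad internal $\tau$ has fewer than $p(m)\varepsilon_m$ good children, hence more than $p(m)^+ - p(m)\varepsilon_m$ bad children; since $p(m)\leq p(m)^+$, the fraction of bad children exceeds $1-\varepsilon_m$. Averaging $r(\tau')$ over children, using the inductive lower bound on each bad child and the trivial bound $0$ on the good ones, yields $r(\tau)>(1-\varepsilon_m)\prod_{k=m+1}^{n-1}(1-\varepsilon_k)$.

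Applying the claim at $\bot$: if $\bot$ were bad, then by the Weierstrass product inequality
\[
r(\bot)\;\geq\;\prod_{k=0}^{n-1}(1-\varepsilon_k)\;\geq\;1-\sum_{k<n}\varepsilon_k\;>\;1-\lambda,
\]
contradicting $r(\bot)<1-\lambda$. So $\bot$ is good and $\hat T$ is the required $\hat p$-bushy subtree. The only mildly delicate point is the interaction between the integer count of good children and the (possibly non-integer) threshold $p(m)\varepsilon_m$, but the inequality $p(m)\varepsilon_m/p(m)^+\leq \varepsilon_m$ absorbs the rounding cleanly, so no extra slack needs to be budgeted beyond what the hypothesis $\lambda>\sum_{m<n}\varepsilon_m$ already provides.
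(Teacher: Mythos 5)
Your argument is correct, and it takes a noticeably different route from the paper's. The paper proves the lemma by top-down induction on the height $n$: it applies the Markov inequality (Lemma~\ref{schlem12}) at the root to select at least $\varepsilon_0\,|T\cap\omega^1|$ children above which the density of $S$ stays above $\lambda-\varepsilon_0$, and then recurses over each such child with the reduced budget $\lambda-\varepsilon_0$, peeling off one $\varepsilon_m$ per level. You instead mark nodes good or bad by a bottom-up rule and run a single downward induction proving that every bad $\tau$ at level $m$ has residual bad-density $r(\tau)\geq\prod_{k=m}^{n-1}(1-\varepsilon_k)$, closing with the Weierstrass inequality $\prod_{k<n}(1-\varepsilon_k)\geq 1-\sum_{k<n}\varepsilon_k$ to recover the additive hypothesis on $\lambda$. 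The two constructions produce essentially the same subtree (the Markov step at level $m$ is selecting exactly the children you would call good, for a suitable threshold), but your packaging avoids the explicit recursion and the repeated reparametrization of $\lambda$, and concentrates all the bookkeeping into one clean potential argument — arguably the cleaner write-up. One small point to tighten: ``the set of good nodes'' is not automatically prefix-closed, so $\hat T$ should be defined as the set of $\tau\in T$ all of whose initial segments (including $\tau$ itself) are good; once $\bot$ is shown good this set is the $\hat p$-bushy tree you want, and the rest of your verification, including $\ell(\hat T)\subseteq S$, goes through verbatim.
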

\begin{proof}
We prove by induction on $n$. The conclusion holds trivially
for $n=1$.
Suppose it holds for $n-1$.
Since for every $\alpha\in T\cap \omega^1$,
$|\ell(T)\cap [\alpha]^\preceq|$ is identical,
therefore
$$\dfrac{|S|}{|\ell(T)|} =
\mathbb{E}_{\alpha\sim U(T\cap \omega^1)}
\big[\frac{|S\cap [\alpha]^\preceq|}
{|\ell(T)\cap [\alpha]^\preceq|}\big].$$
Let $$f(\alpha) = 1-\frac{|S\cap [\alpha]^\preceq|}
{|\ell(T)\cap [\alpha]^\preceq|},$$ we have
$$\mathbb{E}_{\alpha\sim \U(T\cap \omega^1)}[f(\alpha)]<1-\lambda.$$
Let $\h{\lambda} = \frac{1-\lambda}{1-\varepsilon_0}$
in the \Markov\ inequality \ref{schlem12}, we have that there exists a subset
$S_0$ of $T\cap \omega^{1}$ with
$$|S_0|>(1-(1-\lambda)\big/\frac{1-\lambda}{1-\varepsilon_0})|T\cap \omega^1|
=\varepsilon_0|T\cap \omega^1|
\geq \varepsilon_0p(0)$$ such that
for every $\alpha\in S_0$,
$$
\frac{|S\cap [\alpha]^\preceq|}
{|\ell(T)\cap [\alpha]^\preceq|}
=1-f(\alpha)>1- \frac{1-\lambda}{1-\varepsilon_0}> \lambda-\varepsilon_0.
$$
Thus, by induction (where $\lambda$
is substituted by $\lambda-\varepsilon_0$), there exists,
for each $\alpha\in S_0$,
a subtree $T_\alpha$ of $T$ with
$\alpha$ as its stem such that
$T_\alpha$ is  $\h{p}$-bushy over $\alpha$
and $\ell(T_\alpha)\subseteq S$.
Thus $\bigcup_{\alpha\in S_0} T_\alpha$
is the desired tree $\h{T}$.

\end{proof}

Before the core Lemma \ref{schlem1},  we introduce some terminology.
For a finite set $V\subseteq 2^{<\omega}$,
we write $\m(V)$ for $\m([V])$ where $\m$ is the Lebesgue
measure on $2^\omega$; and we write
$\m(V|V')$ for $\m([V]\cap [V'])/\m ([V'])$.
For every Turing functional $\Psi$,
every oracle $Y$, we assume
that $\Psi^Y$ is computing a Schnorr test,
namely $V_n = \Psi^Y(n)$
such that
$$\m(\cup_n \Psi^Y(n)) =\lambda^*$$
whenever $\Psi^Y$ is total
and $\lambda^* $ is sufficiently small.
Moreover, $$\m(\cup_{m\leq n}\Psi^Y(m))>\lambda^*-2^{-n-1}$$
if $\Psi^Y(m)\downarrow$ for all $m\leq n$.

We write
\begin{align}
\nonumber
&\Psi^Y[t]\text{ for }\bigcup\limits_{m\leq t,\Psi^Y(m)[t]\downarrow}
\Psi^Y(m);
\\ \nonumber
&\Psi^Y(t_0,t_1]
\text{ for }
\Psi^Y[t_1]\setminus \Psi^Y[t_0].
\end{align}
Whenever we write $\Psi^\sigma[t]$, it implies
$|\sigma|>t$. i.e., for every
$\tau\succeq\sigma$, $\Psi^\tau[t] = \Psi^\sigma[t]$.

In the following text of this subsection,
let $\t{T}$ be a computably bounded
computable tree with
$\eta$ as its stem,
let $$T = \{\rho\in \t{T}:
[\rho]\cap [\t{T}]\ne\emptyset\}.$$
Suppose
$\t{T}$ is $p$-bushy over $\eta$ and
$\ell(\t{T})$ is $q$-small over every $\rho\in T$
where $p,q$ are
computable function.
The following Lemma is the core argument.

\begin{lemma}\label{schlem1}
 Suppose
 $q<p\varepsilon^3/8$ 
 and for every $Y\in [\t{T}]$,
$\Psi^Y$ is total.
Then there exists a computable real $X\in 2^\omega$,
a computable tree $\h{T}\subseteq \t{T}$
with $\eta$ as its
stem   such that
\begin{enumerate}
\item $\h{T}$ is $p\varepsilon^3/8$-bushy
over $\eta$
and $\ell(\h{T})$ is $q$-small over
every $\rho$ such that $[\rho]\cap [\h{T}]\ne\emptyset$;

\item For every $Y\in [\h{T}]$,
$X\notin \bigcup_{m\in\omega}[\Psi^Y(m)]$.
\end{enumerate}

\end{lemma}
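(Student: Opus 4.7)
The plan is to build $X$ and $\hat T$ together via a stagewise approximation that maintains a strong invariant linking the initial segments of $X$ to the ``so far approved'' levels of $\hat T$. Let $X=\lim_s X_s$ be an increasing chain of finite initial segments, and choose a computable sequence of levels $l_0<l_1<\cdots$ in $\tilde T$ together with times $t_0<t_1<\cdots$ so that, for every $\sigma\in\tilde T\cap\omega^{l_s}$, $\Psi^\sigma[t_s]$ is defined and has Lebesgue measure at least $\lambda^*-2^{-(s+1)}$; the uniform computability of $(l_s,t_s)$ follows from compactness of $[\tilde T]$ together with the hypothesis that $\Psi^Y$ is total on $[\tilde T]$. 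In parallel I build a nested sequence of finite bushy subtrees $\hat T^{(s)}$ of $\tilde T\cap\omega^{\le l_s}$, each extending the previous, and let $\hat T$ be the infinite tree they determine in the limit. The invariant I maintain is: for every leaf $\sigma$ of $\hat T^{(s)}$ at level $l_s$, $[X_s]\cap\Psi^\sigma[t_s]=\emptyset$. This yields item (2) in the limit: if $Y\in[\hat T]$ and $\sigma_s=Y\uhr l_s$, then $X\succeq X_s$ forces $X\notin\Psi^{\sigma_s}[t_s]$, and $\bigcup_s\Psi^{\sigma_s}[t_s]\supseteq\bigcup_m[\Psi^Y(m)]$ by totality of $\Psi^Y$.

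The inductive step $s\to s+1$ is the heart of the argument. For any $\sigma\in\omega^{l_{s+1}}$ extending a leaf $\sigma'$ of $\hat T^{(s)}$, the Schnorr-measure bound gives $\mu(\Psi^\sigma[t_{s+1}]\setminus\Psi^{\sigma'}[t_s])\le 2^{-(s+1)}$, so the only ``new'' obstacles above $\sigma$ have absolute Lebesgue measure at most $2^{-(s+1)}$. Choosing $N_{s+1}$ large enough to resolve all their cylinders, I consider a uniformly random extension $X_{s+1}\succeq X_s$ of length $N_{s+1}$; by Lemma \ref{schlem12} applied to the uniform distribution on such extensions, the expected fraction of $\sigma\in\omega^{l_{s+1}}$ captured by $\Psi^\sigma[t_{s+1}]$ is bounded above by $2^{-(s+1)}/\mu([X_s])=2^{|X_s|-s-1}$. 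Calibrating $|X_s|$ to grow sub-linearly in $s$ makes this at most any prescribed small $\varepsilon(l_{s+1})^3/8$, and a computable search over extensions of $X_s$ then locates a concrete $X_{s+1}$ for which the fraction of bad $\sigma$ is at most that bound. Write $G_{s+1}$ for the resulting good subset of $\omega^{l_{s+1}}$.

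I extend $\hat T^{(s)}$ to $\hat T^{(s+1)}$ within $\tilde T\cap\omega^{\le l_{s+1}}$ by a per-leaf bushy extraction. A second application of Lemma \ref{schlem12}, this time across the leaves $\sigma'$ of $\hat T^{(s)}$, restricts to a majority of stage-$s$ leaves above each of which the fraction of good $\sigma$ is close to $1$; then an application of Lemma \ref{schlem0} inside each such $[\sigma']^\preceq$ extracts an exactly-bushy layer between $l_s$ and $l_{s+1}$ whose level-$l_{s+1}$ leaves lie in $G_{s+1}$. The three averaging/extraction steps each contribute a factor of $\varepsilon$ (and together a constant $1/8$ absorbed from the $\hat\lambda$-choices), giving cumulative bushiness $p\varepsilon^3/8$ as required. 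The leaves of the infinite $\hat T$ are precisely the $\tilde T$-leaves that survived all stages (no new leaves are introduced, since every intermediate-level node is eventually extended), so the $q$-smallness of $\ell(\hat T)$ over every $\rho$ with $[\rho]\cap[\hat T]\ne\emptyset$ is inherited directly from the hypothesis that $\ell(\tilde T)$ is $q$-small over every $\rho\in T$, using the assumption $q<p\varepsilon^3/8$ and Lemma \ref{schlem4} to absorb the cumulative pruning.

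The hard part is the quantitative balance between the growth $|X_s|\to\infty$ needed for $X\in 2^\omega$ and the geometric decay $2^{-(s+1)}$ of newly-committed test measure. The crucial Markov estimate $2^{|X_s|-s-1}$ for the bad fraction is only useful when $|X_s|$ grows strictly slower than $s$; calibrating $|X_s|\approx s/2$ and verifying that this is simultaneously compatible with the three-fold $\varepsilon$-budget required by Lemma \ref{schlem0} and the two Markov steps is the main technical work. The explicit hypothesis $q<p\varepsilon^3/8$ is exactly what is needed to survive this cumulative bushiness loss while keeping item (1).
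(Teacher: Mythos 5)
The framework you describe—building $X$ and $\hat T$ stagewise, maintaining disjointness of $[X_s]$ from the test approximation on the current leaves, and recovering bushiness via the two Markov applications plus Lemma \ref{schlem0}—is morally the same architecture the paper uses, and the way you convert the invariant into item (2) is correct. But there is a genuine quantitative gap, and it is precisely the place you flag as ``the hard part,'' so the calibration you propose does not close it.

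The issue is that the convergence rate $\overline{\lambda}_s = \lambda^* - \lambda_s$ cannot be a fixed function of $s$ such as $2^{-(s+1)}$. When you prune the bad leaves at level $l_{s+1}$, you must guarantee that \emph{no node at or below level $l_{s-1}$ is lost}; otherwise, since pruning occurs at every stage, every node is eventually killed and $\hat T$ is empty. Since the bad leaves may, in the worst case, all cluster above a single node $\alpha$ at level $l_{s-1}$, the bad fraction $\delta_{s+1}$ must satisfy $\delta_{s+1} < c/|\tilde T \cap \omega^{\le l_{s-1}}|$ for some constant $c$, and the right-hand side can be astronomically small—it depends on the size of the tree you happen to have built, not merely on $\varepsilon$ or $s$. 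Your bound $\delta_{s+1} \approx 2^{|X_s| - s - 1}$ with $|X_s| \approx s/2$ gives only $2^{-s/2}$, which has no relation to $|\tilde T \cap \omega^{\le l_{s-1}}|$. In particular, the target ``$\varepsilon(l_{s+1})^3/8$'' you aim for is the wrong threshold: that is the budget for losing \emph{successors} at each level, not the much tighter budget for losing \emph{ancestors} below the frozen portion.

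The paper resolves this by inverting the order of quantification at each stage: it first fixes $\rho_n$ (hence $m_n = |\rho_n|$) and the already-built tree, and only then waits for a time $t_{n+1}$ so large that $\overline{\lambda}_{n+1} < o_s(\tilde T \cap \omega^{\le l_n}, m_n, \overline{\lambda}_n)$—that is, the tail measure is made small \emph{as a function of the tree width and the length of the current $X$-prefix}, quantities that are known at that point. Your construction decides $l_s$ and $t_s$ up front according to the fixed schedule $2^{-(s+1)}$, which makes the bad fraction depend on quantities ($|\tilde T \cap \omega^{\le l_{s-1}}|$) that you cannot control and that may dwarf any fixed exponential. The fix is not a cleverer choice of $|X_s|$; it is to make the wait-time $t_{s+1}$ adaptive in the way the paper does, and then the Markov estimate $\overline{\lambda}_s / \mu([X_s])$ can be driven below $1/|\tilde T \cap \omega^{\le l_{s-1}}|$ before the pruning step. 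As written, your argument breaks at the ``second application of Lemma \ref{schlem12}'': the majority you obtain need not be distributed evenly enough over level $l_{s-1}$ to preserve the stem of $\hat T^{(s)}$, and there is nothing in the fixed-rate setup to rule out that some $\alpha$ at that level loses all of its descendants.
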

\begin{remark}
We will inductively define a sequence of trees $T_n$ and
a sequence of strings $\rho_n\in 2^{<\omega}$ so that
$$X=\lim\limits_{n\rightarrow\infty }\rho_n
\text{ and }
\{\rho\in \h{T}:[\rho]\cap [\h{T}]\ne\emptyset\}=
\bigcap\limits_{n\in\omega}\bigcup\limits_{m>n} T_m.$$
To this end, we maintain that
$$\m(\cup_{n'}\Psi^Y(n')|\rho_n)
\text{ is small for all }n
\text{ and }Y\in [\h{T}].
$$

Suppose we have found $\rho_n, T_{n+1}$
at time $t_{n+1}$ so that
for every $\sigma\in \ell(T_{n+1})$, $\m(\Psi^\sigma[t_{n+1}]|\rho_n)$
is small.
In order to find the next $\rho_{n+1}$,
wait for a time $t_{n+2}$ so that for some level $l_{n+2}$, for every
$\tau\in T[t_{n+2}]\cap \omega^{l_{n+2}}$,
$\m(\Psi^{\tau}[t_{n+2}])$ is sufficiently close to
$\lambda^*$, as to how close depends on
whatever constructed by step $n$, namely $T_{n+1},\rho_n$ etc.
To find the next $\rho_{n+1}\in 2^{m_{n+1}}$,
\begin{align}\nonumber
&\text{ we take an average of }\m(\Psi^{\tau}[t_{n+2}]|\rho)
\text{ over }\\ \nonumber
&\rho\in [\rho_n]^\preceq\cap 2^{m_{n+1}}
\text{ and }\tau\in T[t_{n+2}]\cap \omega^{l_{n+2}}.
\end{align}
We argue that
we can pruned the tree  so that  the average on that tree's leaves
(and a subset of $[\rho_n]^\preceq\cap 2^{m_{n+1}}$)
 can be much smaller than $\m(\Psi^\sigma[t_{n+1}]|\rho_n)$.
This is done by showing that for many $\rho$ and many $\tau$,
$\rho$ is not a member in $\Psi^{\tau}[t_{n+1}]$ since
$\m(\Psi^\sigma[t_{n+1}]|\rho_n)$ is small. Moreover, if $\rho$
is not a member in $\Psi^{\tau}[t_{n+1}]$,
many strings in the Schnorr test contributing to
$\m(\Psi^\sigma[t_{n+1}]|\rho_n)$
no longer contribute to $\m(\Psi^\sigma[t_{n+2}]|\rho)$.
i.e., $$\m(\Psi^{\tau}[t_{n+2}]|\rho) = \m(\Psi^{\tau}[t_{n+1},t_{n+2})|\rho).$$
Once we have proved that $\mathbb{E}_{\tau,\rho}[\m(\Psi^{\tau}[t_{n+1}]|\rho)]$
is small, by \Markov\ inequality \ref{schlem12}, there is a $\rho_{n+1}$ and
a large subset of $ T[t_{n+2}]\cap \omega^{l_{n+2}}\cap [\ell(T_n)]^\preceq$
so that $\m(\Psi^\tau[t_{n+2}]|\rho_{n+1})$
is sufficiently small for all $\tau$ in that subset.

In order to make sure that the pruned tree is sufficiently
bushy over \emph{sufficiently many }$\sigma\in \ell(T_{n+1})$
(so that the tree $T_{n+1}$ need not be pruned below certain level),
we will take the average over $\tau$
on an exactly bushy (above some level) tree. This is to avoid that the average is
mainly affected by a small portion of $\ell(T_{n+1})$ above which
the tree $T[t_{n+2}]\cap \omega^{l_{n+2}}\cap [\ell(T_{n+1})]^\preceq$
is much more bushy than the rest.
To ensure that for each level $l$,
the tree is not pruned below level $l$ after some point,
before we decide how to prune the tree,
we wait for a long enough time so that
$\Psi^\tau[t_{n+2}]$ is close enough to $\lambda^*$ for all
$\tau$ in level $l_{n+2}$ in $T[t_{n+2}]$,
as to how close depends on
whatever constructed by step $n$, namely $T_{n+1},\rho_n$ etc.

\end{remark}

\begin{proof}

 Since $ \t{T}\setminus T$ is $q$-small
 over each $\rho\in T$ with $q(x)<<p(x)$ for all
 $x\geq |\eta|$, By hypothesis on  $\t{T}$ and $T$,
for every $\sigma\in T$, every $m$,
there exists $t$ and a tree $T_\sigma$ with $\sigma$ as its stem
such that $T_\sigma$ is $p/2$-bushy over $\sigma$
and $\Psi^\tau(m)[t]\downarrow$ for all $\tau\in \ell(T_\sigma)$.

 \ \\

\noindent \textbf{Initial setup.}
Wait for such a  time $t_0$ that there exists a
tree $T_0\subseteq T[t_0]$,  such that:
\begin{enumerate}

\item For some $l_0\in\omega$, $T_0$ is $p/2$-bushy
from level $|\eta|$ to $l_0-1$ and $\ell(T_0)\subseteq \omega^{l_0}$.

\item for every $\tau\in\ell(T_0)$,
$\lambda_0<\m(\Psi^\tau[t_0])$.

\item Let $\overline{\lambda}_0= \lambda^*-\lambda_0$,
then $\overline{\lambda}_0$ is sufficiently small,
say $\lambda^*<\sqrt{\lambda_0}$.

\item Let $m_0$ be sufficiently large, say
 $\text{for every }
 \tau\in  T[t_{0}]\cap \omega^{l_{0}},
 \text{ every }\rho\in \Psi^{\tau}[t_{0}]$,
$ m_0 >|\rho|$.

\end{enumerate}

For a positive real $a$,
we write $a=o_s(A,B,\cdots)$ if $a$ is
very small, as to how small depends
on the object $A,B,\cdots$.

\noindent\textbf{Inductive hypothesis.}
Fix $n\geq -1$. Suppose by induction
 that we have computed
 \begin{itemize}
 \item a sequence of rationals $(\lambda_{\h{n}}: -1\leq \h n\leq n+1)$;
 \item a sequence of trees $(T_{\h{n}}:-1\leq \h n\leq  n+1)$ with
 $T_{\h{n}}\subseteq T[t_{\h n}]\cap \omega^{\leq l_{\h n}}$; and
 \item  a sequence of strings $(\rho_{\h n}\in 2^{m_{\h n}}: -1\leq \h n\leq n)$
 \end{itemize}
such that for every $-1\leq \h n\leq n$
\footnote{ In these  items, let
$ \lambda_{-1} = 0, l_{-2}= l_{-1} = |\eta|,  \rho_{-1} = \bot,
m_{-2} = m_{-1} =|\rho_{-1}|= 0.$}
\begin{enumerate}
\item Tree $T_{\h n+1}$
 is
 \begin{align}\nonumber
 & \frac{1}{2}p\varepsilon\text{-bushy from level }l_n\text{ to level }l_{\h n+1}-1,
 \\ \nonumber
&\frac{1}{4}p\varepsilon^2\text{-bushy
from level }l_{\h n-1}\text{ to level }l_{\h n}-1,\\ \nonumber
&\text{tree }T_{\h n+1}\cup (\t{T}\setminus T)
\text{ is }
\frac{1}{8}p\varepsilon^3\text{-bushy
from level }l_{-1}
\text{ to level }l_{\h n-1}-1;
\\ \nonumber
&\text{ moreover, }\ell(T_{\h n+1})\subseteq (\t{T}\setminus T)\cup \omega^{l_{\h n+1}}.
\end{align}
\item For every $\sigma\in T_{\h n+1}\cap \omega^{l_{\h n+1}}$,
\begin{align}\nonumber
&\lambda_0+\cdots+\lambda_{\h n+1}<m(\Psi^{\sigma}[t_{\h n+1}])
\text{ and }
\\ \nonumber
&\m(\Psi^\sigma[t_{\h n+1}]|\rho_{\h n})<\sqrt{\lambda_{\h n+1}}.
\end{align}

\item Let $\overline{\lambda}_{\h{n}+1}= \lambda^*-(\lambda_0+\cdots+\lambda_{\h{n}+1})$,
 we have
\begin{align}\nonumber
0<\overline{\lambda}_{\h{n}+1}
<o_s(\t{T}\cap \omega^{\leq l_{\h{n}}},m_{\h{n}},\overline{\lambda}_{\h{n}}).
\end{align}

\item The integer $m_{\h n+1}>m_{\h n}$ is large enough so that
\begin{align}\nonumber
\text{ for every $\tau\in T[t_{\h n+1}]\cap \omega^{l_{n+1}},\rho\in \Psi^\tau[t_{\h n+1}]
$, $m_{\h n+1}>|\rho|$.
}
\end{align}
\end{enumerate}

In addition, we also make the minor requirement that $t_{n+1}\leq l_{n+1}$,
which means $\Psi^\tau[t_{n+1}]=\Psi^\sigma[t_{n+1}]$
for all $\sigma\in T_{n+1}\cap \omega^{l_{n+1}}, \tau\in [\sigma]^\preceq$.
Intuitively, item (3) is because we choose $\lambda_{\h{n}}$
to be very close to $\overline{\lambda}_{\h{n}-1}$
as to how close depends
on whatever constructed by step $\h{n}-1$.
We refer these items as inductive hypothesis.
It's easy to verify that the inductive hypothesis
holds for $\lambda_0,  T_0$ and $\rho_{-1}$
(when $l<\h{l}$, $\h{T}$ being $\h{p}$-bushy from level
$\h{l}$ to level $l$ is meaningless and holds trivially)
especially checking that item (2) holds
since $$\m(\Psi^\sigma[t_0]|\rho_{-1})\leq \lambda^*<\sqrt{\lambda_0}$$
and item (3) holds since $|\t{T}\cap \omega^{\leq |\eta|}| = 1$
(and since $\lambda^*$ is sufficiently small).

\ \\

\noindent\textbf{Initial set up of step $n+1$.}
Now we construct the next tree $T_{n+2}$ and $\rho_{n+1}$.
As we said, we will firstly
wait for a time $t_{n+2}$ so that $\m(\Psi^\tau[t_{n+2}])$ to be close enough to
$\lambda^*$ for many $\tau\in T[t_{n+2}]$.
Then we
pruned the tree  (on whose leaves $\Psi^{\tau}[t_{n+2}]$
is close enough to $\lambda^*$) to an exact
bushy (above level $l_{n-1}$) tree.

By hypothesis of $\t{T}$,
there exists such a time $t_{n+2}$ and a $l_{n+2}\geq t_{n+2}$ such that
: for sufficiently many nodes $\tau$ in $\t T$, $m(\Psi^\tau[t_{n+2}])$
is sufficiently large. More precisely:
\begin{itemize}

 \item
 There exists
 a subset  $S$ of $T_{n+1}\cap \omega^{l_{n+1}}$
 such that for every $\alpha\in T_{n+1}\cap \omega^{l_{n-1}}$,
 either $\alpha\notin  T[t_{n+2}]$, or 
   the set of nodes between level $l_{n-1}$
 and $S$,
 namely $\{\alpha': \text{ for some }\sigma\in S, \alpha\preceq \alpha'\preceq\sigma\}$ is
 \begin{align}\nonumber
 &\text{ exactly }\frac{1}{8}p\varepsilon^2
 \text{-bushy from level
} l_{n-1}
\text{ to level }l_n-1,\\ \nonumber
&\text{ exactly }\frac{1}{4}p\varepsilon
\text{-bushy from level }
 l_n
 \text{ to level }l_{n+1}-1.
 \end{align}
Moreover, for every $\sigma\in S$, there exists
a subset  $S_\sigma $  of $[\sigma]^\preceq\cap T[t_{n+2}]\cap \omega^{l_{n+2}}$
 such that
 \begin{align}\nonumber
 S_\sigma\text{ is exactly }\frac{1}{2}p
 \text{-big over }\sigma.
\end{align}

 \item  For  every $\sigma\in S$, every $\tau\in S_\sigma$,
\begin{align}\nonumber
\lambda_0+\cdots+\lambda_{n+2}<m(\Psi^{\tau}[t_{n+2}]).
\end{align}

\item Where let $\overline{\lambda}_{n+2}=\lambda^*-(\lambda_0+\cdots+\lambda_{n+2})$,
we have
\begin{align}\label{scheq5}
0&<\overline{\lambda}_{n+2} <
o_s(\t{T}\cap \omega^{\leq l_{n+1}},m_{n+1},\overline{\lambda}_{n+1})
\end{align}

\item
Let $m_{n+2} >m_{n+1}$ be large enough so that,
\begin{align}\label{scheq8}
\text{ for every }\tau\in  T[t_{n+2}]\cap \omega^{l_{n+2}},
\text{ every }\rho\in \Psi^{\tau}[t_{n+2}],
 m_{n+2} >|\rho|.
\end{align}
\end{itemize}

Note that such set $S$ does exist since
$T_{n+1}$ is $p\varepsilon^2/4$-bushy
from level $l_{n-1}$ to level $l_n-1$,
$p\varepsilon/2$-bushy from level $l_n$ to level $l_{n+1}-1$
 and $T_{n+1}\setminus   T[t_{n+2}]\subseteq
\t{T}\setminus T$ is $q$-small over each $\rho\in T$ with
$q<p\varepsilon^3/8$.

\def\oc{1^{-}}
\def\Oc{0^+}

Now we construct $\rho_{n+1}$
and shrink the set $T[t_{n+2}]\cap \omega^{l_{n+2}}$
so that for sufficiently many $\tau$ in the shrinked set,
we have
\begin{align}\nonumber
m(\Psi^\tau[t_{n+2}]|\rho_{n+1})<\sqrt{\lambda_{n+2}}.
\end{align}
To this end, we
shrink $S$ to $S^*$ so that
for every $\sigma\in S^*$,
for many $\rho\in 2^{m_{n+1}}\cap [\rho_n]^\preceq$,
$\rho$ is not a member
of $\Psi^\sigma[t_{n+1}]$.
And
 for every $\sigma\in S^*$, we shrink
$S_\sigma$ to $S^*_\sigma$ so that
for every $\tau\in S_{\sigma}^*$,
$\m(\Psi^\tau[t_{n+2}]|\rho_{n+1})$ is small.
The key point is that shrinking
$S$ to $S^*$ does not thin out any nodes
below level $l_{n-1}$. The tree $T_{n+2}$
will be constructed according to $S^*$
and $(S^*_\sigma: \sigma\in S^*)$.
 See Figure \ref{schfig1}
and the explanatory note to have an intuition
of how much tree is pruned.
\begin{claim}\label{schclaim0}
There exists a tree $T_{n+2}$ with $\eta$
as its stem and a $\rho_{n+1}\in [\rho_n]^\preceq\cap 2^{m_{n+1}}$
such that:
\begin{enumerate}
\item Below level $l_{n-1}$, nothing is thinned out.
i.e., $$T_{n+2}\cap \omega^{\leq l_{n-1}} = T_{n+1}\cap \omega^{\leq l_{n-1}}
\cap T[t_{n+2}].$$
Above level $l_{n-1}$,
all leaves lies in level $l_{n+2}$. i.e.,
 $$\ell(T_{n+2})\cap \omega^{\geq l_{n-1}}\subseteq \omega^{l_{n+2}}.$$

\item The tree $T_{n+2}\subseteq T[t_{n+2}]\cap \omega^{\leq l_{n+2}}$
 is
 \begin{align}\nonumber
 \hspace{2cm}&\frac{1}{2}p\varepsilon\text{-bushy from level }
l_{n+1}\text{ to level }l_{n+2}-1,\\ \nonumber
& \frac{1}{4}p\varepsilon^2\text{-bushy
from level }l_{n}
\text{ to level }l_{n+1}-1\\ \nonumber
&\frac{1}{8}p\varepsilon^3\text{-bushy
from level }l_{n-1}
\text{ to level }l_n-1
\text{ and };
\\ \nonumber
&\text{ the tree }T_{n+2}\cup (\t{T}\setminus T) \text{ is }
\frac{1}{8}p\varepsilon^3
\text{-bushy
from level }0\text{ to level }l_{n-1}-1;
\end{align}
\item For every $\tau\in T_{n+2}\cap \omega^{l_{n+2}}$,
\begin{align}\nonumber
\m(\Psi^\tau[t_{n+2}]|\rho_{n+1})<\sqrt{\lambda_{n+2}}.
\end{align}

\item Moreover, the inductive hypothesis holds.

\end{enumerate}
\end{claim}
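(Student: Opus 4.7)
The plan is to construct $\rho_{n+1}$ and $T_{n+2}$ via a bad-good decomposition on $\rho$, a \Markov\ step on $\rho$ combined with a union bound over the level-$l_{n-1}$ roots, a \Markov\ step on $\tau$, and a per-root application of Lemma \ref{schlem0}. For $\sigma\in S$ and $\tau\in S_\sigma$, the constraint $t_{n+1}\le l_{n+1}\le|\sigma|$ forces $\Psi^\tau[t_{n+1}]=\Psi^\sigma[t_{n+1}]$; by inductive hypothesis item (4), every string in $\Psi^\sigma[t_{n+1}]$ has length $\le m_{n+1}$. So each $\rho\in[\rho_n]^\preceq\cap 2^{m_{n+1}}$ is either \emph{bad} for $\sigma$ ($[\rho]\subseteq[\Psi^\sigma[t_{n+1}]]$) or \emph{good} ($[\rho]\cap[\Psi^\sigma[t_{n+1}]]=\emptyset$); for good $\rho$, $\m(\Psi^\tau[t_{n+2}]\mid\rho)=\m(\Psi^\tau(t_{n+1},t_{n+2}]\mid\rho)$.

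I would then fix a root $\alpha\in T_{n+1}\cap\omega^{l_{n-1}}\cap T[t_{n+2}]$ and compute two $\alpha$-local expectations, with $\rho$ uniform on $[\rho_n]^\preceq\cap 2^{m_{n+1}}$ and $\tau$ uniform on $L_\alpha:=\bigcup_{\sigma\in S\cap[\alpha]^\preceq}S_\sigma$ (distributed via the composite exactly-bushy structure of bushiness $\frac18p\varepsilon^2$, $\frac14p\varepsilon$, $\frac12p$ on the three level-ranges): (a) $\mathbb{E}_\rho\mathbb{E}_{\sigma\in S\cap[\alpha]^\preceq}[\mathbf{1}_{\rho\text{ bad for }\sigma}]=\mathbb{E}_{\sigma\in S\cap[\alpha]^\preceq}[\m(\Psi^\sigma[t_{n+1}]\mid\rho_n)]<\sqrt{\lambda_{n+1}}$ by item (2); and (b) $\mathbb{E}_\rho\mathbb{E}_{\tau\in L_\alpha}[\m(\Psi^\tau(t_{n+1},t_{n+2}]\mid\rho)]\le\overline{\lambda}_{n+1}\cdot 2^{m_n}$, negligible by item (3). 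Applying \Markov's inequality (Lemma \ref{schlem12}) to both expectations and union-bounding over the finitely many roots would produce a single $\rho_{n+1}\in[\rho_n]^\preceq\cap 2^{m_{n+1}}$ satisfying, for every $\alpha$: the bad-$\sigma$ fraction in $S\cap[\alpha]^\preceq$ is below $1/16$, and $\mathbb{E}_{\tau\in L_\alpha}[\m(\Psi^\tau(t_{n+1},t_{n+2}]\mid\rho_{n+1})]<\sqrt{\lambda_{n+2}}/16$. A further \Markov\ on $\tau\in L_\alpha$ with target $\sqrt{\lambda_{n+2}}$ would then extract, per $\alpha$, a subset $L^*_\alpha\subseteq L_\alpha$ of good-$\sigma$-extending $\tau$'s on which $\m(\Psi^\tau[t_{n+2}]\mid\rho_{n+1})<\sqrt{\lambda_{n+2}}$, with $|L^*_\alpha|/|L_\alpha|>7/8$, comfortably above $\sum_m\varepsilon(m)<1/4$.

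Finally, I would apply Lemma \ref{schlem0} per root $\alpha$ with $\varepsilon_m=\varepsilon(m)$ to convert each $L^*_\alpha$ into a subtree $T^*_\alpha$ whose bushiness is $\frac12p\varepsilon$ on $[l_{n+1},l_{n+2})$, $\frac14p\varepsilon^2$ on $[l_n,l_{n+1})$, and $\frac18p\varepsilon^3$ on $[l_{n-1},l_n)$, matching item (2) of the claim; setting $T_{n+2}=(\bigcup_\alpha T^*_\alpha)\cup(T_{n+1}\cap T[t_{n+2}]\cap\omega^{\le l_{n-1}})$ yields the required tree. Below $l_{n-1}$, since $T_{n+1}\setminus T[t_{n+2}]\subseteq\t T\setminus T$, we have $T_{n+2}\cup(\t T\setminus T)=T_{n+1}\cup(\t T\setminus T)$, preserving the $\frac18p\varepsilon^3$-bushiness from the inductive hypothesis; items (3) and (4) at step $n+1$ follow from the choices of $\overline{\lambda}_{n+2}$ and $m_{n+2}$ via (\ref{scheq5}) and (\ref{scheq8}). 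The main obstacle will be closing the union bound over roots: the required inequality $|T_{n+1}\cap\omega^{l_{n-1}}|\cdot(\sqrt{\lambda_{n+1}}+\overline{\lambda}_{n+1}\cdot 2^{m_n}/\sqrt{\lambda_{n+2}})<1/32$ follows from item (3)'s $o_s(\t T\cap\omega^{\le l_{n-1}},\cdots)$-smallness of $\overline{\lambda}_n\approx\lambda_{n+1}$ and the analogous smallness of $\overline{\lambda}_{n+1}$, since $|T_{n+1}\cap\omega^{l_{n-1}}|\le|\t T\cap\omega^{l_{n-1}}|$ is determined by $\t T\cap\omega^{\le l_{n-1}}$.
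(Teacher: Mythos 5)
Your proposal is correct, and it proves Claim~\ref{schclaim0} by a genuinely different accounting than the paper's own proof, although both rest on the same two core ideas: the good/bad split of $\rho\in[\rho_n]^\preceq\cap 2^{m_{n+1}}$ relative to $[\Psi^\sigma[t_{n+1}]]^\preceq$, and the observation that for good $\rho$ only the increment $\m(\Psi^\tau(t_{n+1},t_{n+2}]\mid\rho)$ matters, which is bounded by $2^{m_n}\overline\lambda_{n+1}$. The divergence is in how a single $\rho_{n+1}$ is extracted that works simultaneously above every level-$l_{n-1}$ root. You localize to a root $\alpha$, form two $\alpha$-local expectations, apply Lemma~\ref{schlem12} to each, and take a union bound over roots and over the two failure events, relying on $\overline\lambda_n,\overline\lambda_{n+1}=o_s(\t T\cap\omega^{\leq l_{n-1}},\ldots)$ to absorb the factor $|T_{n+1}\cap\omega^{l_{n-1}}|$. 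The paper instead works globally: it first extracts $R\subseteq[\rho_n]^\preceq\cap 2^{m_{n+1}}$ with small bad-$\sigma$ fraction, introduces a coupled measure $P$ on $(\sigma,\rho)$ pairs (sample $\sigma$ uniformly in $S$, then $\rho$ uniformly in $R\setminus[\Psi^\sigma[t_{n+1}]]^\preceq$), bounds $\mbE_P[C(\sigma|\rho)]$, changes measure to $P'$ (sample $\rho$ first, then $\sigma$ in $A_\rho$) via the Radon--Nikodym bound $P'/P<2$, extracts $\rho_{n+1}$ and a \emph{global} $S^*\subseteq A_{\rho_{n+1}}$ by Markov on $\sigma$, and only afterwards verifies the per-root density $|S^*\cap[\alpha]^\preceq|/|S\cap[\alpha]^\preceq|>3/4$ using the same $o_s$ hypothesis. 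Your route eliminates the change-of-measure step entirely and collapses the paper's two separate applications of Lemma~\ref{schlem0} (once for $S^*$ across $[l_{n-1},l_{n+1})$, once for each $S^*_\sigma$ across $[l_{n+1},l_{n+2})$) into a single per-root application across $[l_{n-1},l_{n+2})$. Both routes tacitly use a piecewise-bushiness extension of Lemma~\ref{schlem0}, so that is not a point of difference. The union-bound route is shorter; the paper's route keeps the intermediate object $S^*$ explicit, which it then reuses when assembling $T_{n+2}$. One cosmetic slip in your write-up: the identity $T_{n+2}\cup(\t T\setminus T)=T_{n+1}\cup(\t T\setminus T)$ should be stated as an equality of the restrictions to $\omega^{\leq l_{n-1}}$, not of the whole trees; the intent is clear and the conclusion (preservation of $\frac18 p\varepsilon^3$-bushiness below $l_{n-1}$) is correct.
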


\begin{figure}
\centering
\includegraphics[width=1.0\textwidth]{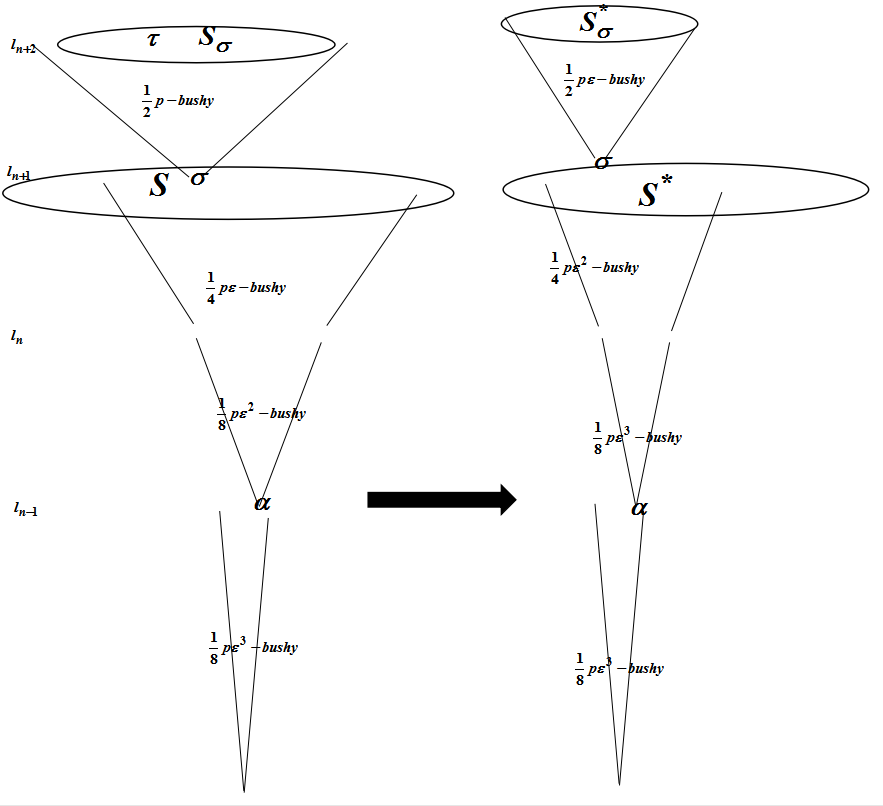}
\caption{Intuitively, the tree
is thinned by a factor of $\varepsilon$ above level $l_{n-1}$
so  that for every  $\tau\in T_{n+2}\cap \omega^{l_{n+2}}$,
$\m(\Psi^\tau[t_{n+2}]|\rho_{n+1})<\sqrt{\lambda_{n+2}}$.
Meanwhile, below level $l_{n-1}$, nothing is thinned out.
}
\label{schfig1}
\end{figure}
\begin{proof}
Item (3)(4) of the inductive assumption
are verified by definition of
$\lambda_{n+2}$ (\ref{scheq5}) and
$m_{n+2}$ (\ref{scheq8}).
Item (1)(2) of the inductive assumption will be verified
by item (2)(3) of this Claim. Therefore it remains to deal with
item (1)(2)(3) of this Claim.
In the following text, we write $\lambda^{\oc}$ if it is
$\lambda^c$ for some constant $c$ that can be chosen arbitrarily close
to $1$ and write $\lambda^{\Oc}$ if it is
$\lambda^c$ for some constant $c$  bounded away from
$0$.

For every $\rho\in 2^{m_{n+1}}$,
 let
 $$A_\rho = \{\sigma\in S: \rho\notin [\Psi^{\sigma}[t_{n+1}]]^\preceq\}.$$
Since $m_{n+1}$ is sufficiently large
(by inductive hypothesis (4)), therefore:
\begin{align}\nonumber
\text{ for every }\rho\in 2^{m_{n+1}},
\text{ every }\sigma\in S,
\text{ either }
\rho\in [\Psi^\sigma[t_{n+1}]]^\preceq
\text{ or }
[\rho]\cap [\Psi^\sigma[t_{n+1}]]=\emptyset.
\end{align}
Thus we can rewrite the inductive assumption (2), namely
  \begin{align}\nonumber
&  \text{ “for every }\sigma\in S,
  \m(\Psi^\sigma[t_{n+1}]|\rho_n)<\sqrt{\lambda_{n+1}}
  \text{ ”
  as:}
\\ \label{scheq19}
& \mbP_{(\sigma,\rho)\sim \U(S\times ([\rho_n]^\preceq
 \cap 2^{m_{n+1}}))}
 \bigg(
 \sigma\notin A_\rho
 \bigg)<\sqrt{\lambda_{n+1}}.
 \end{align}

Take the function $f(\rho) =
 \mbP_{\sigma\sim \U(S)}
 \bigg(
 \sigma\notin A_\rho
 \bigg)$ and rewrite (\ref{scheq19}) as
  $$\mathbb{E}_{\rho\sim
 \U([\rho_n]^\preceq
 \cap 2^{m_{n+1}})}[f(\rho)]<\sqrt{\lambda_{n+1}}.$$
 Let $\h{\lambda}= \lambda_{n+1}^{\Oc}$ in the \Markov\ inequality \ref{schlem12},
  we have
 there exists
 \begin{align}\nonumber
 &\text{ a
  subset } R
  \text{  of }
  [\rho_n]^\preceq\cap 2^{m_{n+1}}
  \text{  with }
  \dfrac{|R|}{|[\rho_n]^\preceq\cap 2^{m_{n+1}}|}>1-\lambda_{n+1}^{\Oc}
\\ \nonumber
&\text{ such that }f(\rho)<\lambda_{n+1}^{\Oc}
\text{ for all }\rho\in R.
\end{align}
By definition of $f$,  $f(\rho)<\lambda_{n+1}^{\Oc}$
translates to $ |A_\rho|/|S|>1-\lambda_{n+1}^{\Oc}$.
In summary:

\begin{align}\label{scheq0}
\dfrac{|R|}{|[\rho_n]^\preceq\cap 2^{m_{n+1}}|}=\dfrac{|R|}{2^{m_{n+1}-m_n}}&>1-\lambda_{n+1}^{\Oc} 
\\ \nonumber
\text{ and for every }
\rho\in R,\ \ \ \  \dfrac{|A_\rho|}{|S|}&>1-\lambda_{n+1}^{\Oc} 
\end{align}

The key note is the following, which says that
 for most $\rho\in 2^{m_{n+1}}\cap [\rho_n]^\preceq$,
 $\m(\Psi^\tau[t_{n+2}]|\rho) $ is much smaller
 than $\m(\Psi^\tau[t_{n+2}]|\rho_n)$
  as long as
 $\rho$ is not a member of $\Psi^\tau[t_{n+1}]$ since
 in that case  $$\m(\Psi^\tau[t_{n+2}]|\rho)
 =\m(\Psi^\tau(t_{n+1},t_{n+2}]|\rho), $$
 which is, in average,
  small since
  $$\m(\Psi^\tau(t_{n+1},t_{n+2}])<\overline{\lambda}_{n+1}
<o_s(m_n).$$
 More precisely,
for every $\sigma\in S, \tau\in S_\sigma$
\footnote{Recall that for every $\sigma\in S$, since $|\sigma|=l_{n+1}\geq t_{n+1}$,
so $\Psi^\tau[t_{n+1}]=\Psi^\sigma[t_{n+1}]$
for all $\tau\succeq\sigma$.}:
\begin{align}\label{scheq1}
&\sum\limits_{\rho\in
R\setminus [\Psi^{\sigma}[t_{n+1}]]^\preceq}
\m(\Psi^\tau[t_{n+2}]|\rho)\cdot\m(\rho|\rho_n)
\\ \nonumber
= &
\sum\limits_{\rho\in
R\setminus [\Psi^{\sigma}[t_{n+1}]]^\preceq}
\m(\Psi^\tau(t_{n+1},t_{n+2}]|\rho)\cdot \m(\rho|\rho_n)
\\ \nonumber
\leq &\sum\limits_{\rho\in
[\rho_n]^\preceq\cap 2^{m_{n+1}}}
\m(\Psi^\tau(t_{n+1},t_{n+2}]|\rho)\cdot\m(\rho|\rho_n)
\\ \nonumber
=&\ \ \m(\Psi^\tau(t_{n+1},t_{n+2}]|\rho_n)
\\ \nonumber
\leq&\ \ 2^{m_n}\overline{\lambda}_{n+1}
\hspace{1.5cm}\text{since }
\m(\Psi^\tau(t_{n+1},t_{n+2}])<\overline{\lambda}_{n+1}
\\ \nonumber
<&
\ \ \overline{\lambda}_{n+1}^{\oc}
\hspace{2cm}\text{since }\overline{\lambda}_{n+1}<o_s(m_n).
\end{align}

Now we prove that $|R\cap [\Psi^{\sigma}[t_{n+1}]]^\preceq|$
is very small compared to $|R|$ for all $\sigma\in S$:
\begin{align}\label{scheq7}
|R\cap [\Psi^{\sigma}[t_{n+1}]]^\preceq|
&= 2^{m_{n+1}}\cdot \m(R\cap [\Psi^{\sigma}[t_{n+1}]]^\preceq)
\\ \nonumber
&\leq 2^{m_{n+1}}\cdot\m( [\rho_n]^\preceq\cap [\Psi^{\sigma}[t_{n+1}]]^\preceq)
\hspace{1cm}\text{ since }R\subseteq [\rho_n]^\preceq
\\ \nonumber
&=2^{m_{n+1}-m_n}\cdot\m(\Psi^{\sigma}[t_{n+1}]|\rho_n)
\\ \nonumber
&<2^{m_{n+1}-m_n}\cdot\sqrt{\lambda_{n+1}}
\hspace{3cm}\text{by inductive hypothesis (2)}.
\end{align}
Therefore by (\ref{scheq0})(\ref{scheq7}),
$R\setminus [\Psi^\sigma[t_{n+1}]]^\preceq$ is very large in $[\rho_n]^\preceq\cap 2^{m_{n+1}}$:
\begin{align}
|R\setminus [\Psi^\sigma[t_{n+1}]]^\preceq|
&= |R|- |R\cap [\Psi^\sigma[t_{n+1}]]^\preceq|
\\ \nonumber
&\geq 2^{m_{n+1}-m_n}
(1-\lambda_{n+1}^{\Oc})
\end{align}
Thus,
 \begin{align}\label{scheq12}
 \frac{1}{|R\setminus [\Psi^\sigma[t_{n+1}]]^\preceq|}
<
\frac{2}{2^{m_{n+1}-m_n}}=2\m(\rho|\rho_n).
\end{align}
For $\sigma\in S$, $\rho\in R$, let
$$
C(\sigma|\rho) =
\mbE_{\tau\sim \U(S_\sigma)}[\m(\Psi^\tau[t_{n+2}]|\rho)].
$$
By (\ref{scheq12}), replace $\m(\rho|\rho_n)$
in (\ref{scheq1}) by $1/|R\setminus [\Psi^\sigma[t_{n+1}]]^\preceq|$,
we have:
for every $\sigma\in S$,
\begin{align}\label{scheq13}
\mbE_{\rho\sim \U(R\setminus [\Psi^\sigma[t_{n+1}]]^\preceq)}
[C(\sigma|\rho)]<
\overline{\lambda}_{n+1}^{\oc}
.
\end{align}
Next, we show that there exists a $\rho_{n+1}\in R$
such that,
 $$\mbE_{\sigma\sim\U(A_{\rho_{n+1}})}[C(\sigma|\rho_{n+1})]
\approx \mbE_{\rho\sim \U(R\setminus [\Psi^{\h\sigma}[t_{n+1}]]^\preceq)}
[C(\h\sigma|\rho)].$$
Averaging (\ref{scheq13})
 over $\sigma$:

\begin{align}\label{scheq2}
\mbE_{(\sigma,\rho)\sim P}
\big[
C(\sigma|\rho)
\big]<
\overline{\lambda}_{n+1}^{\oc}
.
\end{align}

Where $P$ is such a probability measure
on $S\times R$ that if $(\sigma,\rho)\sim P$,
then $\sigma\sim \U(S)$ and
$\rho|\sigma\sim \U(R\setminus [\Psi^\sigma[t_{n+1}]]^\preceq)$.
Consider another probability measure
 $P'$ on $S\times R$ that if $(\sigma,\rho)\sim P'$,
 then $\rho\sim \U(R)$
 and $\sigma|\rho\sim \U(A_\rho)$.
 Note that the support of $P$
 and $P'$ (denoted as $supp(P'),supp(P)$ respectively) are identical
 (namely $\{(\sigma,\rho)\in S\times R:\rho\notin [\Psi^\sigma[t_{n+1}]]^\preceq \}$) and
  for every $(\sigma,\rho)\in supp(P)$:
 \begin{align}\nonumber
 &P(\sigma,\rho)\geq \frac{1}{|S|\cdot |R|}
 \text{ and by (\ref{scheq0}) }
 P'(\sigma,\rho) = \frac{1}{|R|\cdot |A_\rho|}
 <\frac{2}{|S|\cdot |R|},
 \\ \nonumber
 &\text{which means }
 \frac{P'}{P}(\sigma,\rho)<2
 \text{ for all }(
 \sigma,\rho)\in supp(P).
 \end{align}
 Therefore we can rewrite
 (\ref{scheq2}) as
 \begin{align}\nonumber
\mbE_{(\sigma,\rho)\sim P'}
\big[
C(\sigma|\rho)
\big]<
\overline{\lambda}_{n+1}^{\oc}.
\end{align}
This implies, by definition of $P'$, that there exists a member in
$ R$, namely $\rho_{n+1}$,
such that
 \begin{align}\nonumber
\mbE_{\sigma\sim \U(A_{\rho_{n+1}})}
\big[
C(\sigma|\rho_{n+1})
\big]<\overline{\lambda}_{n+1}^{\oc}.
\end{align}
Applying \Markov\ inequality \ref{schlem12},
there exists
an $S^*\subseteq A_{\rho_{n+1}}$
such that
\begin{align}\label{scheq4}
&\dfrac{|S^*|}{|A_{\rho_{n+1}}|}>
1-\overline{\lambda}_{n+1}^{\Oc}
\text{ and for every }
\sigma\in S^*\\ \nonumber
&C(\sigma|\rho_{n+1})<\overline{\lambda}_{n+1}^{\oc}.
\end{align}
Unfolding the definition of $C(\sigma|\rho_{n+1})$
and applying \Markov\ inequality \ref{schlem12},
for every $\sigma\in S^*$,
there exists an $S^*_\sigma\subseteq S_\sigma$
such that
\begin{align}\label{scheq10}
&\dfrac{|S^*_\sigma|}{|S_\sigma|}>1-\overline{\lambda}_{n+1}^{\Oc}
\text{ and for every }\tau\in S^*_\sigma\\ \nonumber
&\m(\Psi^\tau[t_{n+2}]|\rho_{n+1})<
\overline{\lambda}_{n+1}^{\oc}
<\sqrt{\lambda_{n+2}}.
\end{align}

We will ensure that $\ell(T_{n+2})\cap \omega^{l_{n+2}}\subseteq \bigcup_{\sigma\in S^*}S_\sigma^*$,
therefore
(\ref{scheq10}) verifies item (3) of the Claim.
Now we construct $T_{n+2}$ according to $S^*$
and $(S^*_\sigma: \sigma\in S^*)$.
Recall that the key point is that below level
$l_{n-1}$, nothing has to be thinned out.
This is because the proportion of
$S\setminus S^*$ over $S$ (namely
$\lambda_{n+1}^{\Oc}$) is very small
compared to $1/|\t{T}\cap \omega^{\leq l_{n-1}}|$.
By (\ref{scheq4})(\ref{scheq0}),
$$
\frac{|S^*|}{|S|}>1-
\lambda_{n+1}^{\Oc}.
$$
Since $S$ is exactly big above level $l_{n-1}$, for every $\alpha\in T_{n+1}\cap \omega^{l_{n-1}}\cap T[t_{n+2}]$,
$|S\cap [\alpha]^\preceq|$ is identical, therefore
$|S\cap [\alpha]^\preceq| = \frac{|S|}{|T_{n+1}\cap \omega^{l_{n-1}}\cap T[t_{n+2}]|}$.
Thus:
\begin{align}
\dfrac{|S^*\cap [\alpha]^\preceq|}{|S\cap [\alpha]^\preceq|}
&\geq 1-\dfrac{|S\setminus S^*|}{|S\cap [\alpha]^\preceq|}\\ \nonumber
&=1-\dfrac{|S\setminus S^*|}{|S|}\cdot
|T_{n+1}\cap \omega^{l_{n-1}}\cap T[t_{n+2}]|\\ \nonumber
&>1-
\lambda_{n+1}^{\Oc}
\cdot |\t{T}\cap \omega^{\leq l_{n-1}}|\\ \nonumber
&>1-\overline{\lambda}_n^{\Oc}
\cdot |\t{T}\cap \omega^{\leq l_{n-1}}|
\\ \nonumber
&>\frac{3}{4}
\ \ \ \text{since }\overline{\lambda}_n<
o_s(\t{T}\cap \omega^{\leq l_{n-1}}).
\end{align}

Since $S$ is  exactly
$p\varepsilon^2/8$-big from level
 $l_{n-1}$ to level $l_n-1$, and
exactly $p\varepsilon/4$-big from level
 $l_n$ to level $l_{n+1}-1$,
 by Lemma \ref{schlem0},
 for every $\alpha\in T_{n+1}\cap \omega^{l_{n-1}}\cap T[t_{n+2}]$,
 there exists a finite tree $T^*_\alpha \subseteq T_{n+2}$ with $\alpha$ as its stem
 such that $\ell(T^*_\alpha)\subseteq S^*$
 and $T^*_\alpha$ is
 \begin{align}\nonumber
  &  \frac{1}{8}p\varepsilon^3\text{-bushy from level }
 l_{n-1}\text{ to level }l_n-1;\\ \nonumber
 & \frac{1}{4}p\varepsilon^2\text{-bushy from level
 }l_n
 \text{ to level }
 l_{n+1}-1.
 \end{align}

Since $S_\sigma$ is
exactly $p/2$-big over $\sigma$,
by Lemma \ref{schlem0}
and (\ref{scheq10}), for every $\sigma\in S^*$,
there exists a finite tree
$T_\sigma^*$ with $\sigma$ as its stem such that   $\ell(T_\sigma^*)\subseteq S_\sigma^*$
and $T_\sigma^*$ is
\begin{align}\nonumber
\text{ $p\varepsilon/2$-bushy  over $\sigma$.}
\end{align}
 Let
 \begin{align}\nonumber
 &T^* = \bigcup\{ T^*_\alpha: \alpha\in T_{n+1}\cap \omega^{l_{n-1}}\cap T[t_{n+2}]\}
 \text{ and
 }\\ \nonumber
 &T_{n+2} =
 ( T_{n+1}\cap \omega^{\leq l_{n-1}}\cap T[t_{n+2}])
 \cup T^*\cup \bigcup\limits_{\sigma\in \ell(T^*)}T^*_\sigma.
 \end{align}
Combine
with (\ref{scheq10}), $T_{n+2}, \rho_{n+1}$ are the desired
 tree and string. Thus we are done.

\end{proof}

Let
$$\h{T} = \bigcup\limits_{n\geq 0}
(T_{n+2}\cap \omega^{\geq l_{n-1} }\cap \omega^{\leq l_{n}}).
$$

Note that  for every
$\rho$ in the tree
$$ \h{T}\cap \omega^{\geq l_{n-1} }\cap \omega^{\leq l_{n}}
= T_{n+2}\cap \omega^{\geq l_{n-1} }\cap \omega^{\leq l_{n}},$$
the only reason for $[\rho]\cap [\h{T}]=\emptyset$ is that
$\rho$ enters $\overline{T}$.
Therefore, for every $\rho\in \h{T}\cap T$,
$$\h{T}\setminus T\subseteq \t{T}\setminus T
\text{ is }q
\text{-small over }\rho.
$$
Therefore,
\begin{align}\nonumber
 &\text{ every }
 \rho\in \h{T}\cap T
 \text{  has at least }
\\ \nonumber
& \frac{1}{8}p(|\rho|)\varepsilon(|\rho|)^3-q(|\rho|)
\text{ many immediate successors in }\h{T}\cap T.
\end{align}
Thus $$\{\rho\in \h{T}:[\rho]\cap [\h{T}]\ne\emptyset\} = \h{T}\cap T.$$
By definition of $T_n$,
$\h{T}$ is $p\varepsilon^3/8$-bushy over $\eta$
(see item (2) of Claim \ref{schclaim0}).
Let $X= \cup_n \rho_n$.
If for some $Y\in [\h{T}]$,
some $\rho\in \Psi^Y(m)$,
$X\in [\rho]$, then there must exists
$n$ such that $$\m(\Psi^Y[t_{n+1}]|\rho_n)=1>\sqrt{\lambda_{n+1}},$$
a contradiction
to item (3) of Claim \ref{schclaim0}.
Thus $\h{T},X$ is the desired tuple
as in this Lemma.

\end{proof}

Lemma \ref{schlem1} requires that for every
$Y\in [\t{T}]$, $\Psi^Y$ is total.
Starting with
the condition $(\eta,\t{T},p,q)$,
the following Lemma \ref{schlem2}
shows that we can easily shrink the tree $\t{T}$
so that either every $Y$ on the remaining
subtree makes $\Psi^Y$ total, or
every $Y$ on the remaining subtree makes
$\Psi^Y$ non total.
Moreover, the remining subtree is still sufficiently
bushy, say $\h{p}$-bushy over its stem where
$\h{p}$ is a given computable function  that is
very small compared to $p$, but is very large
compared to $q$.
i.e., both $(p,\h{p})$ and $(\h{p},q)$ \allowsplit.

\begin{lemma}\label{schlem2}
Suppose for every $\rho\in T$,
every $m$,
$$\text{ the set }
\big\{\tau\in \t{T} :
\Psi^\tau(m)\downarrow\big\}
\text{ is }\h{p}\text{-big over }\rho.
$$
Then there exists a computable
subtree $\h{T}$ of $\t{T}$ with
$\eta$ being its stem such that:
\begin{enumerate}
\item $\h{T}$ is $\h{p}$-bushy over $\eta$;
\item $\ell(\h{T})$ is $q$-small over each $\rho$
such that $[\rho]\cap [\h{T}]\ne\emptyset$;
\item For every $Y\in [\h{T}]$,
$\Psi^Y$ is total.
\end{enumerate}
\end{lemma}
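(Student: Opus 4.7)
My plan is to construct $\h{T}$ by iteratively extending a finite subtree one layer at a time, so that at stage $m$ I force $\Psi^\tau(m)\downarrow$ at the new leaves. The central obstacle will be ensuring, effectively, that each new leaf stays inside $T$, so the hypothesis remains applicable at the next stage.

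First I will establish the preliminary fact that $\t{T}\setminus T$ is $q$-small over every $\rho\in T$. For if a $q$-bushy witness $T'$ over such a $\rho$ had all leaves in $\t{T}\setminus T$, then by K\"onig's lemma applied to the computably bounded tree $\t{T}$ each such leaf sits atop a finite $\t{T}$-subtree ending in $\ell(\t{T})$, which is $p$-bushy and hence $q$-bushy since $p\gg q$. Grafting these subtrees onto $T'$ yields a $q$-bushy tree over $\rho$ with leaves in $\ell(\t{T})$, contradicting the hypothesis on $\t{T}$.

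Then I will define $\h{T}=\bigcup_m\h{T}_m$ with $\h{T}_0=\{\eta\}$; at stage $m$, for each leaf $\sigma\in\ell(\h{T}_m)$—which by induction lies in $T$—I effectively select a finite $\h{p}$-bushy tree $T_\sigma\subseteq\t{T}$ with stem $\sigma$ such that every leaf of $T_\sigma$ lies in $T$ and satisfies $\Psi^\tau(m)\downarrow$. Existence of such $T_\sigma$ follows from the hypothesis applied at $\sigma$—$\{\tau\in\t{T}:\Psi^\tau(m)\downarrow\}$ is $\h{p}$-big over $\sigma$—combined with the preliminary fact and Lemma~\ref{schlem4}(2): removing the $q$-small part in $\t{T}\setminus T$ leaves a bushy tree with leaves in $T$, the $q$-loss being absorbed by the gap $\h{p}\gg q$. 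To find $T_\sigma$ effectively without deciding the $\Pi^0_1$ predicate ``$\tau\in T$'' directly, I dovetail a search for $T_\sigma$ together with a one-layer $\h{p}$-bushy extension above each of its leaves converging on $\Psi(m+1)$—a $\Sigma^0_1$ condition that terminates by applying the same reasoning twice—so the extension at stage $m$ has its $T$-membership invariant provided by the look-ahead. Setting $\h{T}_{m+1}=\h{T}_m\cup\bigcup_\sigma T_\sigma$ completes the stage.

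Finally I will verify the three conclusions. Bushiness of $\h{T}$ over $\eta$ is immediate from the construction. Any $Y\in[\h{T}]$ visits a leaf of $\h{T}_m$ for each $m$, giving $\Psi^Y(m)\downarrow$ and hence totality. Any non-extendible leaf of $\h{T}$ sits in $\t{T}\setminus T$ by construction, so since $\{\rho:[\rho]\cap[\h{T}]\ne\emptyset\}\subseteq T$, the preliminary fact yields that $\ell(\h{T})$ is $q$-small over each such $\rho$. The hardest step, as flagged above, is the effective selection of $T_\sigma$ with leaves in $T$, which I handle by the $\Sigma^0_1$ look-ahead in place of the unavailable $\Pi^0_1$ verification.
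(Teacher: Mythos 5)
The construction in the paper's own proof does not attempt to keep the leaves of $\h{T}_m$ inside $T$, and that is precisely the point your proposal misses. You set up an invariant---each leaf of $\h{T}_m$ lies in $T$---and then try to preserve it effectively by a one-layer look-ahead: search for $T_\sigma$ together with a one-layer $\h{p}$-bushy extension converging on $\Psi(m+1)$. But this does not certify $T$-membership. A node $\tau\in\t{T}$ can have an entire $\h{p}$-bushy layer of immediate successors on which $\Psi(m+1)$ converges and still satisfy $[\tau]\cap[\t{T}]=\emptyset$: having one more bushy level is not the same as lying on an infinite path. So after your stage-$m$ search returns, some leaves $\sigma'$ of $T_\sigma$ may already be outside $T$, and at stage $m+1$ the hypothesis (which is only stated for $\rho\in T$) gives you nothing above $\sigma'$. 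The look-ahead you bought at stage $m$ supplies a height-one $T_{\sigma'}$, but not the \emph{next} look-ahead layer that your stage-$(m+1)$ search demands; the regress is pushed forward, not resolved. Concretely, if $\t{T}$ above $\sigma'$ is a finite bushy stump converging on $\Psi(m+1)$ but with no bushy extension converging on $\Psi(m+2)$, the dovetailed search at stage $m+1$ never terminates and $\h{T}$ is finite, which is useless for the forcing argument even though it formally satisfies items (1)--(3) vacuously.

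The fix the paper uses drops the invariant entirely. Since $T$ is co-c.e.\ in $\t{T}$, one has a decreasing computable approximation $T[t]$. At stage $s+1$, wait for a time $t$ at which \emph{each} $\sigma\in\ell(\h{T}_s\setminus\h{T}_{s-1})$ has either left $T[t]$ (so $\sigma\notin T$, and $\sigma$ becomes a permanent leaf of $\h{T}$) or acquired a witnessing $T_\sigma$ with $\Psi^\tau(s+1)\downarrow$ on $\ell(T_\sigma)$. This disjunction is $\Sigma^0_1$ and one of the two disjuncts is guaranteed for every $\sigma$: if $\sigma\in T$, the lemma's hypothesis produces $T_\sigma$; if $\sigma\notin T$, it eventually falls out of $T[t]$. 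So the search terminates and the construction is computable. Then $\ell(\h{T})\subseteq\t{T}\setminus T$, and item (2) follows because any $\rho$ with $[\rho]\cap[\h{T}]\ne\emptyset$ lies in $T$, over which $\t{T}\setminus T$ is $q$-small --- which is exactly your preliminary fact, put to the right use. Your overall frame (stagewise construction, preliminary smallness fact, termination by the hypothesis) is sound; what must change is the $T$-membership bookkeeping: replace the unsound look-ahead with the $T[t]$ approximation and let failing nodes become leaves rather than trying to avoid them.
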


\begin{proof}

We compute a sequence of finite trees $\h{T}_s,s\in\omega$
as following.
Let $\h{T}_0 = \{\eta\}$ and let $\h{T}_{-1} = \emptyset$
for convenience.
Suppose we have computed $\h{T}_s$.
To compute $\h{T}_{s+1}$: wait for such a time $t$ that
for every $\sigma\in \ell(\h{T}_{s}\setminus \h{T}_{s-1})$,
either $\sigma\notin T[t]$ or there exists
a  tree $T_\sigma\subseteq \t{T}$ with $\sigma$ as its stem such that
 $T_\sigma$ is $\h{p}$-bushy over $\sigma$ and
 $\Psi^\tau(s+1)\downarrow$ for all
 $\tau\in\ell(T_\sigma)$.
 Let
 $$
 \h{T}_{s+1} = \h{T}_s\cup \bigcup\{T_\sigma: \sigma\in\ell(\h{T}_{s}\setminus \h{T}_{s-1})
 \cap T[t]\}. 
 $$
By hypothesis of the lemma,
such $t$ must exist.
Clearly $\h{T} = \cup_s \h{T}_s\subseteq \t{T}$
is $\h{p}$-bushy over $\eta$.
Moreover, $\ell(\h{T})\subseteq \t{T}\setminus
T$. Therefore
$\ell(\h T)$ is $q$-small over each $\rho\in T$.
Meanwhile, for every $\rho\in \h{T}$ with
$[\rho]\cap [\h{T}]\ne\emptyset$,
we have $\rho\in T$.
Thus item (2) of this Lemma is verified.
It is trivial to verify that
$\Psi^Y$ is total for all $Y\in [\h{T}]$. Thus we are done.

\end{proof}

It's convenient to note the following.
\begin{lemma}\label{schlem3}
Suppose $\t{T}$ is a computable tree with $\eta$
as its stem
and $T\subseteq \t{T}$ is a
pruned co.c.e. tree with $\eta$
as its stem such that
$\t{T}\setminus T$ is $q$-small over each $\rho\in T$
and $\t{T}$ is $p$-bushy over $\eta$.
Then there exists a computable tree $\h{T}\subseteq \t{T}$
with $\eta$ as its stem
such that $[\h{T}]= [T]$, $\h{T}$ is $p$-bushy over $\eta$ and
$\ell(\h{T})$ is
$q$-small over each $\rho$ such that $[\rho]\cap[\h{T}]\ne\emptyset$.
\end{lemma}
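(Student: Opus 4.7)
The plan is to build $\h T$ stage by stage, extending leaves only when permitted by the current approximation to $T$. Let $T = \bigcap_s T_s$ be a decreasing uniformly computable approximation with each $T_s$ a subtree of $\t T$ containing $\eta$. Set $\h T_0 = \{\eta\}$, and at stage $s+1$, for each leaf $\sigma$ of $\h T_s$ with $\sigma \in T_{s+1}$, add to $\h T_{s+1}$ every immediate successor of $\sigma$ in $\t T$; leaves with $\sigma \notin T_{s+1}$ are left alone. Put $\h T = \bigcup_s \h T_s$.

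The crucial observation is that once a leaf fails to be extended, it can never be extended later, because $T_s$ is decreasing in $s$. Consequently, $\sigma \in \h T$ is decidable by simulating the construction for at most $|\sigma|-|\eta|+1$ stages, so $\h T$ is computable. To verify $[\h T] = [T]$: the inclusion $[T] \subseteq [\h T]$ is immediate since $T \subseteq T_s$ for every $s$, so inductively every initial segment of any $X \in [T]$ is added to $\h T$. Conversely, if some $X \uhr n \notin T$, it is enumerated out of $T$ at some stage $s^*$; from then on, no descendant of $X \uhr n$ can satisfy the extension criterion, since $T_s$ is a tree and any extension in $T_s$ would force $X \uhr n \in T_s$. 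Hence only finitely many descendants of $X \uhr n$ ever enter $\h T$, ruling out $X \in [\h T]$.

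For $p$-bushiness I would note that whenever a leaf $\sigma$ is extended, \emph{all} of its immediate successors in $\t T$ are added at once, so $\sigma$ inherits at least $p(|\sigma|)$ children from the $p$-bushiness of $\t T$; every non-leaf of $\h T$ therefore has $\geq p(|\sigma|)$ immediate successors in $\h T$. For $q$-smallness of $\ell(\h T)$, suppose $\rho$ satisfies $[\rho]\cap[\h T] \ne \emptyset$. Since $[\h T] = [T]$ and $T$ is pruned, $\rho \in T$. If some $\sigma \in \ell(\h T) \cap [\rho]^\preceq$ were itself in $T$, then pruned-ness of $T$ would give $\sigma$ an immediate successor in $\t T$, and $\sigma \in T \subseteq T_s$ for all $s$ would force the construction to extend $\sigma$ at its unique processing stage, contradicting $\sigma \in \ell(\h T)$. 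Thus $\sigma \in \t T \setminus T$, so $\ell(\h T) \cap [\rho]^\preceq \subseteq \t T \setminus T$, which is $q$-small over $\rho$ by hypothesis.

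The only real subtlety is the computability of $\h T$: a priori $\bigcup_s \h T_s$ is only c.e., but the monotonicity of the approximation $T_s$ turns this into a decidable condition, since a leaf that is not extended at its unique processing stage will never be extended later. Everything else is bookkeeping using the pruned-ness of $T$ and the $p$-bushiness of $\t T$.
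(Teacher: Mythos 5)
Your construction is correct, and it is the routine argument that the paper leaves implicit (Lemma \ref{schlem3} is stated without proof). Two small points are worth making explicit. First, you rely on each approximating stage $T_s$ being a tree, both when you argue that an expelled node blocks all its descendants and when you speak of "the" processing stage of a leaf; this is always arrangeable by taking $T_s=\{\sigma\in\tilde T:\text{no prefix of }\sigma\text{ has been enumerated out of }T\text{ by stage }s\}$, and you should say so. Second, for the construction to be effective you need to be able to list the immediate successors in $\tilde T$ of a given node, i.e.\ $\tilde T$ should be computably bounded; the lemma as stated omits this, but it holds in every application (the definition of condition in Section \ref{schsec2} requires $\tilde T$ computably bounded). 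With those two caveats understood, the rest is exactly right: monotonicity of $(T_s)$ makes membership in $\hat T$ decidable in $|\sigma|-|\eta|$ stages; $T\subseteq T_s$ gives $[T]\subseteq[\hat T]$ while the tree structure of $T_{s}$ gives the converse; a non-leaf of $\hat T$ is a non-leaf of $\tilde T$, so it inherits at least $p(|\sigma|)$ children; and every leaf of $\hat T$ lies in $\tilde T\setminus T$, which is $q$-small over every $\rho\in T$, hence over every $\rho$ with $[\rho]\cap[\hat T]\ne\emptyset$.
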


Now we are ready to finally prove that:

\begin{lemma}\label{schlem6}
Every condition
admits an extension that forces
$\mcal{R}_\Psi$.

\end{lemma}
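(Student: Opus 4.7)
The plan is a dichotomy based on whether $\Psi^G$ can be forced to be total on a sufficiently bushy extension. Fix a computable auxiliary bound $\hat p$ so that $(p,\hat p)$ \allowsplit s, $(\hat p,q)$ \allowsplit s, and $q<\hat p\varepsilon^3/8$ eventually; such $\hat p$ exists because $(p,q)$ \allowsplit s.

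\textbf{Case 1 (totality can be forced).} Suppose for every $\rho\in T$ with $[\rho]\cap[T]\ne\emptyset$ and every $m\in\omega$, the set $\{\tau\in \t T:\Psi^\tau(m)\downarrow\}$ is $\hat p$-big over $\rho$. Apply Lemma \ref{schlem2} to obtain a computable subtree $T'\subseteq T$ with stem $\eta$ that is $\hat p$-bushy over $\eta$, has $\ell(T')$ being $q$-small over each relevant node, and on which $\Psi^Y$ is total for every $Y\in[T']$. Now apply Lemma \ref{schlem1} to the condition $(\eta,T',\hat p,q)$: since $q<\hat p\varepsilon^3/8$ and $\Psi^Y$ is total on $[T']$, we obtain a further computable subtree $\hat T\subseteq T'$ and a computable real $X\in 2^\omega$ such that $\hat T$ is $\hat p\varepsilon^3/8$-bushy over $\eta$, $\ell(\hat T)$ is $q$-small over the relevant nodes, and $X\notin\bigcup_m[\Psi^Y(m)]$ for every $Y\in[\hat T]$. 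The extension $(\eta,\hat T,\hat p\varepsilon^3/8,q)$ forces $\mcal R_\Psi$: the Schnorr test $\Psi^Y$ fails to succeed on the computable real $X\in\text{REC}$, so it does not cover REC.

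\textbf{Case 2 (non-totality can be forced).} The hypothesis of Lemma \ref{schlem2} fails, so there exist $\rho_0\in T$ (with $[\rho_0]\cap[T]\ne\emptyset$) and $m_0\in\omega$ such that $A:=\{\tau:\Psi^\tau(m_0)\downarrow\}$ is $\hat p$-small over $\rho_0$. We build an extension below $\rho_0$ forcing $\Psi^G(m_0)\uparrow$, which trivially forces $\mcal R_\Psi$ since $\Psi^G$ then fails to be a Schnorr test. Let
\[
T^\ast=\{\sigma\in T\cap[\rho_0]^\preceq:\Psi^\tau(m_0)\uparrow\text{ for all }\rho_0\preceq \tau\preceq \sigma\},
\]
a co.c.e.\ subtree of $T\cap[\rho_0]^\preceq$, and let $T^\ast_p$ be its pruned version. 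A trimming argument shows $T\cap[\rho_0]^\preceq\setminus T^\ast$ is $\hat p$-small over $\rho_0$: any finite $\hat p$-bushy tree over $\rho_0$ with leaves in this complement can be cut at the first ancestor in $A$ on each branch to produce a $\hat p$-bushy witness for $A$ being $\hat p$-big, a contradiction. Combined with $\ell(T)\cap[\rho_0]^\preceq$ being $q$-small over $\rho_0$, Lemma \ref{schlem4}(1) yields that $T\cap[\rho_0]^\preceq\setminus T^\ast_p$ is $(\hat p+q)$-small over $\rho_0$, so $T^\ast_p$ is infinite and still sufficiently bushy. Finally apply Lemma \ref{schlem3}, with $\t T:=T\cap[\rho_0]^\preceq$ and pruned co.c.e.\ tree $T^\ast_p$, to convert $T^\ast_p$ into a computable subtree $\hat T$ with $[\hat T]=[T^\ast_p]$ and a suitable $\hat p'$ between $q$ and $\hat p$; the extension $(\rho_0,\hat T,\hat p',q)$ forces $\Psi^G(m_0)\uparrow$.

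The main obstacle is in Case 2: Lemma \ref{schlem3} requires $\hat p$-smallness of $\t T\setminus T^\ast_p$ over \emph{every} node in $T^\ast_p$, not just over the initially chosen $\rho_0$. Since $\hat p$-smallness of $A$ over $\rho_0$ need not propagate to extensions $\rho'\succeq\rho_0$ (the trivial path from $\rho_0$ to $\rho'$ cannot be re-used as a $\hat p$-bushy witness), this calls for a refined choice of the witness $\rho_0$, obtained by iterating the dichotomy: whenever smallness fails to persist at some deeper $\rho'$ in the candidate subtree, replace $\rho_0$ by $\rho'$ and restart, using a well-foundedness/compactness argument (or equivalently running Lemma \ref{schlem2}-style stage construction and switching to non-totality the first time a $\hat p$-bushy $\Psi(m)$-convergent block cannot be found) to see that the process terminates at a node above which smallness holds uniformly. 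Case 1, in contrast, is essentially a packaging of Lemmas \ref{schlem2} and \ref{schlem1}, with the sole delicacy being the choice of $\hat p$ matching the numerical precondition of Lemma \ref{schlem1}.
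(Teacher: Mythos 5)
Your Case 1 (forcing totality) matches the paper's treatment: apply Lemma \ref{schlem2} to restrict to a $\hat p$-bushy subtree on which $\Psi$ is total, then apply Lemma \ref{schlem1} to the resulting condition; the computable $X$ witnesses $\mcal R_\Psi$. That part is fine.

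The gap is in your Case 2, and it is the one you yourself flag. Defining
\[
T^\ast=\{\sigma\in T\cap[\rho_0]^\preceq:\Psi^\tau(m_0)\uparrow\text{ for all }\rho_0\preceq\tau\preceq\sigma\}
\]
only gives you $\hat p$-smallness of the complement \emph{over $\rho_0$}, whereas Lemma \ref{schlem3} (and the definition of a condition) demands smallness over every node of the subtree. Your proposed repair — iterate by replacing $\rho_0$ whenever smallness fails deeper in the tree — is not an argument: there is no obvious well-founded ranking, and one can easily build a tree where smallness of $A=\{\tau:\Psi^\tau(m_0)\downarrow\}$ over $\rho_0$ coexists with $A$ being $\hat p$-big above a positive fraction of $\rho_0$'s descendants at every level, so the ``restart'' process has no reason to terminate. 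The paper avoids this entirely by not tracking divergence directly; it sets
\[
\h T=\bigl\{\tau\in T\cap[\xi]^\preceq:\{\tau'\in\t T:\Psi^{\tau'}(m)\downarrow\}\text{ is not }\hat p\text{-big over }\tau\bigr\},
\]
and this definition has the propagation you need built in: if $T\setminus\h T$ were $\hat p$-big over some $\rho\in\h T$, concatenating the witnessing $\hat p$-bushy tree with, above each of its leaves $\sigma$, a $\hat p$-bushy witness that the convergence set is big over $\sigma$, would show the convergence set is $\hat p$-big over $\rho$, contradicting $\rho\in\h T$. Hence $T\setminus\h T$ is $\hat p$-small over \emph{every} node of $\h T$, $\h T$ is pruned and $(p-\hat p-q)$-bushy, and one may apply Lemma \ref{schlem3} directly (noting $\hat p+q<2\hat p$, which is why the paper requires $(p,2\hat p)$ to allow split rather than just $(p,\hat p)$). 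Finally, every $\tau\in\h T$ has $\Psi^\tau(m)\uparrow$ (otherwise the full cone above $\tau$ would witness bigness of the convergence set over $\tau$), so $\Psi^Y(m)\uparrow$ for all $Y\in[\h T]$. Your divergence-set $T^\ast$ lacks this self-reinforcing character, and that is exactly the missing idea.
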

\begin{proof}
 Fix a condition $(\eta,\t{T},p,q)$
 and recall that $$T = \big\{
\rho\in \t{T}:[\rho]\cap [\t{T}]\ne\emptyset
\big\}.
$$
Clearly $T$  is a co-c.e. pruned
subtree of $\t{T}$
(and with $\eta$ as its stem), $T$ is $(p-q)$-bushy
over $\eta$ and $\t{T}\setminus T$ is
$q$-small over every $\rho\in T$.
 Suppose $|\eta|$ is sufficiently
large so that there exists a computable function
$\h{p}:\omega\rightarrow\mathbb{Q}$
such that $(p,2\h{p}), (\h{p},q)$ \allowsplit\
and $p(x)>>\h{p}(x)>>q(x)>>x$ for all $x\geq |\eta|$.

\ \\

\textbf{Case 1.}
There exist a $\xi\in T$,
$m\in\omega$
such that
$\big\{\tau\in \t{T}:
\Psi^\tau(m)\downarrow\big\}$
is not $\h{p}$-big over $\xi$.

Let $$\h{T} =
\big\{
\tau\in T\cap [\xi]^\preceq:
\{\tau'\in \t{T}:
\Psi^{\tau'}(m)\downarrow\}
\text{ is not }\h{p}\text{-big over }\tau
\big\}.$$
Clearly $\h{T}$ is a co-c.e.  tree
with $\xi$ as its stem. By bushy tree combinatorics
and since $\h{p}+q<p$,
$\h{T}$ is pruned.
Actually every $\rho\in\h{T}$
has at least $p(|\rho|)-\h{p}(|\rho|)-q(|\rho|)$
many immediate successor in $\h{T}$
since $T$ is $(p-q)$-bushy over $\xi$.
Note that  by definition of $\h{T}$,
$$\text{ the set }T\setminus \h{T}
\text{ is }\h{p}\text{-small
over every }\rho\in \h{T}.
$$
Since $\t{T}\setminus T$ is $q$-small over
every $\rho\in T$,
by Lemma \ref{schlem4},
$$\text{ the set }
\t{T}\setminus \h{T}
\text{ is }
(\h{p}+q)\text{-small over every }
\rho\in \h{T}.
$$
Since  $\h{p}+q<2\h{p}$,
by Lemma \ref{schlem3}
there exists a computable subtree
$T^*$ of $\h{T}$
 such that $[T^*] = [\h{T}]$,
 $T^*$ is $p$-bushy over $\xi$,
and $\ell(T^*)$ is
$2\h{p}$-small over each $\rho $
such that $[\rho]\cap [T^*]\ne\emptyset$.
Thus $(\xi^*, T^*\cap [\xi^*]^\preceq,p, 2\h{p})$ is the desired condition
forcing $\mcal{R}_\Psi$ negatively
where $\xi^*$ is a sufficiently long
extension of $\xi$
in $ \{\rho\in T^*:
[\rho]\cap [T^*]\ne\emptyset\}$.
Thus we are done in Case 1.

\ \\

\textbf{Case 2.}
Otherwise.

By Lemma \ref{schlem2},
there exists  a computable
subtree $\h{T}$ of $\t{T}$
such that
\begin{itemize}
\item $\h{T}$
is $\h{p}$-bushy over $\eta$;

\item $\ell(\h{T})$ is
$q$-small over every $\rho$
such that $[\rho]\cap [\h{T}]\ne\emptyset$;
\item For every $Y\in [\h{T}]$,
$\Psi^Y$ is total.
\end{itemize}
Thus by Lemma \ref{schlem1},
there exists a condition
$(\eta,T^*,p^*,q)$ extending
$(\eta,\h{T},\h{p},q)$ that forces
$\mcal{R}_\Psi$. Thus we are done.

\end{proof}
\subsection{Avoid computing Schnorr random real}\label{schsec1}
\def\J{J}
Our proof employs
the framework in \cite{greenberg2011diagonally} with a
combinatorial difference. We begin with some combinatorial notions.

\begin{definition}
Given a finite set
$S$, a collection of sets
$$\mcal{B}=\{B_0,\cdots,B_{n-1}\}\subseteq \mcal{P}(S)$$
is \emph{$(k,\delta)$-hash} in $S$ if for every $\J\subseteq n$ with
$|\J| = k$, we have:
$$\big|\ \bigcap\limits_{i\in \J} B_i\ \big|/| S|
< \delta.$$
\end{definition}
Sometimes we simply say ``$(k,\delta)$-hash
collection $\mcal{B}$" when the set $S$ is clear.
\begin{lemma}\label{codinglem2}
Given any $1>\varepsilon>\delta>0$, any $k\in\omega$ with
$k>\log \delta/\log \varepsilon$,
we have: for every $n\in\omega$,
there exists an $N=N(\varepsilon,\delta,k,n)$
 such that if $| S|>N$,
then there exists a $(k,\delta)$-hash
collection of sets $\mcal{B}\subseteq \mcal{P}(S)$
with $|\mcal{B}|>n$ such that for all $B\in \mcal{B}$,
$| B|/| S|>\varepsilon$.
\end{lemma}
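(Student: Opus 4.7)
The statement is a clean probabilistic/counting fact, so my plan is a straightforward use of the probabilistic method together with Chernoff-style concentration. First I would observe that the hypothesis $k > \log\delta/\log\varepsilon$ (recalling $\log\varepsilon,\log\delta<0$) is exactly equivalent to $\varepsilon^k<\delta$. By a continuity argument I can then fix an $\varepsilon'\in(\varepsilon,1)$ such that still $(\varepsilon')^k<\delta$; write $\delta' = \frac{1}{2}(\delta+(\varepsilon')^k)$ and $\varepsilon'' = \frac{1}{2}(\varepsilon+\varepsilon')$ as the ``slack'' thresholds I will aim at. The point of the slack is to have a positive gap on both sides that the concentration bounds can eat up.

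Next, I would sample the collection $\mathcal{B}=\{B_0,\dots,B_n\}$ at random: for each $s\in S$ and each $i\le n$, put $s\in B_i$ independently with probability $\varepsilon'$. Then $\mathbb{E}[|B_i|/|S|]=\varepsilon'$, and for every $J\subseteq\{0,\dots,n\}$ with $|J|=k$, $\mathbb{E}[|\bigcap_{i\in J}B_i|/|S|]=(\varepsilon')^k$. By Chernoff/Hoeffding (applied to the sum of $|S|$ independent Bernoullis), there are constants $c_1,c_2>0$ depending only on $\varepsilon,\varepsilon',\varepsilon''$ and on $(\varepsilon')^k,\delta',\delta$ such that
\[
\Pr\bigl(|B_i|/|S|\le\varepsilon\bigr)\le e^{-c_1|S|},\qquad
\Pr\bigl(\,|\,{\textstyle\bigcap_{i\in J}} B_i|/|S|\ge\delta\bigr)\le e^{-c_2|S|}.
\]
A union bound then gives probability of failure at most $(n+1)e^{-c_1|S|}+\binom{n+1}{k}e^{-c_2|S|}$.

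Finally I would choose $N=N(\varepsilon,\delta,k,n)$ large enough that this quantity is $<1$; this is possible since $c_1,c_2>0$ while the $n$-dependent prefactors are just polynomial in $n$. Whenever $|S|>N$, the random $\mathcal{B}$ succeeds with positive probability, so at least one realization is a $(k,\delta)$-hash family of size $n+1>n$ all of whose members have relative size exceeding $\varepsilon$. The only slightly subtle point is the quantitative choice of $\varepsilon'$ and of the Chernoff thresholds, but this is purely a matter of picking numerical buffers and then writing down the standard deviation bounds; no new combinatorial idea is needed.
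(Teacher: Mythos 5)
Your proposal is correct and is essentially the paper's own proof: both sample $n+1$ random sets by including each element independently with a probability $\varepsilon'$ (the paper's $\hat\varepsilon$) chosen strictly between $\varepsilon$ and $\delta^{1/k}$, and both conclude by concentration plus a union bound that for $|S|$ large a successful realization exists. The paper phrases the concentration step loosely as ``by the law of large numbers, with high probability,'' whereas you make the Chernoff bound and the union bound explicit, but there is no difference in the underlying argument.
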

\begin{proof}
Let $1>\h{\varepsilon}>\varepsilon$
be such that $\h{\varepsilon}^k<\delta$.
Such $\h{\varepsilon}$ exists since
$k>\log \delta/\log \varepsilon$.
To construct  members of $\mcal{B}$,
namely $B_i,i\leq n$, let
$B_i$ include each $x\in S$ independently
of every thing else with probability
$\h{\varepsilon}$.
By law of large number,
if $|S|$ is sufficiently large, then
since $\delta >\h{\varepsilon}^k $,
we have with high probability:
\begin{align}\nonumber
\text{ for every }i\leq n, | B_i|> \varepsilon |S|
\text{ and for every }J\subseteq n
\text{ with }|J|=k,
\big|\  \bigcap\limits_{i\in J}
B_{i}\ \big|< \delta |S|.
\end{align}

In summary, if $|S|$ is sufficiently large,
then the above construction
generate a $(k,\delta)$-hash
collection
of sets $\mcal{B}$ satisfying
$$| \mcal{B}|>n\wedge
(\forall B\in\mcal{B})[ | B|/| S|>\varepsilon]$$
 with probability larger than $0$.
This means such collection of sets exists.

\end{proof}

In the following text of this subsection,
let $\t{T}$ be a computably bounded
computable tree with
$\eta$ as its stem,
let $$T = \{\rho\in \t{T}:
[\rho]\cap [\t{T}]\ne\emptyset\}.$$
Suppose
$\t{T}$ is a
 $p$-bushy over $\eta$ and
$\ell(\t{T})$ is $q$-small over every $\rho\in T$
where $p,q$ are
computable function.
Let $\h{p}:\omega\rightarrow\mathbb{Q}$
be such a computable function that
\begin{align}\nonumber
6q<3\h{p}<p,\
\lim\limits_{n\rightarrow \infty}
q(n)/\h{p}(n)\searrow=0, \
\lim\limits_{n\rightarrow \infty}
\h{p}(n)/p(n)\searrow=0
\end{align}
 and $(\h{p},q)$ \allowsplit.
The main ingredient is the following.
\begin{lemma}\label{schlem11}
Suppose $\Psi^Y$ is total for all
$Y\in [\t{T}]$.
For every $\rho\in T$, every $\lambda>0$,
there exists a $V^*\subseteq 2^{<\omega}$ with $\m(V^*)<\lambda$
and a finite 
tree $\h{T}$ with
$\rho$ as its stem such that $\h{T}$ is
$\h{p}$-bushy over $\rho$ and for some $N\in\omega$,
$$(\Psi^\tau\uhr N)\downarrow\in [V^*]^\preceq
\text{ for all }
\tau\in \ell(\h{T}).$$

\end{lemma}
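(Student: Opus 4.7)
The plan is a pigeonhole argument on the possible values of $\Psi^\tau \uhr N$ at a sufficiently high level, combined with the bushy-tree arithmetic of Lemma~\ref{schlem4}. First, I choose $N \in \omega$ with $2^{-N} < \lambda$. Using totality of $\Psi^Y$ on $[\tilde T]$ together with compactness of $[\tilde T]$ (which is computably bounded), I find a level $l$ so that $\Psi^\tau \uhr N \downarrow$ for every $\tau \in T \cap \omega^l \cap [\rho]^\preceq$. Exploiting the asymptotic hypotheses $\hat p / p \searrow 0$ and $q/\hat p \searrow 0$, I arrange in addition that the key inequality $p(m) - q(m) \geq 2^N \hat p(m)$ holds at every level $m \in [|\rho|, l-1]$; this may require first branching at $\rho$ via the $(p-q)$-bushy structure of $T \cap [\rho]^\preceq$, as discussed below.

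At level $l$ I partition $T \cap \omega^l \cap [\rho]^\preceq$ into $2^N$ classes $C_\sigma = \{\tau : \Psi^\tau \uhr N = \sigma\}$ indexed by $\sigma \in 2^N$. Now $\tilde T \cap \omega^l \cap [\rho]^\preceq$ is $p$-big over $\rho$ (by $p$-bushiness of $\tilde T$), while $(\tilde T \setminus T) \cap \omega^l \cap [\rho]^\preceq$ is $q$-small over $\rho$ (a routine consequence of $\ell(\tilde T)$ being $q$-small together with $p$-bushiness of $\tilde T$, via Lemma~\ref{schlem4}), so Lemma~\ref{schlem4}(2) yields that $\bigcup_\sigma C_\sigma = T \cap \omega^l \cap [\rho]^\preceq$ is $(p-q)$-big over $\rho$. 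Iterating the contrapositive of Lemma~\ref{schlem4}(1): if every $C_\sigma$ were $\hat p$-small over $\rho$, then $\bigcup_\sigma C_\sigma$ would be $(2^N \hat p)$-small over $\rho$, contradicting the $(p-q)$-bigness by the key inequality. Hence some $\sigma^* \in 2^N$ witnesses that $C_{\sigma^*}$ is $\hat p$-big over $\rho$. Take $\hat T$ to be a finite $\hat p$-bushy tree over $\rho$ with $\ell(\hat T) \subseteq C_{\sigma^*}$, and set $V^* = \{\sigma^*\}$; then $\mu(V^*) = 2^{-N} < \lambda$ and $\Psi^\tau \uhr N = \sigma^* \in V^*$ for every $\tau \in \ell(\hat T)$, as required.

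The main obstacle is securing the key inequality $p(m) - q(m) \geq 2^N \hat p(m)$ uniformly on $[|\rho|, l-1]$, not merely for large $m$. Since $p/\hat p$ is monotonically increasing to $\infty$ and $q/\hat p$ decreases monotonically to $0$, the inequality is automatic once $p(|\rho|)/\hat p(|\rho|) > 1/\lambda$. For short $\rho$, however, the pointwise hypothesis $p > 3\hat p > 6q$ only supports the bound at $N = 1$, and one must first expand $\rho$ into a $\hat p$-bushy pre-tree inside $T \cap [\rho]^\preceq$ (which is $(p-q)$-bushy, hence $\hat p$-bushy, over $\rho$) whose leaves lie in the asymptotic regime, then apply the main argument above each such leaf. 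The covers collected from different branches must be combined into a single $V^*$ of measure still below $\lambda$; avoiding an additive blow-up in measure is the delicate step, and this is where I expect the hashing collections of Lemma~\ref{codinglem2} to enter, by forcing the covers from different branches to share common structure rather than accumulate independently.
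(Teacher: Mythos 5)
Your proposal is built around a pigeonhole into the $2^N$ disjoint singleton classes $C_\sigma$, but that approach has a gap you partially notice and do not close, and it is not how the paper proceeds.

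The quantitative problem is that the union of $2^N$ $\hat p$-small sets is only guaranteed $(2^N\hat p)$-small, so for the singleton pigeonhole you genuinely need $p(m)-q(m) > 2^N\hat p(m)$ at \emph{every} level $m$ in $[|\rho|,\,l-1]$. Since $N$ is tied to $\lambda$ via $2^{-N}<\lambda$, this forces a bushiness ratio $p/\hat p > 1/\lambda$ all the way down to $|\rho|$, which the pointwise hypothesis $p>3\hat p>6q$ does not provide. Your proposed repair --- first branch $\rho$ into a $\hat p$-bushy pre-tree whose leaves sit at a level where $p/\hat p$ is large, then run the argument above each leaf and union the resulting singletons into $V^*$ --- is circular: the level $l$ at which $p/\hat p>2^N$ holds grows with $N$, the number of leaves of the pre-tree up to level $l$ grows with $l$, and $\m(V^*)$ is (number of leaves)$\cdot 2^{-N}$, so you cannot choose $N$ to close the estimate without assumptions on the growth rate of $p/\hat p$ that the lemma does not give. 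You correctly guess that the hashing collections of Lemma~\ref{codinglem2} must be involved, but you use them only as a last-paragraph placeholder rather than as the actual mechanism.

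The paper's argument inverts your decomposition. Instead of partitioning $2^N$ into disjoint \emph{small} pieces, it takes a $(k,\lambda)$-hash family $\mcal{V}$ of \emph{large} sets $V\subseteq 2^N$ with $\m(V)>1-\lambda$, with $k\approx\log\lambda/\log(1-\lambda)$ depending only on $\lambda$, and $|\mcal{V}|\geq M$ very large. Assuming no $V$ of small measure has $S_V$ $\hat p$-big, each $S_{2^N}\setminus S_V$ is $(\hat p+q)$-small. The pigeonhole is then not on the values $\Psi^\tau\uhr N$ but on the finitely many possible predecessor sets $\{\sigma\in\omega^{\leq l}:[\sigma]^\preceq\cap S_V\neq\emptyset\}$: by making $M$ large relative to $|\t{T}\cap\omega^{\leq l}|$, one finds $V_0,\dots,V_{k-1}\in\mcal{V}$ whose $S_{V_j}$'s all have the same footprint below level $l$. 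Then $\bigcap_{j<k}S_{V_j}$ loses nothing below level $l$ (the footprints coincide), and above level $l$ it loses only a factor of $k$, absorbed by choosing $l$ with $5k\hat p(l)<p(l)$. The hash property, not the singleton bound $2^{-N}$, is what gives $\m(V^*)=\m(\cap_j V_j)<\lambda$. So the thinning factor is $k$ (a function of $\lambda$ only), not $2^N$ --- that is the step your proposal is missing. You should replace the disjoint singleton partition with this overlapping-large-sets-with-small-intersection scheme and do the pigeonhole on predecessor footprints rather than on $\Psi$-values.
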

\begin{proof}
If such $V^*$ does not exists, then for every
$V$ with $\m(V)\geq 1-\lambda$,
we have that for most $\tau$, $\Psi^\tau\in [V]^\preceq$.
Then we select $V_0,\cdots,V_{k-1}$ for some
$k$ that is not so large (say $k=  \log \lambda/\log (1-\lambda)$)
 such that
$$\m(\cap_{j<k}V_j)<\lambda $$ while there are still many
$\tau$ such that $$\Psi^\tau\in [\cap_{j<k}V_j]^\preceq.$$
More specifically, let $S_{V_j}$ be the set of $\tau$
such that $\Psi^\tau\uhr N\in [V_j]^\preceq$.
We want to make sure that $\bigcap_j S_{V_j}$ is sufficiently big
and therefore $\bigcap_j V_j$ is the desired $V^*$.
Since for each $j$, $\overline{S}_{V_j}$ is very small,
therefore, above a sufficiently large level $l$
(depending on $k$) $\bigcap_j S_{V_j}$ must be still sufficiently
big above level $l$
(since $p(n)>k\h{p}(n)$ for all
sufficiently large $n$). But below level $l$, it may not be the case
if we don't carefully choose $V_0,\cdots,V_{k-1}$.
To this end, note that there are much more (than $k$)
sets $V$ such that $\m(V)\geq 1-\lambda$ and $S_V$
is sufficiently big. Given the level $l$, we
select $V_0,\cdots,V_{k-1}$ among a very large
(depending on $\t{T}\cap \omega^{\leq l}$)
hash collection so that the initial segments
of $S_{V_j}$ below level $l$ are identical.

Let $k,l,M,N\in\omega$ and $\mcal{V}\subseteq \mcal{P}(2^N)$ be such that:
\begin{enumerate}
\item
$k>\frac{\log \lambda}{\log (1-\lambda)};$

\item $5k\h{p}(l)<p(l)$;

\item $M$ is sufficiently large so that
for every $M$ many trees $T_0,\cdots,T_{M-1}\subseteq \t{T}\cap \omega^{\leq l}$,
 at least $k$ of them that are identical;

\item $\mcal{V}\subseteq \mcal{P}(2^N)$ is
a $(k,\lambda)$-hash
collection of finite sets
 such that
$|\mcal{V}|\geq M$ and
$\m(V)>1-\lambda$ for all $V\in\mcal{V}$.

\end{enumerate}
Clearly $k$ exists. Given $k$, by the condition on
$\h{p},p$, any sufficiently large
$l$ satisfies (2). Since $\t{T}\cap \omega^{\leq l}$
is finite, any sufficiently large $M$ satisfies (3).
By Lemma \ref{codinglem2}, $N$ and the collection $\mcal{V}$ exists.
Thus the objects as above exist.

By hypothesis on $\t{T}$, there exists a sufficiently large time
$t\in\omega$ such that
for every
$\tau\in\t{T}\cap T[t]\cap \omega^{\leq t}$,
the Turing functional $\Psi^\tau\uhr N$
converges.
For every $V\subseteq 2^N$,
let $$S_V = \big\{
\tau\in [\rho]^\preceq\cap\t{T}\cap T[t]\cap \omega^t:
\Psi^\tau\uhr N\in V
\big\}.$$
Since $\t{T}/T$ is $q$-small and
$\Psi^Y$ is total for all $Y\in [\t{T}]$,
we have that $S_{2^N}$ is $(p-q)$-big over $\rho$.
If for some $V$ with $\m(V)<\lambda$,
$S_V$ is $\h{p}$-big over $\rho$, then we are done.
Suppose this is not the case.

Note that $S_{2^N} = S_{V}\cup S_{2^N\setminus V} $
for all $V\subseteq 2^N$.
Therefore,
 by Lemma \ref{schlem4},
for every $V\subseteq 2^N$ with
$\m(V)\geq 1-\lambda$,
$$S_{2^N}\setminus S_V
\text{ is }(\h{p}+q)
\text{-small over }\rho.
$$
By definition of $M$, there exist
$V_0,\cdots,V_{k-1}\in\mcal{V}$ such that the
set of predecessors of $S_{V_j} $ below level $l$, namely
$\{\sigma\in \omega^{\leq l}:[\sigma]^\preceq\cap S_{V_j}\ne\emptyset\}$,
are identical.
Let \begin{align}\nonumber
&\h{T} = \{\sigma\in \t{T}\cap  [\rho]^\preceq \cap \omega^{\leq t}:
\sigma\preceq\tau\text{ for some }\tau\in \cap_{j<k} S_{V_j}\}
\\ \nonumber
\text{ and }&V^* = \cap_{j<k} V_j.
\end{align}
 We prove that $\h{T},V^*$ are as desired.
By definition of $(k,\lambda)$-hash, $\m(V^*)<\lambda$.
Clearly
$$
\text{ for every }\tau\in \ell(\h{T}),
(\Psi^\tau\uhr N)\downarrow\in V^*.$$
Since below level $l$, the set of predecessors of $S_{V_j}$
are identical.
Therefore
\begin{align}\label{scheq16}
\h{T}\text{ is }(p-\h{p}-q)
\text{-bushy from level }
|\rho|\text{ to level }l-1.
\end{align}
Since $S_{2^N}\setminus S_{V_j}$ is $(q+\h{p})$-small,
by Lemma \ref{schlem4},
$$\cup_{j<k}(S_{2^N}\setminus S_{V_j})
\text{ is }k(q+\h{p})\text{-small.}$$
By Lemma \ref{schlem4} again,
$$\cap_{j<k}S_{V_j}\text{ is }
(p-q-kq-k\h{p})\text{-big above level }l.$$
Therefore the tree
\begin{align}\label{scheq15}
\h{T}\text{ is }(p-4k\h{p})\text{-bushy above level }l
\text{ since }4k\h{p}>q+kq+k\h{p}.
\end{align}
Since $p(n)-4k\h{p}(n)>\h{p}(n)$ for all $n\geq l$
and $p-\h{p}-q>\h{p}$,
combine (\ref{scheq16})(\ref{scheq15})$\h{T}$ is
$\h{p}$-bushy over $\rho$.
Thus we are done.
\end{proof}
The following is exactly the same as Lemma \ref{schlem2}.
Let $\h{p}:\omega\rightarrow\mathbb{Q}$ be a computable function
such that both $(p,\h{p})$ and $(\h{p},q)$ \allowsplit.
\begin{lemma}\label{schlem5}
Suppose for every $\rho\in T$,
every $m$,
the set $\big\{\tau\in \t{T}\cap [\rho]^\preceq:
\Psi^\tau(m)\downarrow\big\}$
is $\h{p}$-big over $\rho$.
Then there exists a computable
subtree $\h{T}$ of $\t{T}$ with
$\eta$ being its stem such that:
\begin{enumerate}
\item $\h{T}$ is $\h{p}$-bushy over $\eta$;
\item $\ell(\h{T})$ is $q$-small over each $\rho$ such that
$ [\rho]\cap [\h{T}]\ne\emptyset$;
\item For every $Y\in [\h{T}]$,
$\Psi^Y$ is total.
\end{enumerate}
\end{lemma}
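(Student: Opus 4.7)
The plan is to mirror the construction from Lemma \ref{schlem2} verbatim, since the hypothesis and conclusion are essentially identical (the only cosmetic change is restricting $\tau$ to $[\rho]^\preceq$, which is immaterial since $\h{p}$-bigness over $\rho$ only involves nodes extending $\rho$). Concretely, I would build a computable nested sequence of finite trees $\h{T}_0\subseteq \h{T}_1\subseteq\cdots$ by stages, starting from $\h{T}_{-1}=\emptyset$ and $\h{T}_0=\{\eta\}$, and then let $\h{T}=\bigcup_s \h{T}_s$.

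At stage $s+1$, for each $\sigma\in\ell(\h{T}_s\setminus \h{T}_{s-1})$ I search for a finite tree $T_\sigma\subseteq \t{T}$ with stem $\sigma$ that is $\h{p}$-bushy over $\sigma$ and with $\Psi^\tau(s+1)\downarrow$ for every $\tau\in\ell(T_\sigma)$. The search is performed by waiting for a stage $t$ at which, for every such $\sigma$, \emph{either} a witnessing finite $T_\sigma$ has appeared, \emph{or} $\sigma\notin T[t]$ (so $\sigma$ has already been observed to have no infinite extension in $\t{T}$). I then set
$$\h{T}_{s+1}=\h{T}_s\cup\bigcup\{T_\sigma: \sigma\in\ell(\h{T}_s\setminus\h{T}_{s-1})\cap T[t]\}.$$
The hypothesis of the lemma ensures that for every $\sigma\in T$ the set $\{\tau\in\t{T}\cap[\sigma]^\preceq:\Psi^\tau(s+1)\downarrow\}$ is $\h{p}$-big over $\sigma$, so a witnessing $T_\sigma$ does eventually appear; for $\sigma\notin T$, $\sigma\notin T[t]$ for all sufficiently large $t$ because $T$ is co-c.e. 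Hence the waiting procedure terminates, and $\h{T}$ is computable.

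The three conclusions then follow directly: $\h{T}$ is $\h{p}$-bushy over $\eta$ by construction; every leaf of $\h{T}$ lies in $\t{T}\setminus T$ (since any $\sigma\in T$ eventually gets extended), so $\ell(\h{T})$ is contained in $\t{T}\setminus T$, which is $q$-small over every $\rho\in T$, giving item (2); and for every $Y\in [\h{T}]$, $Y$ passes through $\ell(\h{T}_s\setminus \h{T}_{s-1})$ for arbitrarily large $s$, hence $\Psi^Y(m)\downarrow$ for every $m$, giving item (3).

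The only step that requires any thought is the termination of the wait for $t$; but this is guaranteed by the $\h{p}$-bigness hypothesis together with the fact that $(\h{p},q)$ \allowsplit\ makes $\h{p}$ a legitimate target bushiness. No genuinely new combinatorics is required beyond what was used for Lemma \ref{schlem2}; the argument transfers verbatim.
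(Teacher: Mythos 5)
Your proposal is correct and matches the paper exactly: the paper in fact does not give a separate proof of Lemma \ref{schlem5} but simply remarks that it is "exactly the same as Lemma \ref{schlem2}", and your stage-by-stage construction (wait for each current leaf either to drop out of $T[t]$ or to sprout a $\h{p}$-bushy tree of convergence witnesses) reproduces that proof verbatim. The one minor quibble is that the \allowsplit\ hypothesis plays no role in the termination of the wait — termination follows solely from the $\h{p}$-bigness assumption plus $T$ being co-c.e. — but this does not affect correctness.
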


Now we are ready to prove our conclusion.
\begin{lemma}\label{schlem7}
Every condition
admits an extension that forces
$\mcal{R}_\Psi'$.

\end{lemma}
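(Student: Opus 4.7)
The plan is to mirror Lemma~\ref{schlem6}, replacing the ``avoid Schnorr-covering'' construction by a single-test construction that traps $\Psi^G$. Fix a condition $(\eta,\t{T},p,q)$, let $T=\{\rho\in\t{T}:[\rho]\cap[\t{T}]\ne\emptyset\}$, and choose computable functions $\h{p}_1,\h{p}_2$ with $p\gg\h{p}_1\gg\h{p}_2\gg q$, arranged so that both $(\h{p}_1,q)$ and $(\h{p}_2,q)$ \allowsplit\ and the slack hypotheses of Lemmas~\ref{schlem11} and~\ref{schlem5} are met. I then split along the same dichotomy as in Lemma~\ref{schlem6}. If there exist $\xi\in T$ and $m\in\omega$ such that $\{\tau\in\t{T}:\Psi^\tau(m)\downarrow\}$ is not $\h{p}_1$-big over $\xi$, Case~1 of Lemma~\ref{schlem6} (via Lemmas~\ref{schlem4} and~\ref{schlem3}) yields an extension on which $\Psi^Y(m)$ is undefined for every path $Y$; then $\Psi^G\notin 2^\omega$ and is vacuously not a Schnorr random real.

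Otherwise, I first invoke Lemma~\ref{schlem5} to pass to a computable subtree $\h{T}$ of $\t{T}$ that is $\h{p}_1$-bushy over $\eta$, whose leaf set is $q$-small above every extendable node, and on which every $\Psi^Y$ is total. I then build uniformly computably a nested sequence of finite trees $F_0=\{\eta\}\subseteq F_1\subseteq\cdots\subseteq\h{T}$ together with a uniformly computable Schnorr test $(V_n)_{n\in\omega}$. Given $F_n$, enumerate its $K_n$ leaves $\tau_1,\dots,\tau_{K_n}$; for each $i$ apply Lemma~\ref{schlem11} at $\tau_i$ with parameter $\lambda=2^{-(n+1)}/K_n$ to obtain a finite $V_i^*$ with $\m(V_i^*)<2^{-(n+1)}/K_n$ and a finite $\h{p}_2$-bushy tree $\h{T}_i$ with stem $\tau_i$ whose every leaf $\sigma$ satisfies $(\Psi^\sigma\uhr N_i)\downarrow\in [V_i^*]^\preceq$ for some $N_i$. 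Set $V_{n+1}=\bigcup_i V_i^*$ (so $\m(V_{n+1})\le 2^{-(n+1)}$) and $F_{n+1}=F_n\cup\bigcup_i\h{T}_i$.

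Then $T^*:=\bigcup_n F_n$ is an infinite computable tree with stem $\eta$, $\h{p}_2$-bushy over $\eta$, and because every leaf is extended at the very next stage $\ell(T^*)=\emptyset$, which is trivially $q$-small. Hence $(\eta,T^*,\h{p}_2,q)$ is a valid extension of $(\eta,\t{T},p,q)$. Every $Y\in[T^*]$ passes through a unique leaf $\sigma\in\ell(F_{n+1})\subseteq\bigcup_i\ell(\h{T}_i)$, so some prefix of $\Psi^Y$ lies in $V_{n+1}$; iterating over $n$ yields $\Psi^Y\in\bigcap_n\bigcup_{m>n}[V_m]$, so $\mathbf{V}$ succeeds on $\Psi^Y$ and $\Psi^Y$ is not Schnorr random. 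This forces $\mcal{R}'_\Psi$.

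The main obstacle I anticipate is the bookkeeping inside the $(F_n,V_n)$ construction: the parameter $\lambda$ fed into Lemma~\ref{schlem11} at stage $n$ must shrink fast enough to absorb the leaf count $K_n$ in the measure budget, while Lemma~\ref{schlem11} must still output a $\h{p}_2$-bushy extension at that $\lambda$. This is exactly why the slack chain $\h{p}_2\ll\h{p}_1\ll p$ has to be installed up front, in the same spirit as the slack parameters in Lemma~\ref{schlem6}.
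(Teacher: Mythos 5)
Your Case 1 and overall architecture match the paper's proof, but there is a real gap in Case 2. You apply Lemma~\ref{schlem11} at \emph{every} leaf $\tau_i$ of $F_n$ and then conclude $\ell(T^*)=\emptyset$. Lemma~\ref{schlem11} is only stated and provable for $\rho\in T$, i.e.\ for nodes that extend to an infinite path; its internal ``wait for a sufficiently large time $t$'' step has no reason to terminate at a dead end. After you pass to $\h{T}$ via Lemma~\ref{schlem5}, the relevant pruned tree is $\{\rho\in\h{T}:[\rho]\cap[\h{T}]\ne\emptyset\}$, which is only co-c.e., and some leaves of $F_n$ will not lie in it — indeed $\ell(\h{T})\subseteq\t{T}\setminus T$, so any leaf of $F_n$ that happens to also be a leaf of $\h{T}$ is already a dead end. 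At such a leaf the construction stalls, and in any case the claim $\ell(T^*)=\emptyset$ is false: dead-end leaves remain leaves.

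The paper avoids this by \emph{not} committing to extending every leaf. At each stage $n$ it waits for a time $t$ at which, for every $\sigma\in\ell(T_n)\cap T[t]$, a suitable $T_\sigma$ and $V_\sigma$ exist, and only those surviving leaves are extended; leaves that have fallen out of $T[t]$ are abandoned. The resulting $\h{T}$ then has $\ell(\h{T})\subseteq\t{T}\setminus T$, which is $q$-small over every extendable node (not empty), and that is exactly what the condition demands. Your plan is salvaged by this same device: at stage $n$ test membership in $T[t]$ rather than in $T$, extend only the surviving leaves, keep the measure budget $2^{-(n+1)}/|T_n|$ (or your $K_n$, both work since the count only shrinks), and settle for $\ell(T^*)$ being $q$-small rather than empty. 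With that amendment the argument coincides with the paper's; the preliminary pass through Lemma~\ref{schlem5} and the $\h{p}_1/\h{p}_2$ chain are a harmless stylistic variation on the paper's ``for convenience assume $\Psi^Y$ is total''.
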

\begin{proof}
 Fix a condition $(\eta,\t{T},p,q)$
 and recall that $$T = \big\{
\rho\in \t{T}:[\rho]\cap [\t{T}]\ne\emptyset
\big\}.
$$
Clearly $T$  is a co-c.e. pruned
subtree of $\t{T}$
(and with $\eta$ as its stem), $T$ is $(p-q)$-bushy
over $\eta$ and $\t{T}\setminus T$ is
$q$-small over every $\rho\in T$.
 Suppose $|\eta|$ is sufficiently
large so that there exists a computable
function $\h{p}:\omega\rightarrow\mathbb{Q}$ such that
$$6q<3\h{p}<p,
\lim\limits_{n\rightarrow \infty}
q(n)/\h{p}(n)\searrow=0,
\lim\limits_{n\rightarrow \infty}
\h{p}(n)/p(n)\searrow=0$$
 and $(\h{p},q)$ \allowsplit.

\textbf{Case 1.}
There exist $\xi\in T$,
$m\in\omega$
such that
the set $\big\{\tau\in \t{T}:
\Psi^\tau(m)\downarrow\big\}$
is not $\h{p}$-big over $\xi$.

This part is the same as Case 1 of Lemma \ref{schlem6}
and is therefore omitted.

\textbf{Case 2.}
Otherwise.

For convenience, we simply assume that
$\Psi^{Y}$ is total for all $Y\in [\t{T}]$.
We inductively define a sequence of finite
trees $T_n\subseteq \t{T}$ with $\eta$ as their stem
 and a Schnorr test $V_0,V_1,\cdots$
 that will succeed on all $\Psi^Y, Y\in[\t{T}]$.
Let $T_0 = \{\eta\}$.
Suppose we have defined $T_0,\cdots, T_n$ and $V_0,\cdots,V_{n-1}$.
To define $V_n$, wait for such a time $t$
that for every $\sigma\in \ell(T_n)\cap T[t]$,
there exists a finite,
 $\h{p}$-bushy over $\sigma$ tree $T_\sigma$
with $\sigma$ as its stem and
a $V_\sigma$ with $\m(V_\sigma)<2^{-n}/|T_n|$
such that for every $\tau\in \ell(T_\sigma)$,
$$(\Psi^\tau\uhr N)\downarrow\in [V_\sigma]^\preceq \text{ for some }N.$$
By Lemma \ref{schlem11}, such time $t$ must exist.
Define
\begin{align}
&V_n = \bigcup\limits_{}\{V_\sigma:\sigma\in\ell(T_{n})\cap T[t]\}
\text{ and }\\ \nonumber
&T_{n+1} = T_n\cup (\bigcup\limits_{}
\{T_\sigma:\sigma\in \ell(T_n)\cap T[t]\}).
\end{align}
 Note that $\m(V_n)\leq 2^{-n}$, therefore $V_0,V_1,\cdots$
is a Schnorr test.
Let $\h{T} = \cup_n T_n$. It's easy to see that
the Schnorr test $V_0,V_1,\cdots$ succeed on
$\Psi^Y$  for all $Y\in [\h{T}]$.
Clearly $\h{T}$ is $\h{p}$-bushy over $\eta$ and
$$\{\rho\in \h{T}:[\rho]\cap [\h{T}]\ne\emptyset\} =
\h{T}\cap T.$$
Thus
$(\eta,\h{T},\h{p},q)$ is the desired extension
of $(\eta,\t{T},p,q)$ forcing $\mcal{R}'_\Psi$.

\end{proof}

\section{A $\DNR_h$ that is of minimal degree}

Our goal in this section is to prove the following.
Let $h\in\omega^\omega$ be an order function
such that $\lim_{n\rightarrow \infty}
h(n)/(\prod_{m<n}h(m))^k=\infty$
for all $k$.
\begin{theorem}\label{schth1}
For every $X$,
there exists a $\DNR_h^X$ that is of minimal degree.

\end{theorem}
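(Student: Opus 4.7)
The plan is to adapt the bushy tree forcing from Section~\ref{schsec2} to the computable tree $h^{<\omega}$, where $h$ satisfies the stated growth condition. A condition will again be a tuple $(\eta, T, p, q)$ with $T \subseteq h^{<\omega}$ a computable tree with stem $\eta$ and $[T] \subseteq \DNR_h^X$, such that $T$ is $p$-bushy over $\eta$ and $\ell(T)$ is $q$-small over every $\rho$ with $[\rho] \cap [T] \ne \emptyset$, for suitable computable $p, q$ with $(p,q)$ \allowsplit. The hypothesis $h(n)/(\prod_{m<n} h(m))^k \to \infty$ for all $k$ is precisely what lets us choose $p$ growing as a large polynomial in $\prod_{m<n} h(m)$ while keeping $q$ small enough to stay inside $\DNR_h^X$, producing an initial condition by the same construction as the $(\bot, \t T, \t p, 2)$ example in Section~\ref{schsec2}.

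The requirements are, for each Turing functional $\Psi$,
\[
\mathcal{R}_\Psi: \Psi^G \text{ is partial, or } \Psi^G \text{ is computable, or } G \leq_T \Psi^G,
\]
and I would dispatch $\mathcal{R}_\Psi$ by three cases, in parallel with Lemma~\ref{schlem6}. If some $\xi \in T$ and some $m$ give $\{\tau \in T : \Psi^\tau(m)\downarrow\}$ being $\hat p$-small over $\xi$ (for a suitable intermediate function $\hat p$ with $(p,\hat p),(\hat p,q)$ both \allowsplit), we extend to force $\Psi^G$ partial exactly as in Case~1 of Lemma~\ref{schlem6}. Otherwise, by the analogue of Lemma~\ref{schlem2} we pass to a subtree on which $\Psi^Y$ is total for every $Y$. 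On this subtree we must force either $\Psi^G$ computable, or $\Psi$ \emph{e-splitting} on an extension tree $\hat T$, meaning that for every $\sigma \in \hat T$ and every pair $\tau_0, \tau_1 \succeq \sigma$ in $\hat T$ there exist $\tau_i' \succeq \tau_i$ with $\Psi^{\tau_0'}$ and $\Psi^{\tau_1'}$ incomparable; a $\Psi$-splitting subtree yields $G \leq_T \Psi^G$ by the usual tree-splitting argument.

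The main obstacle, and the heart of the paper's improvement over \cite{khan2017forcing}, is the combinatorial splitting lemma: starting from a condition with bushiness $p$, one wants a $\Psi$-splitting extension with bushiness at worst $p/c$ for an absolute constant $c$, instead of $p/2$ (which, iterated across $j$ functionals, would reproduce the Khan--Miller $2^j$ loss). I plan to argue as follows. Fix a node $\sigma$; at a well-chosen level above $\sigma$, for each node $\tau$ look at the partial string $\Psi^\tau$ up to the first place where the tree above $\tau$ can be made to split $\Psi$. Using Lemma~\ref{schlem4} together with Lemma~\ref{schlem12}, one shows that either a $\hat p$-big subcollection of extensions of $\sigma$ forces $\Psi$ to agree with a single fixed string (so $\Psi^G$ is computable on a refinement, which we absorb and iterate), or a bushy subcollection of \emph{pairs} of splitting witnesses can be extracted, and Lemma~\ref{schlem0} converts this into a subtree that is exactly bushy with only a constant-factor loss relative to $p$. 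The key new observation, compared to Khan--Miller, is that when the procedure is run over multiple functionals sequentially, the losses can be made \emph{additive} rather than multiplicative by reusing a single thinning selection across functionals whenever splittings for earlier functionals already witness incomparability, so that committing to values is only required at nodes where all functionals considered so far agree; this bookkeeping is what brings the per-functional cost down from a factor of $2$ to essentially $1/j$ of the remaining bushiness budget.

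With the splitting lemma in hand, standard Mathias-style forcing produces a descending sequence of conditions $(\eta_0, T_0, p_0, q_0) \supseteq (\eta_1, T_1, p_1, q_1) \supseteq \cdots$ each of which forces one requirement, and any $G \in \bigcap_n [T_n]$ (which exists by compactness since the initial tree is computably bounded) is a $\DNR_h^X$ of minimal degree.
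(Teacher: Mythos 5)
Your overall outline (Mathias-type bushy-tree forcing, three-way case split for each functional, a splitting lemma with a smaller loss factor than Khan--Miller) is in the right spirit, but there are two substantive gaps, one structural and one at the heart of the combinatorics.

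First, the structural gap: your condition format $(\eta, T, p, q)$ with $T$ a \emph{computable} tree and $[T] \subseteq \DNR_h^X$ cannot work for arbitrary $X$, since for non-computable $X$ there is no nontrivial computable tree contained in $\DNR^X$. This is exactly the obstacle the paper flags before defining its conditions for Theorem~\ref{schth1}: in Section~\ref{schsec2} the ``core'' tree $\{\rho \in \t T : [\rho]\cap[\t T] \ne \emptyset\}$ is co-c.e., but relative to an arbitrary oracle it no longer is. The paper therefore uses a \emph{two-tree} condition $(\eta, \t T, T, p_T, q_T)$, where $\t T$ is strong c.e.\ and $T \subseteq \t T$ is allowed to be of arbitrary complexity (in the application, $T$ encodes $\DNR_h^X$); the bushiness requirements are that $T$ is $p_T$-bushy over $\eta$ while $\t T \setminus T$ is $q_T$-small over each $\sigma \in T$. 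All the splitting work (Lemmas~\ref{schlemmain2}--\ref{schlem10}) is then carried out effectively on $\t T$, ``passively waiting'' for $\Psi$ to split, so that the constructed splitting tree is strong c.e.\ while the noneffective $T$ just trims which infinite paths count. Without this decoupling, the construction of the final condition at each step cannot even get started for non-computable $X$.

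Second, the splitting lemma: your paragraph on bringing the per-functional cost down to ``essentially $1/j$'' is a hope rather than an argument, and the mechanism you describe (reusing a single thinning across functionals, only ``committing to values'' where all functionals agree) does not correspond to anything the paper does, nor is it clear it can be made to work. The paper's actual device is Lemma~\ref{schlemmain2}, which introduces the collections $\mcal{V}^i_{n,q}$ and $\mcal{\t V}^i_{n,q}$ and treats the bushiness needed to force $\Psi_i^G \in [V]$ as a kind of ``measure of $V$ according to $\Psi_i$.'' The key combinatorial fact is the analogue of: given $j$ continuous measures on a space, you can partition it into $j$ pieces each of measure $\geq 1/j$ under its assigned measure. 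The inductive proof of Lemma~\ref{schlemmain2} has to work around the failure of continuity (single strings can carry mass, handled in Case~2) and around the failure of true additivity (Lemma~\ref{uemlem3} is only subadditive), and it is this delicate case analysis that produces the mutually disjoint $V_0,\dots,V_{j-1}$ with only a linear-in-$j$ bushiness cost. That argument is the paper's real content; your sketch does not supply it, and it is not recoverable from Lemmas~\ref{schlem4}, \ref{schlem12}, \ref{schlem0} alone by the route you indicate.
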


The rest of this section will prove Theorem \ref{schth1}.
In section \ref{schsec2}, the core tree of a condition,
namely the set $\{\rho\in \t{T}:[\rho]\cap [\t{T}]\ne\emptyset\}$
is co-c.e. Since we are working
on $\DNR$ relative to an arbitrary oracle,
this is no longer true.
In \cite{{khan2017forcing}},
Khan and Miller construct a minimal degree
within a sufficiently bushy tree with an
arbitrary complex core tree.
This is done by passively wait for
$\Psi$ to split and only focus on
the nodes above which it does split.

A tree $T$ is \emph{strong c.e.} iff there exists
a computable array of finite trees $T_s,s\in\omega$ such that
$$T_s\subseteq T_{s+1}, T_{s+1}\setminus T_s\subseteq [\ell(T_s)]^\preceq
\text{ and }\cup_s T_s=T.$$

We will, again, use the Mathias type forcing
to force all requirements.
In this section, a \emph{condition }
 is
 a tuple
 $(\eta,\t{T},T,p_T,q_T)$
 such that
 \begin{enumerate}
 \item $\t{T}$ is a strong c.e.
 tree with $\eta$ as its stem;
 \item $T\subseteq \t{T}$ is an infinite
 tree with no leaf and with $\eta$ as its stem;
 \item $p_T,q_T$ are computable functions
 from $\omega$ to $\mathbb{Q}$;

 \item $T$ is $p_T$-bushy over $\eta$
 and $\t{T}\setminus T$ is $q_T$-small over
 every $\sigma\in T$.
 \item $p_T(n)>1$ for all $n\geq |\eta|$ and
 $$\lim\limits_{n\rightarrow \infty}
 \dfrac{p_T(n)}{\max\{q_T(n),1\}\cdot|\t{T}\cap \omega^n|^k}=\infty$$
 for all $k$.
\end{enumerate}

We emphasis that there is no complexity restriction on
$T$. Again, a condition $(\eta,\t{T},T,p_T,q_T)$
is seen as a collection of candidates of the $G$ we are constructing,
namely $[T]$.
A condition $(\eta',\t{T}',T',p'_T,q'_T)$ \emph{extends}
condition $(\eta,\t{T},T,p_T,q_T)$ iff:  $\t{T}'\subseteq \t{T}$
and $T'\subseteq T$ (which automatically implies $\eta'\in T$).
The requirement is:
$$
\mcal{R}_\Psi: \Psi^G\text{ is computable or non total or }
G\leq_T \Psi^G.
$$
A condition  $(\eta,\t{T},T,p_T,q_T)$ \emph{forces}
$\mcal{R}_\Psi$ iff for every
$G\in [T]$, $\Psi^G$ satisfies
$\mcal{R}_\Psi$.

We begin with some notions.
Let $\t{T}$ be a strong c.e. tree
with $\eta$ as its stem.
For a Turing functional $\Psi$,
an $n\in\omega$,
and a computable function $q:\omega\rightarrow\mathbb{Q}$,
let
\begin{align}\label{scheq18}
&\mcal{\t{V}}_{\Psi,n,q} =
\big\{
V\subseteq 2^n:
\text{The set } \{\sigma\in \t{T}:
(\Psi^\sigma\uhr n)\downarrow\in 2^n\setminus V\}
\text{ is }q\text{-small over }\eta
\big\}. \\ \nonumber
&\mcal{V}_{\Psi,n,q} =
\big\{
V\subseteq 2^n:
\text{ there exists
a finite } q\text{-big over }\eta\text{ set }S\subseteq \t{T}
\\ \nonumber
&\hspace{3cm}\text{ such that for every }\sigma\in S,
(\Psi^\sigma\uhr n)\downarrow\in V.
\big\}.
\end{align}
Intuitively, $V\in \mcal{\t{V}}_{\Psi,n,q}$
means that $q$ is a measure of how much tree one needs to prune
in order to force $\Psi^G\in [V]$.
If $V\in \mcal{\t{V}}_{\Psi,n,q}$ for small $q$, it means
$\Psi$ easily avoid $2^n\setminus V$; we interpret this as 
the measure of $2^n\setminus V$ is smaller than $q$.
On the other hand, if $V\in \mcal{V}_{\Psi,n,q}$
for some $q$, it means we can force $\Psi^G\in [V]$
in a $\Sigma_1^0 $ way on a $q$-bushy tree;
 we interpret this as the measure of $V$ is lager than 
 $q$.
We make some simple observations. These observations
 coincides with the measure interpretation
 of $\mcal{\t{V}}_{\Psi,n,q}$.
 For example, item (2) can be interpreted as 
 if $2^n\setminus V, 2^n\setminus V'$ 
 has measure smaller than $q,q'$ respectively,
 then their union, $2^n\setminus (V\cap V')$, has measure
  smaller than $q+q'$.

\begin{lemma}\label{uemlem3}
Let $V,V'\subseteq 2^n$:
\begin{enumerate}

\item   $V\in\mcal{V}_{\Psi,n,q}$ if and only if
$2^n\setminus V\notin\mcal{\t{V}}_{\Psi,n,q}$.

\item
If $V\in \mcal{\t{V}}_{\Psi,n,q}$, $V'\in\mcal{\t{V}}_{\Psi,n,q'}$,
then $V\cap V'\in\mcal{\t{V}}_{\Psi,n,q+q'}$.

\item If $V \in\mcal{\t{V}}_{\Psi,n,q}, V'\in \mcal{V}_{\Psi,n,q'}$,
then $V\cap V'\in\mcal{V}_{\Psi,n,q'-q}$;

\item If $V \in\mcal{\t{V}}_{\Psi,n,q}$, $V'\notin \mcal{V}_{\Psi,n,q'}$,
then
 $V\setminus V'\in \mcal{\t{V}}_{\Psi,n,q+q'}$.

\item If $V\in\mcal{\t{V}}_{\Psi,n,q}$,
$V'\notin \mcal{\t{V}}_{\Psi,n,q'}$,
then $V\setminus V'\in \mcal{V}_{\Psi,n,q'-q}$.

\item If $V\notin \mcal{V}_{\Psi,n,q}$, $V'\notin \mcal{V}_{\Psi,n,q'}$,
then $V\cup V'\notin \mcal{V}_{\Psi,n,q+q'}$.

\end{enumerate}
\end{lemma}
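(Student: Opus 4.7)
The plan is to observe that all six items are essentially a translation of the bushy tree combinatorics from Lemma \ref{schlem4} into the language of $\mcal{V}_{\Psi,n,q}$ and $\mcal{\t V}_{\Psi,n,q}$, using the intuition given in the paragraph preceding the lemma (small $q$ corresponds to large $V$ and vice versa, plus the natural additivity of ``measure''). So I would first prove item (1) directly from the definitions, then items (2) and (3) using Lemma \ref{schlem4}, and finally derive (4), (5), (6) by combining (1)--(3) via elementary set-theoretic manipulations.

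For item (1): unfolding, $V\in\mcal{V}_{\Psi,n,q}$ says the set $\{\sigma\in\t T:(\Psi^\sigma\uhr n)\downarrow\in V\}$ is $q$-big over $\eta$ (any finite $q$-big tree witnessing this sits inside $\t T$, and conversely a $q$-bushy tree's leaves form the required finite $q$-big set). Meanwhile $2^n\setminus V\notin\mcal{\t V}_{\Psi,n,q}$ says the same set, written as $\{\sigma\in\t T:(\Psi^\sigma\uhr n)\downarrow\in 2^n\setminus(2^n\setminus V)\}$, fails to be $q$-small, i.e.~is $q$-big. These are the same statement.

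For item (2): Let $B=\{\sigma\in\t T:(\Psi^\sigma\uhr n)\downarrow\in 2^n\setminus V\}$ and $C=\{\sigma\in\t T:(\Psi^\sigma\uhr n)\downarrow\in 2^n\setminus V'\}$; these are $q$-small and $q'$-small over $\eta$ respectively by hypothesis. The set $\{\sigma\in\t T:(\Psi^\sigma\uhr n)\downarrow\in 2^n\setminus(V\cap V')\}$ is contained in $B\cup C$, which is $(q+q')$-small by the contrapositive of Lemma \ref{schlem4}(1). For item (3): let $S\subseteq\t T$ be a finite $q'$-big set with $(\Psi^\sigma\uhr n)\downarrow\in V'$ for all $\sigma\in S$; then $S\cap\{\sigma:(\Psi^\sigma\uhr n)\downarrow\in 2^n\setminus V\}$ is contained in a $q$-small set, so by Lemma \ref{schlem4}(2) its complement in $S$ is $(q'-q)$-big and lies in $\{\sigma:(\Psi^\sigma\uhr n)\downarrow\in V\cap V'\}$.

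Items (4)--(6) now follow purely formally. For (4), by item (1) the hypothesis $V'\notin\mcal{V}_{\Psi,n,q'}$ gives $2^n\setminus V'\in\mcal{\t V}_{\Psi,n,q'}$, and then item (2) applied to $V$ and $2^n\setminus V'$ yields $V\setminus V'=V\cap(2^n\setminus V')\in\mcal{\t V}_{\Psi,n,q+q'}$. For (5), item (1) converts $V'\notin\mcal{\t V}_{\Psi,n,q'}$ into $2^n\setminus V'\in\mcal{V}_{\Psi,n,q'}$, and item (3) applied to $V$ and $2^n\setminus V'$ gives the conclusion. For (6), item (1) converts both hypotheses into $2^n\setminus V\in\mcal{\t V}_{\Psi,n,q}$ and $2^n\setminus V'\in\mcal{\t V}_{\Psi,n,q'}$; item (2) gives $2^n\setminus(V\cup V')=(2^n\setminus V)\cap(2^n\setminus V')\in\mcal{\t V}_{\Psi,n,q+q'}$, and one more application of item (1) delivers $V\cup V'\notin\mcal{V}_{\Psi,n,q+q'}$. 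There is no real obstacle here; the only mildly delicate point is being careful that the ``finite $q$-big set $S\subseteq\t T$'' appearing in the definition of $\mcal{V}_{\Psi,n,q}$ interacts correctly with the tree-based $q$-big/$q$-small notation from Lemma \ref{schlem4}, which is why item (1) should be stated and proved first so the remaining items can invoke it freely.
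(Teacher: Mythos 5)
Your proposal is correct and follows essentially the same route as the paper: item (1) from the definitions, items (2) and (3) by a direct translation into Lemma \ref{schlem4} applied to the sets $\{\sigma\in\t T:(\Psi^\sigma\uhr n)\downarrow\in 2^n\setminus V\}$ (resp.\ $V'$) and the witnessing finite big set, and items (4)--(6) by combining (1) with (2) or (3) via complementation inside $2^n$. The paper's proof is the same reduction, just with fewer words.
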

\begin{proof}
Item (1) follows from definition.

For item (2), let
\begin{align}
&S= \{\sigma\in \t{T}:
(\Psi^\sigma\uhr n)\downarrow\in 2^n\setminus V\},
\\ \nonumber
& S'=
\{\sigma\in \t{T}:
(\Psi^\sigma\uhr n)\downarrow\in 2^n\setminus V'\}.
\end{align}
Since $V\in \mcal{\t{V}}_{\Psi,n,q}$, $V'\in\mcal{\t{V}}_{\Psi,n,q'}$,
$S,S'$ are $q$-small, $q'$-small respectively.
 By Lemma \ref{schlem4}, $S\cup S'$ is
$(q+q')$-small over $\eta$.
But $$S\cup S' = \{\sigma\in \t{T}:
(\Psi^\sigma\uhr n)\downarrow\in 2^n\setminus (V'\cap V)\}.$$
 Compare with (\ref{scheq18})
 and   we are done.

For item (3), let
$S'\subseteq \t{T}$ be a finite $q'$-big over $\eta$ set
witnessing $V'\in \mcal{V}_{\Psi,n,q'}$.
Let $$S  = \{\sigma\in S': (\Psi^\sigma\uhr n)\downarrow\notin V\}.$$
Since $V\in \t{\mcal{V}}_{\Psi,n,q}$,
$S$ is $q$-small over $\eta$.
Thus by Lemma \ref{schlem4},
$S'\setminus S$ is $(q'-q)$-big over $\eta$. And clearly
for every $\sigma\in S'\setminus S$,
$(\Psi^\sigma\uhr n)\downarrow\in V\cap V'$.
Note that for this item, it's crucial that
for some $n$, $V,V'\subseteq 2^n$.

For item (4), note that $V'\notin \mcal{V}_{\Psi,n,q'}$
implies $2^n\setminus V'\in \t{\mcal{V}}_{\Psi,n,q'}$.
Thus by item (2), $V\cap (2^n\setminus V') = V\setminus V'\in
\t{\mcal{V}}_{\Psi,n,q+q'}$.
Item (5)(6) follows from item (3)(2) respectively in a similar fashion.

\end{proof}
Suppose $T$ is an infinite subtree
of $\t{T}$ with no leaf and with $\eta$ as its stem.
Firstly, it's easy to prove that if
there are not enough successor of $\eta$ that make
$\Psi(n)$ converge, then we can force it to diverge.
\begin{lemma}\label{schlem8}
Suppose $\t{T}\setminus T$ is $q$-small
over each $\sigma\in T$
and $T$ is $p$-bushy.
\begin{enumerate}
\item
Suppose $S$ is a subset of $\t{T}$ that is
$q'$-small over $\eta$  with $p>q'+q$, then
there exists an infinite subtree
$T'$ of $T$ with no leaf and with $\eta$ as its stem
such that $\t{T}\setminus T'$ is $(q+q')$-small
over every $\sigma\in T'$
and $[T']\cap [S] =\emptyset$.

\item
Suppose $\emptyset\in \t{\mcal{V}}_{\Psi,n,q''}$
with $p>q''$,
then there exists
an infinite subtree
$T'$ of $T$ with no leaf and with $\eta$ as its stem
such that $\t{T}\setminus T'$ is $(q+q'')$-small
over every $\sigma\in T'$
and for every $X\in [T]$,
$\Psi^X$ is not total.

\end{enumerate}
\end{lemma}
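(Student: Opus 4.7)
For item (1), the plan is to keep in $T$ exactly those nodes $\sigma$ along which $S$ has remained $q'$-small all the way up from $\eta$. Concretely, set
\[
T' = \bigl\{\sigma \in T : \eta \preceq \sigma \text{ and for every } \tau \in T \text{ with } \eta \preceq \tau \preceq \sigma,\ S \text{ is } q'\text{-small over } \tau\bigr\}.
\]
By hypothesis $\eta \in T'$, and $T'$ is closed under taking initial segments in $T$ down to $\eta$, so $T'$ is a subtree of $T$ with stem $\eta$. Any $\sigma \in T'$ satisfies $\sigma \notin S$ (else the one-node tree $\{\sigma\}$ would witness $S$ being $q'$-big over $\sigma$), so $[T'] \cap [S] = \emptyset$ will follow as soon as $T'$ is known to have infinite branches.

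The main technical claim is that $\t T \setminus T'$ is $(q+q')$-small over every $\sigma \in T'$; I will verify this by a double application of Lemma~\ref{schlem4}. Suppose toward contradiction that some $(q+q')$-bushy tree $U$ over $\sigma$ has $\ell(U) \subseteq (\t T \setminus T) \cup (T \setminus T')$. Since $\t T \setminus T$ is $q$-small over $\sigma$, Lemma~\ref{schlem4}(1) yields a $q'$-bushy tree $U^*$ over $\sigma$ with $\ell(U^*) \subseteq T \setminus T'$. For each leaf $\nu$ of $U^*$ pick the least $\mu_\nu$ on the chain $(\sigma,\nu]$ over which $S$ is $q'$-big; this exists because $\nu \notin T'$ while $\sigma \in T'$. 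Truncating each branch of $U^*$ at its $\mu_\nu$ preserves $q'$-bushiness above $\sigma$, and then grafting on top of each $\mu_\nu$ a $q'$-bushy tree with leaves in $S$ (guaranteed by $q'$-bigness of $S$ over $\mu_\nu$) produces a $q'$-bushy tree over $\sigma$ whose leaves lie entirely in $S$, contradicting $\sigma \in T'$. Once this smallness is in hand, the no-leaves property of $T'$ follows by counting immediate successors of $\sigma$: $T$ contributes at least $p(|\sigma|)$ such successors, while the $(q+q')$-small set $\t T \setminus T'$ contributes strictly fewer than $(q+q')(|\sigma|)$ of them at level $|\sigma|+1$, leaving at least $p - q - q' > 0$ immediate successors of $\sigma$ in $T'$.

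For item (2), set $S = \{\sigma \in \t T \cap [\eta]^{\preceq} : (\Psi^\sigma \uhr n)\downarrow\}$; unfolding $\emptyset \in \t{\mcal V}_{\Psi,n,q''}$ gives precisely that $S$ is $q''$-small over $\eta$. Item (1) applied with this $S$ and $q' = q''$ produces the desired $T'$. By the use principle of Turing functionals, if $\Psi^X \uhr n$ were to converge for some $X \in [T']$ then some prefix $\sigma \preceq X$ would lie in $S$; but $[T'] \cap [S] = \emptyset$, so no such prefix exists, and hence $\Psi^X$ is not total. The only delicate step throughout is the combinatorial gluing inside item~(1): one has to use repeatedly that $q'$-smallness at $\sigma$ can be transported up the tree only through a genuine bushy-tree combination, which is exactly what Lemma~\ref{schlem4} encapsulates.
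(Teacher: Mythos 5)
Your proof is correct and follows essentially the paper's argument: your $T'$ (nodes below which $S$ stays $q'$-small along the whole chain from $\eta$) coincides with the paper's $T'=\{\sigma\in T:[S]^\preceq\text{ is }q'\text{-small over }\sigma\}$ once the latter is read as the subtree generated from $\eta$, and your truncate-and-graft step merely makes explicit the concatenation that the paper compresses into ``therefore $T\setminus T'$ is $q'$-small over each $\sigma\in T'$''. Item (2) is derived from item (1) by the same specialization $S=\{\sigma\in\tilde T:(\Psi^\sigma\uhr n)\downarrow\}$ as in the paper.
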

\begin{proof}
For item (1). Consider the following subtree of
$T$:
\begin{align}\nonumber
T' = \big\{
\sigma\in T:
[S]^\preceq\text{ is  }q'\text{-small over }\sigma
\big\}.
\end{align}
It's obvious that $[T']\cap [S] =\emptyset$.
Clearly $\eta\in T'$ and for every $\sigma\in T'$,
$\sigma$ admit at least $p(|\sigma|)-q'(|\sigma|)-q(|\sigma|)$ many immediate successor in $T'$.
Thus $T'$ is $(p-q'-q)$-bushy over $\eta$.
Since $p>q'+q$, we have that $T'$ is infinite with no leaf.
Note that for every $\sigma\in T'$, less than
$q'(|\sigma|)$ many immediate successor of $\sigma$ in $T$
is contained in $T\setminus T'$.
Therefore $T\setminus T'$ is $q'$-small over each
$\sigma\in T'$. Meanwhile, $\t{T}\setminus T$ is
$q$-small over each $\sigma\in T'$.
Thus
by Lemma \ref{schlem4},
 $\t{T}\setminus T'$ is $(q+q')$-small over each $\sigma\in T'$.

 Item (2) follows from item (1) by setting
 $S$ to be $\{\sigma\in \t{T}: (\Psi^\sigma\uhr n)\downarrow\}$.
\end{proof}

Our core argument is the following Lemma \ref{schlemmain2}
which shows that we can thin a tree by a factor of
$j$ to make $j$ many Turing functionals
split.
Let $\t{T}_i,i<j$ be strong c.e. trees with
$\eta_i,i<j$ as their stem respectively.
Fix $j$ many Turing functionals $\{\Psi_i:i<j\}$.
For any computable function $q:\omega\rightarrow\mathbb{Q}$, let
\begin{align}\nonumber
&\mcal{\t{V}}^i_{n,q} =
\big\{
V\subseteq 2^n:
\text{The set } \{\sigma\in \t{T}_i:
(\Psi^\sigma\uhr n)\downarrow\in 2^n\setminus V\}
\text{ is }q\text{-small over }\eta_i
\big\}. \\ \nonumber
&\mcal{V}^i_{n,q} =
\big\{
V\subseteq 2^n: 2^n\setminus V\notin \mcal{\t{V}}^i_{n,q}
\big\}.
\end{align}
Suppose each $\t{T}_i$ admit an infinite subtree
$T_i$ such that $T_i$ has no leaf,
$\t{T}_i\setminus T_i$ is $q_i$-small over each $\sigma\in T_i$
and
$T_i$ is $p$-bushy over $\eta_i$
where $p,q_i$ are computable function from
$\omega$ to $\mathbb{Q}$.

\begin{lemma}\label{schlemmain2}
Let $q,q',q''$ be computable functions
from $\omega$ to $\mathbb{Q}$ such that $p>q+q'+q''$.
Suppose $V^*\in\mcal{\t{V}}_{n^*,q'}^i,\emptyset\notin \mcal{\t{V}}_{n,q'+2jq}^i$
 for all $i<j, n\in\omega$.
 Either of the following is true:
\begin{enumerate}
\item There exists
a sequence of sets $V_0,\cdots,
V_{j-1}\subseteq [V^*]^\preceq$ with $[V_i]^\preceq,i<j$
being mutually disjoint
such that for every $i<j$,
$V_i\in\mcal{V}^i_{n,q}$
for some $n$;

\item There exists a $i<j$, an
infinite tree $T\subseteq T_i$ with no leaf
with $\eta_i$ as its stem
such that
$\t{T}_i\setminus T$ is $(2jq+q'+q_i)$-small over every
$\sigma\in T$,
and there exists a finite $(2jq+q'+q'')$-big over $\eta_i$ set $S\subseteq \t{T}_i$
such that for
 every $X\in[T]\cap [S]$,
 $\Psi_i^X$ is not total;

\item There exists a $i<j$, an
infinite tree $T\subseteq T_i$ with no leaf
with $\eta_i$ as its stem
such that
$\t{T}_i\setminus T$ is $(2jq+q'+q''+q_i)$-small over every
$\sigma\in T$
and for
every $X\in [T]$,
if $\Psi_i^X$ is total, then
$\Psi_i^X$ is computable.

\end{enumerate}

\end{lemma}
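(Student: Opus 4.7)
The plan is to attempt a greedy construction of the splitting sets $V_0, V_1, \ldots, V_{j-1}$, and show that a failure of this construction at some stage $i$ can be converted into either case~(2) or case~(3) for the corresponding functional $\Psi_i$. At stage $i$, assuming $V_0, \ldots, V_{i-1}$ with pairwise disjoint cylinders have already been placed, I would search for a level $n \geq \max(n^*, n_0, \ldots, n_{i-1})$ and a set $V_i \subseteq 2^n \cap [V^*]^\preceq$, cylinder-disjoint from $\bigcup_{l<i}[V_l]^\preceq$, with $V_i \in \mcal{V}^i_{n,q}$. If this search succeeds for every $i < j$, we land in case~(1). If it fails at stage $i$ for every sufficiently large $n$, the maximal candidate $V_i := 2^n \cap [V^*]^\preceq \setminus \bigcup_{l<i}[V_l]^\preceq$ lies outside $\mcal{V}^i_{n,q}$, i.e.\ the set $\{\sigma \in \t{T}_i : \Psi_i^\sigma(n){\downarrow} \in V_i\}$ is $q$-small over $\eta_i$.

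Combining this failure with the hypotheses $V^* \in \mcal{\t{V}}^i_{n^*,q'}$ and $\emptyset \notin \mcal{\t{V}}^i_{n,q'+2jq}$ via Lemma~\ref{uemlem3} and two applications of Lemma~\ref{schlem4}(2), the set $\{\sigma : \Psi_i^\sigma(n) \in V^*_n \cap \bigcup_{l<i}[V_l]^\preceq\}$ is at least $(2j-1)q$-big over $\eta_i$. A repeated application of Lemma~\ref{schlem4}(1) to distribute this bigness across the $i \leq j-1$ disjoint cylinders $[V_l]^\preceq$ yields some $l^* < i$ for which $\{\sigma : \Psi_i^\sigma(n) \in [V_{l^*}]^\preceq\}$ is at least $(2j-1)q/(j-1)$-big, which strictly dominates $q$. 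Thus $\Psi_i$ is effectively confined to the small cylinder $[V_{l^*}]^\preceq$ on a $q$-bushy portion of $\t{T}_i$. I would then run an inner Kumabe--Lewis style dichotomy inside this confinement: either $\Psi_i$ diverges on a further subtree of the confinement zone, in which case Lemma~\ref{schlem8} produces the finite $(2jq+q'+q'')$-big witness set $S$ demanded by case~(2), or $\Psi_i^X$ on some $\h{p}$-bushy subtree is pinned to a single value at every level (because any further splitting inside $[V_{l^*}]^\preceq$ would have been found by the greedy search), and therefore, whenever total, is computable, delivering case~(3).

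The central obstacle is the precise bookkeeping of bushiness parameters: the advertised improvement from Khan--Miller's $2^j$ factor to $2j$ rests entirely on the pigeonhole above being linear rather than exponential in $j$. Distributing $(2j-1)q$-bigness over $j-1$ cylinders yields a piece strictly larger than $q$ in exactly one of them, which is enough to localize the obstruction to a single $V_{l^*}$; Khan--Miller instead tracked which \emph{subset} of the $j$ functionals admitted a split, incurring the exponential price. Threading the exact small-ness parameters $2jq + q' + q''$ in case~(2) and $2jq+q'+q''+q_i$ in case~(3) through Lemmas~\ref{schlem4}, \ref{uemlem3}, and~\ref{schlem8} while respecting $p > q + q' + q''$ is the most delicate accounting step, and ensuring that the inner dichotomy preserves the infinite-subtree and strong-c.e.\ structure of $\t{T}_i$ (rather than only finite bushy witnesses) is where one must be most careful.
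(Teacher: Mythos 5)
Your greedy front end is plausible, and the pigeonhole accounting in the middle is essentially correct: from $V^*\in\mcal{\t{V}}^i_{n^*,q'}$, $\emptyset\notin\mcal{\t{V}}^i_{n,q'+2jq}$, and the failure $V^*_n\setminus\bigcup_{l<i}[V_l]^\preceq\notin\mcal{V}^i_{n,q}$, Lemma \ref{uemlem3}(5) does give $V^*_n\cap\bigcup_{l<i}[V_l]^\preceq\in\mcal{V}^i_{n,(2j-1)q}$, and Lemma \ref{schlem4}(1) then localises $\Psi_i$ to some $V_{l^*}$ with surplus bushiness. The difficulty comes after that localisation, and your outline for the ``inner Kumabe--Lewis dichotomy'' has a real gap.

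You justify the case-(3) branch by saying that ``any further splitting inside $[V_{l^*}]^\preceq$ would have been found by the greedy search.'' That is not true: the greedy search at stage $i$ only ever looked for a set $V_i\in\mcal{V}^i_{n,q}$ that is \emph{cylinder-disjoint from $\bigcup_{l<i}[V_l]^\preceq$}. A split of $\Psi_i$ living entirely inside $[V_{l^*}]^\preceq$ is invisible to that search, so its failure gives you no information whatsoever about whether $\Psi_i$ admits rich splitting inside the cylinder you have pigeonholed it into. Consequently you have no mechanism to conclude that $\Psi_i^X$, when total, is computable. Confinement to a single cylinder $[V_{l^*}]^\preceq$ is far from confinement to a single path, and nothing in the greedy structure lets you refine the former to the latter. (Also note that even if you tried to recurse inside $[V_{l^*}]^\preceq$ with the pair $\Psi_i,\Psi_{l^*}$, you would just be asking the same question again, with no decreasing measure of progress.)

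The paper resolves exactly this with a different decomposition. Rather than greedily allocating cylinders, it isolates the singleton obstructions $W^i_{n,q}=\{\rho\in V^*_n:\{\rho\}\in\mcal{V}^i_{n,q}\}$. Case~1 (the union of these is still easily avoidable by some $\Psi_{\t{i}}$) hands a minimal big set $\h{V}$ outside all singletons to one functional, degrades $q'$ by $2q$, and recurses on $j-1$. Case~2 (some $|W^i_{n,q}|$ is unbounded in $n$) assigns each offending $\Psi_i$ its own distinct singleton $\{\rho_i\}$ and recurses on what remains. Case~3 (all $W^i_{n,q}$ have bounded cardinality) shows $W_q=\bigcup_n W_{n,q}$ is a \emph{computable tree of bounded width}, so every path of $[W_q]$ is computable; one then forces $\Psi_i^G\in[W_q]$ on a big subtree (item~(3)), and if the $\Sigma_1$ escape set $\h{S}$ is big, Lemma \ref{schlem8} yields item~(2). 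Case~3 is precisely the step your proposal is missing, and it depends on the singleton bookkeeping which your greedy scheme never tracks. The linear factor $2j$ in the paper comes from accumulating $2q$ per recursion step and from the $|B|q$ allocation in Case~2 --- your pigeonhole is not where that improvement lives.
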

\begin{remark}
The intuition of this Lemma is the following.
For $V\in \mcal{V}^i_{n,q}$, we can see $q$
as a measure of $V$ assigned by $\Psi_i$.
Meanwhile, it's trivial to verify that
for $j$ many continuous positive measure $\mu_0,\cdots,\mu_{j-1}$
on a space $S$, there exists a partition of $S$
into $k$ disjoint pieces, namely $S_0,\cdots,S_{j-1}$
such that $\mu_i(S_i)\geq 1/k$.
Here a measure is  continuous means it does not
assign non zero measure on singleton.
In our application, $V^*=\bot$, $q'\equiv 0$,
 $q,q''$ are mush smaller compared
to $p$ and much larger than $q_i$.
The hypothesis $\emptyset\notin \mcal{\t{V}}_{n,q'+2jq}^i$
simply means there are  many strings $\sigma$
(a measure of more than $q'+2jq$)
such that $(\Psi^\sigma_i\uhr n)\downarrow$.

\end{remark}
\begin{proof}
We prove by induction on $j$.
For $j=1$, $V^*\in \mcal{\t{V}}_{n^*,q'}^0,
\emptyset\notin \mcal{\t{V}}^0_{n^*,q+q'}$ implies
by Lemma \ref{uemlem3} item (5) that
$V^*\in \mcal{V}^0_{n^*,q}$. Thus the conclusion follows when
$j=1$.
 Now assume that
conclusion holds for $j-1$
by fulfilling item (1).
For every $n>n^*,i<j$,
let $V^*_n = [V^*]^\preceq\cap 2^n$.
As said in the remark, the major trouble maker is
singletons, therefore
let
 $$W^i_{n,q} =\{\rho\in V^*_n: \{\rho\}\in\mcal{V}^i_{n,q}\}.$$
 The key part is Case 1 where not so much measure
 is put on singletons. i.e., for some sufficiently
 big set $S$, some $\t{i}$, $\Psi_{\t{i}}^\sigma\uhr n\notin \cup_{i<j}W^i_{n,q}$
 for all $\sigma\in S$.

\ \\

\textbf{Case 1}.
For some $\t{i}<j$, $n>n^*$, $\cup_{i<j}W^i_{n,q}\notin \mcal{\t{V}}^{\t{i}}_{n,q+q'}$.

Intuitively, the hypothesis of this case says
that the measure  of $2^n\setminus (\cup_{i<j}W^i_{n,q}) $
is larger than $q+q'$.
In this case we
locate a $\h{V}$   so that every proper
subset of $\h{V}$
can be very easily avoided by all $\Psi_i$ and for some $\h i$,
$\h{V}$ can not be as easily avoided by $\Psi_{\h{i}}$
as its proper subset.
We argue that  by Lemma \ref{uemlem3},
$\h{V}$ can  be avoided by all $\Psi_i$. Thus we force $\Psi_{\h{i}}^G\in [\h{V}]$
while we force $\Psi_i^G\notin [\h{V}]$ for all $i\ne \h i$. And the conclusion follows
 by induction.

Since $V^*\in \mcal{\t{V}}^{\t{i}}_{n^*,q'}$
(the measure of $2^{n^*}\setminus V^*$ is smaller than $q'$),
we have $V^*_n\in \mcal{\t{V}}^{\t{i}}_{n,q'}$ for all $n>n^*$.
Therefore, the hypothesis of Case 1 implies,
the measure of $(2^n\setminus \cup_{i<j}W^i_{n,q})
\setminus (2^n\setminus V^*_n)$
is larger than  $(q+q')-q' = q$.
More precisely,
by Lemma \ref{uemlem3} item (5), $$V^*_n\setminus(\cup_{i<j}W^{i}_{n,q})\in \mcal{V}^{\t{i}}_{n,q}.$$
Consider the collection of such sets
$$V\subseteq  V^*_n\setminus(\cup_{i<j}W^{i}_{n,q})
\text{ that for some }\h{i}<j, V\in \mcal{V}^{\h{i}}_{n,q}.$$
Let
 $\h{V}\subseteq V^*_n\setminus(\cup_{i<j}W^{i}_{n,q})$ be the minimal
 (in the sense of subset)
 among this collection.
i.e.,
$$\text{ there is no }V'\subsetneq \h{V}\text{ such that }
\exists i'<j [V'\in \mcal{V}^{i'}_{n,q}].$$
Suppose $V\in \mcal{V}^{\h{i}}_{n,q}$. It is not necessary that $\h{i}=\t{i}$.

We  show that $V^*_n\setminus \h{V}\in\mcal{\t{V}}^{i}_{n,q'+2q}$ for all $i<j$.
This is because every proper subset of $\h{V}$
can be easily avoided by minimality of $\h{V}$.
So, simply choose two subsets of $\h V$ and apply Lemma
\ref{uemlem3} item (6) to avoid both of them.
Let $\rho\in \h{V}$ be arbitrary (which clearly exists since
$\h{V}\in \mcal{V}^{\h{i}}_{n,q}$) and let $i<j$.
Since $\rho\notin \cup_{i<j}W^{i}_{n,q}$, so
$\{\rho\}\notin \mcal{V}^i_{n,q}$.
Since $\h{V}$ is minimal, so $\h{V}\setminus\{\rho\}\notin
\mcal{V}^i_{n,q}$. Thus by Lemma \ref{uemlem3} item (6),
$\h{V}\notin \mcal{V}^{i}_{n,2q}$. This implies, by Lemma
\ref{uemlem3} item (4), $V^*_n\setminus \h{V}\in \mcal{\t{V}}^{i}_{n,q'+2q}$.

Now the conclusion follows by induction where $V^*,q'$ are
reset to be $V^*_n\setminus \h{V}$ and $q'+2q$ respectively,
 and $j$ is reduced to $j-1$. In the end, if item (1) holds,
 then $\h{V}$ will be the set corresponding to $\h{i}$.

 \ \\

\textbf{Case 2}. There exists a $i<j$ such that
$|W^i_{n,q}|$ is not bounded with respect to  $n$.

Let $B = \{i<j:|W^i_{n,q}|\text{ is not bounded with respect to }n \}$.
It is clear that for  some sufficiently large $n$,
there exists a $\rho_i\in W^i_{n,q}$ for each $i\in B$
such that $\rho_i,i\in B$ are mutually different and
$\rho_i\notin W^{\h{i}}_{n,q}$ for all $\h{i}\notin B$.
Let $W =\{\rho_i: i\in B\}$.
Since
each $\rho_i$ has measure smaller than $q$ with respect to $\Psi_{\h i}$)
therefore $(2^n\setminus V^*_n)\cup W$
has measure smaller than $q$ with respect to $\Psi_{\h i}$ when $\h i\notin B$.

More precisely, since $\{\rho_i\}\notin \mcal{V}^{\h{i}}_{n,q}$
for all $i\in B,\h{i} \in j\setminus B$,
by Lemma \ref{uemlem3}
item (4),
$V^*_n\setminus W\in \mcal{\t{V}}^{\h{i}}_{n,q'+|B|q}$ for all
$\h{i}\notin B$. Thus the conclusion follows by induction
where $j$ is reduced to $j-|B|$ and
$V^*,q'$ are reset to be $V^*_n\setminus W$, $q'+|B|q$ respectively.
In case item (1) holds,
the set corresponding to $\Psi_i$ with $i\in B$ will be
$\{\rho_i\}$.

\ \\

\textbf{Case 3}. Otherwise.

Let $W_{n,q} = \cup_{i<j}W^i_{n,q}$
and $W_q = \cup_n W_{n,q}$.
It is clear that $W_{n,q}\subseteq [W_{n-1,q}]^\preceq$.
Therefore $W_q$ is a tree (in $2^{\geq n^*}$).
Moreover, since it is not Case 2,
for some $u$ and $\overline{n}>n^*$, we have $|W_{n,q}|
=u$ for all $n>\overline{n}$.
By compactness,
$W_{n,q}$ is c.e., therefore
$W_q$ is a c.e. tree. Since $|W_{n,q}|= u$ for all $n>\overline{n}$,
$W_q$ is a computable tree and every element in $[W_q]$ is computable.
Fix an $i$, to force  $\Psi^G$ to be a member of $[W_q]$ (fulfilling item (3)),
consider the set on which $\Psi^G\notin [W_q]$:
 $$\h{S} = \{\sigma\in \t{T}_i:
(\Psi_i^\sigma\uhr n)\downarrow \notin W_{n,q}
\text{ for some }n>\overline{n}\}.$$
Note that for every $n$, since it is not Case 1,
$\Psi_i$ can easily avoid $2^n\setminus W_{n,q}$.
But it doesn't mean $\Psi_i$ can easily avoid $\overline{W}_q$.
i.e., $\h{S}$ might not be very small.
We show that if we can not easily avoid $\overline{W}_q$,
then we can force $\Psi_i$ to be non total.

If $\h{S}$ is $(q+q'+q'')$-small over $\eta$,
then by Lemma \ref{schlem8}, there exists
an
infinite tree $T\subseteq T_i$ with no leaf
with $\eta_i$ as its stem
such that
$$\t{T}_i\setminus T\text{ is }(q+q'+q''+q_i)\text{-small over every }
\sigma\in T$$
and $[T]\cap [\h{S}]=\emptyset$,
which means for
every $X\in [T]$, every $n>\overline{n}$,
 $$(\Psi_i^X\uhr n)\downarrow
 \rightarrow \Psi_i^X\uhr n \in W_{n,q}.$$
Then we are done since for
  every $X\in [T]$,
$\Psi_i^X$ is total implies $\Psi_i^X\in [W_q]$ and is therefore
computable, which fulfills
 item (3) of this lemma.

If $\h{S}$ is $(q+q'+q'')$-big over $\eta$,
by definition of ``bigness", there exists a finite
 subset
$S$ of $\h{S}$ that is $(q+q'+q'')$-big
over $\eta$.
Let $\h{n}$ be sufficiently large, say,
$$\h{n} >  \max\{n: \text{ for some }\sigma\in S,
(\Psi_i^\sigma\uhr n)\downarrow\notin W_{n,q}\}.$$
Since it is not Case 1,
$W_{\h{n},q}\in \t{\mcal{V}}^i_{\h{n},q+q'}$ for all $i<j$.
By Lemma \ref{schlem8},
there exists  an infinite
subtree $T$ of $T_i$ with no leaf and
with $\eta$ as its stem
such that
\begin{align}\nonumber
&\t{T}_i\setminus T\text{ is
}
(q+q'+q_i)\text{-small over every }\sigma\in T
\text{ and }\\ \nonumber
&(\Psi_i^\sigma\uhr \h{n})\downarrow\rightarrow
\Psi_i^\sigma\uhr \h{n}\in W_{\h{n},q}\text{ for all }\sigma\in T.
\end{align}
We show that $S,T$ is the desired pair fulfilling
conclusion (2).
Let $X\in [T]\cap [S]$.
By definition of $S$,
$(\Psi^X\uhr n)\notin W_{n,q}$ for some $n<\h{n}$,
which means,
$$(\Psi^X\uhr \h{n})\downarrow\rightarrow
(\Psi^X\uhr \h{n})\notin W_{\h{n},q}$$
since $W_{n,q}\subseteq [W_{n-1,q}]^\preceq$.
But by definition of $T$,
$$(\Psi^X\uhr \h{n})\downarrow\rightarrow
(\Psi^X\uhr \h{n})\in W_{\h{n},q}.$$
Thus $\Psi^X$ can not be total.

\end{proof}

Fix a Turing functional $\Psi$
and a condition $(\eta,\t{T},T,p_T,q_T)$.
Suppose  $\h{p}:\omega\rightarrow \mathbb{Q}$ is
such a computable function that
 $$\lim\limits_{n\rightarrow \infty}
 \dfrac{p_T(n)}{\max\{\h{p}(n),1\}\cdot|\t{T}\cap \omega^n|^k}=\infty,
  \lim\limits_{n\rightarrow \infty}
 \dfrac{\h{p}(n)}{\max\{q_T(n),1\}\cdot|\t{T}\cap \omega^n|^k}=\infty$$
 for all $k$ (which means both
 $(\h{p},q_T)$ and $(p_T,\h{p})$ satisfy item (5) of
 the definition of condition)
 and for every $n\geq |\eta|$,
 $$p_T(n)>4\h{p}(n)\cdot |\t{T}\cap \omega^n|>16q_T(n)\cdot|\t{T}\cap \omega^n|.$$
 For convenience,
we transform Lemma \ref{schlemmain2}
into the following:

\begin{lemma}\label{schlem9}
Either there exists an extension of
$(\eta,\t{T},T,p_T,q_T)$ forcing $\Psi^G$
to be computable or non total,
or for any $m$, any
 set $B=\{\eta_i\}_{i<j}\subseteq T\cap \omega^m$,
 there exists   for each $i\in B$
 a finite 
 tree $T'_i\subseteq \t{T}$
 with $\eta_i$ being its stem
  such that $T'_i$ is $\h{p}/2$-bushy
 over $\eta_i$ and for every $\sigma\in \ell(T'_i)$,
  $(\Psi^\sigma\uhr n)\downarrow\in V_i$ for some $n$
 where $[V_i]^\preceq,i\in B$ are mutually disjoint.

\end{lemma}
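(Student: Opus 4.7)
I plan to derive Lemma~\ref{schlem9} from the core combinatorial Lemma~\ref{schlemmain2}, applied to the family of subtrees $\t{T}_i := \t{T} \cap [\eta_i]^\preceq$ and $T_i := T \cap [\eta_i]^\preceq$ for $\eta_i \in B$, all paired with the single functional $\Psi_i := \Psi$. Since each $\eta_i \in T$, each $T_i$ is an infinite leafless tree that is $p_T$-bushy over $\eta_i$, and $\t{T}_i \setminus T_i$ is $q_T$-small over each $\sigma \in T_i$; thus the hypotheses of Lemma~\ref{schlemmain2} on the trees are met with $p = p_T$ and $q_i = q_T$. I choose $V^* = \{\bot\}$ (so $n^* = 0$), $q' = 0$, $q = \h{p}/2$, and $q'' = 2q_T$; the growth relations between $p_T$, $\h{p}$, $q_T$ stated just before Lemma~\ref{schlem9} guarantee $p_T > q + q' + q''$ at all relevant levels.

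Suppose the second alternative of Lemma~\ref{schlem9} fails, and fix witnesses $m$ and $B = \{\eta_i\}_{i<j} \subseteq T \cap \omega^m$. I first dispose of the boundary case: if the hypothesis $\emptyset \notin \mcal{\t{V}}^i_{n, j\h{p}}$ of Lemma~\ref{schlemmain2} fails for some $i<j$ and $n$, then $\{\sigma \in \t{T}_i : \Psi^\sigma(n)\downarrow\}$ is $j\h{p}$-small over $\eta_i$; Lemma~\ref{schlem8}(2) applied to the sub-condition rooted at $\eta_i$ then supplies an infinite leafless subtree $T^\flat \subseteq T_i$ with $\t{T}_i \setminus T^\flat$ being $(q_T + j\h{p})$-small and $\Psi^G$ non-total for every $G \in [T^\flat]$. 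The tuple $(\eta_i, \t{T}_i, T^\flat, p_T - q_T - j\h{p}, q_T + j\h{p})$ is then the desired extension, with the growth condition preserved by the hypothesis on $\h{p}$.

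So I may assume the hypothesis of Lemma~\ref{schlemmain2} holds and invoke it. If its item~(1) yields mutually disjoint $[V_i]^\preceq$ with $V_i \in \mcal{V}^i_{n, \h{p}/2}$, the witness is, by unfolding the definition of $\h{p}/2$-bigness, precisely a finite $\h{p}/2$-bushy tree $T'_i$ with stem $\eta_i$ whose leaves $\sigma$ satisfy $\Psi^\sigma\uhr n \in V_i$; this is exactly the second alternative of Lemma~\ref{schlem9}, contradicting the choice of $B$. Hence item~(2) or~(3) holds for some $i<j$. Item~(3) directly yields the extension $(\eta_i, \t{T}_i, T^\sharp, p_T - j\h{p} - q'' - q_T, j\h{p} + q'' + q_T)$, forcing $\Psi^G$ computable-or-non-total on $[T^\sharp]$. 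In item~(2), I additionally have a finite $(j\h{p}+q'')$-big set $S \subseteq \t{T}_i$ with $[T^\sharp] \cap [S]$ forcing $\Psi^G$ non-total; Lemma~\ref{schlem4}(2) applied to $S$ and $S \cap (\t{T}_i \setminus T^\sharp)$ (which is $(j\h{p}+q_T)$-small) shows $S \cap T^\sharp$ is $(q''-q_T) = q_T$-big over $\eta_i$, hence non-empty, so any $\sigma \in S \cap T^\sharp$ serves as a new stem for the extension $(\sigma, \t{T}_i \cap [\sigma]^\preceq, T^\sharp \cap [\sigma]^\preceq, \ldots)$, every $G$ in which lies in $[\sigma] \subseteq [S]$ and in $[T^\sharp]$.

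The main obstacle is the parameter bookkeeping: simultaneously ensuring the hypothesis $p > q + q' + q''$ of Lemma~\ref{schlemmain2}, extracting a tree of exactly the target bushiness $\h{p}/2$ from its item~(1), and maintaining $S \cap T^\sharp \ne \emptyset$ in item~(2)—for which $q'' > q_T$ is crucial. The growth hypotheses on $(p_T, \h{p}, q_T)$ asserted immediately before the lemma are precisely calibrated to make all the resulting inequalities compatible at sufficiently large levels and to keep each extended tuple a valid condition in the sense of definition item~(5).
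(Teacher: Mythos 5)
Your proposal is correct and follows essentially the same route as the paper: fix $B$, set up $\t{T}_i = \t{T}\cap[\eta_i]^\preceq$, $T_i = T\cap[\eta_i]^\preceq$, handle the boundary case $\emptyset\in\mcal{\t V}^i_{n,\cdot}$ via Lemma~\ref{schlem8}(2), then invoke Lemma~\ref{schlemmain2}, with item~(1) giving the trees and items~(2)(3) giving extensions. The only genuine differences are small parameter choices: the paper takes $q=\h p$, $q''=q_T$ and then trims the $\h p$-big witness of item~(1) down to a $\h p/2$-bushy tree using $q_T<\h p/2$; you take $q=\h p/2$, $q''=2q_T$, which reads off the $\h p/2$-bushy tree directly from item~(1), and in item~(2) makes $S\cap T^\sharp\ne\emptyset$ into a clean Lemma~\ref{schlem4}(2) subtraction (the paper uses a containment-contradiction argument with $q''=q_T$). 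Both parameter regimes verify $p_T>q+q'+q''$ from the standing hypotheses on $\h p$. You are also somewhat more careful than the paper about tracking the bushiness function of the new core tree in the extensions (the paper writes $p_T$ where $p_T-q_T-2j\h p$ would be accurate), though this does not affect validity since the growth condition in item~(5) of the definition of condition still holds. One cosmetic slip: in the boundary case you write $\Psi^\sigma(n)\downarrow$ where $(\Psi^\sigma\uhr n)\downarrow$ is meant, matching the definition of $\mcal{\t V}^i_{n,q}$.
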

\begin{proof}

Fix a $B = \{\eta_i\}_{i<j}\subseteq T\cap \omega^m$.
Let $\t{T}_i=\t{T}\cap [\eta_i]^\preceq$.
Note that $$p_T(n)-2j\h{p}(n)-q_T(n)>1$$ for all $n\geq |\eta|$ since
$p_T(n)>4\h{p}(n)\cdot |\t{T}\cap \omega^n|$.
Therefore, if for some $i<j$, $n\in\omega$,
$\emptyset\in \t{\mcal{V}}^i_{n,2j\h{p}}$,
then by Lemma \ref{schlem8},
 there exists an infinite subtree
$T'$ of $T_i$ with no leaf and with $\eta_i$ as its stem
such that $\t{T}_i\setminus T'$ is $(q_T+2j\h{p})$-small
over every $\sigma\in T'$
and for every $X\in [T']$,
$\Psi^X$ is not total.
Thus $(\eta_i,\t{T}_i,T',p_T, q_T+2j\h{p})$
is the desired extension.

Suppose this is not the case.
  Apply Lemma \ref{schlemmain2}
with
\begin{align}
&\Psi_i = \Psi^{\eta_i},
\t{T}_i = \t{T}\cap [\eta_i]^\preceq,
T_i = T\cap [\eta_i]^\preceq,
\\ \nonumber
&n^*=0, V^*=\{\bot\}, q'\equiv 0,
q=\h{p}, q_i= q_T, q'' = q_T,
\end{align} we have that one of the three items
 holds.

 If item (1) of Lemma \ref{schlemmain2} holds,
 it means that there exists finite set
 $S_i\subseteq \t{T}_i$
 for each $i<j$,
  with $S_i$ being $\h{p}$-big over
 $\eta_i$ respectively, such that
  for every $\sigma\in S_i$,
  $(\Psi^{\sigma}\uhr n)\downarrow\in V_i$
  for some $n$ where $[V_i]^\preceq,i<j$
  are mutually disjoint.
  Since $\t{T}_i\setminus T_i$ is $q_T$-small over
  every $\sigma\in T_i$ with $q_T<\h{p}/2$,
  there exists  for each $i<j$ a finite 
  tree $T'_i\subseteq \t{T}_i$ with $\eta_i$ as its stem
  such that $T'_i$ is $\h{p}/2$-bushy
and for every $\sigma\in \ell(T'_i)$,
there exists $n$ such that
 $(\Psi^\sigma\uhr n)\downarrow\in V_i$.
 Clearly such $T'_i,i<j$ are computable (if exists)
 uniformly in $B,\t{T},p_T,q_T,\h{p}$.
 Thus we are done for this case.

 If item (2) of Lemma \ref{schlemmain2} holds, it means
 that for some $i<j$,  there exists an
infinite tree $T'\subseteq T_i$ with no leaf
with $\eta_i$ as its stem
such that
$\t{T}_i\setminus T'$ is $(2j\h{p}+q_T)$-small over every
$\sigma\in T'$,
and there exists a finite $(2j\h{p}+q_T)$-big over $\eta_i$ set $S\subseteq \t{T}_i$
such that for
 every $X\in[T']\cap [S]$,
 $\Psi_i^X$ is not total.
 But $\t{T}_i\setminus T'$ is $(2j\h{p}+q_T)$-small over
 $\eta_i$ and
 $S$ is
 $(2j\h{p}+q_T)$-big over
 $\eta_i$. This implies that $S\cap T'\ne\emptyset$.
 Suppose $\eta'\in S\cap T'$.
 Clearly the condition
 $$(\eta',\t{T}_i\cap [\eta']^\preceq, T'\cap [\eta']^\preceq,
 p_T,2j\h{p}+q_T)$$ is a desired
 extension of $(\eta,\t{T},T,p_T,q_T)$ that forces
 $\mcal{R}_\Psi$.

 If item (3) of Lemma \ref{schlemmain2} holds, the conclusion follows similarly. Thus we are done.

\end{proof}

Now it's easy to prove the following:
\begin{lemma}\label{schlem10}
Every condition admit an extension forcing
$\mcal{R}_\Psi$.

\end{lemma}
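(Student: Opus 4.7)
The plan is to apply Lemma~\ref{schlem9} to the given condition: either its first alternative directly yields an extension forcing $\Psi^G$ to be computable or non-total, or its second alternative supplies splitting data uniformly in $B$, which we iterate to build a bushy $T^*\subseteq T$ on which $G\leq_T\Psi^G$.

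Fix a computable $\h{p}$ satisfying the hypotheses preceding Lemma~\ref{schlem9} for our condition, and assume the second alternative holds (otherwise we are done). I will build finite trees $T^*_0\subseteq T^*_1\subseteq\cdots$ inside $T$ with $T^*_0=\{\eta\}$. At stage $s$, first extend each leaf of $T^*_s$ to a common level $m_s$ via a short $(\h{p}/2-q_T)$-bushy detour inside $T$ (possible because $T$ is $p_T$-bushy with $p_T\gg\h{p}$), so that the new leaf set $B_s$ lies in $T\cap\omega^{m_s}$. Feed $B_s$ into Lemma~\ref{schlem9}: for each $\sigma\in B_s$ we obtain a finite $\h{p}/2$-bushy tree $T'_\sigma\subseteq\t{T}$ above $\sigma$ whose leaves $\tau$ satisfy $(\Psi^\tau\uhr n_\sigma)\downarrow\in V_\sigma$, with $[V_\sigma]^\preceq$ mutually disjoint as $\sigma$ varies over $B_s$. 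Since $\t{T}\setminus T$ is $q_T$-small over every node of $T$, Lemma~\ref{schlem4}(2) refines each $T'_\sigma$ to a $(\h{p}/2-q_T)$-bushy subtree $T''_\sigma$ with $\ell(T''_\sigma)\subseteq T$ and the $V_\sigma$-memberships preserved. Set $T^*_{s+1}=T^*_s\cup\bigcup_{\sigma\in B_s}T''_\sigma$ and $T^*=\bigcup_s T^*_s$.

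Then $T^*$ is strong c.e., contained in $T$, has no leaves, and is $(\h{p}/2-q_T)$-bushy over $\eta$. For any $G\in[T^*]$, the prefix of $G$ in $B_s$ is uniquely identified by which $V_\sigma$ contains a sufficiently long initial segment of $\Psi^G$, by mutual disjointness; iterating in $s$ this reads $G$ off of $\Psi^G$, so $G\leq_T\Psi^G$ and $\mcal{R}_\Psi$ is forced. I would package the extension as $(\eta,T^*,T^*,\h{p}/2-q_T,1)$: taking $\t{T}^*=T^*$ makes $\t{T}^*\setminus T^*=\emptyset$ trivially small over every node, and clause~(5) of the definition of a condition reduces to $\h{p}(n)/|T^*\cap\omega^n|^k\to\infty$, which follows from $|T^*\cap\omega^n|\le|\t{T}\cap\omega^n|$ and the growth chosen for $\h{p}$.

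The main obstacle will be the level-synchronization step at each stage: bringing $B_s$ into a common level of $T$ without eroding bushiness. This is exactly why the hypotheses preceding Lemma~\ref{schlem9} reserve a large margin between $p_T$, $\h{p}$, and $q_T$, so that a short bushy detour inside $T$ absorbs the synchronization and leaves the residual parameters $\h{p}/2-q_T$ and $1$ still satisfying the growth clause of the extended condition.
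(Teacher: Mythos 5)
Your overall template (iterate Lemma~\ref{schlem9} to grow a $\Psi$-splitting tree, then read $G$ off $\Psi^G$ using the mutual disjointness of the $V_\sigma$) is the right idea, but there is a fatal effectivity gap that you never address and that occupies most of the paper's actual proof.

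Recall what the paper explicitly emphasizes after the definition of condition: \emph{there is no complexity restriction on} $T$. It is just some arbitrary subtree of $\t{T}$ (this is forced on us because the result is relativized to an arbitrary oracle $X$: in the initial condition, $T$ is the $\DNR^X$ tree, which is generally noncomputable). Your construction repeatedly requires effective access to $T$: you ``extend each leaf of $T^*_s$ to a common level $m_s$ via a short $(\h{p}/2-q_T)$-bushy detour \emph{inside $T$},'' you choose $B_s\subseteq T\cap\omega^{m_s}$, and you refine each $T'_\sigma$ to a subtree with $\ell(T''_\sigma)\subseteq T$. None of these steps can be carried out by a computable procedure, because we cannot decide membership in $T$. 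Consequently $T^*=\bigcup_s T^*_s$ is \emph{not} strong c.e.\ (nor even computably bounded computable in any useful sense), and the tuple $(\eta,T^*,T^*,\h{p}/2-q_T,1)$ fails clause~(1) of the definition of a condition. The claim ``$T^*$ is strong c.e.'' is asserted without support and is false under your construction.

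The paper resolves this by never trying to stay inside $T$ directly. Instead it builds a \emph{computable} strong c.e.\ tree $\t{T}'$ by a purely finitary search: it tracks the computable finite sets $A_n$ of stabilized nodes, waits for some $\sigma\in A_n$ and some candidate subset $B_\sigma$ of the current leaves over $\sigma$ (with small complement) together with splitting trees $T'_i\subseteq\t{T}$ above the $\eta_i\in B_\sigma$ --- all of which is an effectively enumerable condition --- and when the search succeeds, it \emph{declares the non-chosen leaves $S_\sigma\setminus B_\sigma$ to be dead ends} of $\t{T}'$. Lemma~\ref{schlem9} guarantees the search terminates because for $\sigma\in T$ one valid candidate $B_\sigma$ is (a large subset of) $S_\sigma\cap T$, even though the procedure cannot recognize it as such. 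The new noncomputable core is then set to $T'=T\cap\{\rho\in\t{T}':[\rho]\cap[\t{T}']\ne\emptyset\}$, and the substantial work of the proof (Claims~\ref{schclaim2} and~\ref{schclaim1}) is showing that the declared dead ends stay $q_T$-small over every $\rho\in T'$, so that $\t{T}'\setminus T'$ is $2q_T$-small and $T'$ remains bushy. Your proposal has no analog of the dead-end bookkeeping or of these two claims, which are precisely where the difficulty lies; the ``level-synchronization'' issue you flag as the main obstacle is in fact handled in the paper by a one-line device (requiring the $T'_i$ to be \heighthomogeneous) and is not the crux.
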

\begin{proof}
Fix a condition
$(\eta,\t{T},T,p_T,q_T)$.
Without loss of generality suppose
$|\eta|$ is sufficiently large
so that the function $\h{p}$ defined before
Lemma \ref{schlem9} exists
(otherwise extend $\eta$ to be so).
Suppose   there is  no extension
of $(\eta,\t{T},T,p_T,q_T)$
forcing $\Psi^G$ to be computable or non total.
We construct a strong c.e. tree
by  inductively define a sequence of finite trees $T_n,n\in\omega$
so that $\Psi$ split on the tree. Which ensure that
 for every infinite path $G$ of the tree, $G\leq_T \Psi^G$.
Let $T_0=\{\eta\}$,
$T_1 = \t{T}\cap \omega^{|\eta|+1}$.
Suppose we have defined $T_0,\cdots,T_n$.
Let $A_n $ denote the set of string $\sigma$
such that at some step before $n$, some nodes have grown above
 $\sigma$ and after that point, the tree above $\sigma$ has remained
 unchanged. i.e.,
\begin{align}\nonumber
A_n = \big\{\sigma\in T_n: & \text{ for some }m\leq n,
\sigma\in \ell(T_{m-1})\text{ and}\\ \nonumber
&(T_m\setminus T_{m-1})\cap [\sigma]^\preceq= (T_n\setminus T_{m-1})\cap [\sigma]^\preceq
\supsetneq \{\sigma\}\big\}.
\end{align}

Moreover, we assume that for every $\sigma\in A_n$,
$\ell(T_n\cap[\sigma]^\preceq)\subseteq \omega^l$ for some
$l\in\omega$. This is because we can require in Lemma \ref{schlem9}
that each $T'_i$ being \heighthomogeneous\, i.e., its leaves
lies on the same level.

At step $n$, we wait for a time that
 above some $\sigma\in A_n$,
$\Psi$ split by that time.
For every $\sigma\in A_n$,
let $S_\sigma = \ell(T_n)\cap [\sigma]^\preceq$.
Wait for such a time $t$ that there exists
a $\sigma\in A_n$
a subset $B_\sigma=\{\eta_i\}_{i<j}$ of $S_\sigma$
with $S_\sigma\setminus B_\sigma$ being
$q_T$-small over every $\tau\in \{\tau\in [\sigma]^\preceq:
\text{ for some }i<j, \tau\preceq \eta_i \}$
such that for every $i<j$,
there exists a finite \heighthomogeneous\ tree
$T_i'\subseteq \t{T}\cap [\eta_i]^\preceq$ with
$\eta_i$ as its stem such that
\begin{itemize}
\item $T_i'\supsetneq \{\eta_i\}$ is $\h{p}/2$-bushy over $\eta_i$;
\item For every $\tau\in \ell(T'_i)$,
there exists an $n$ such that
 $(\Psi^\tau\uhr n)\downarrow\in V_i$.
 \end{itemize}
 Where $[V_i]^\preceq,i<j$ are mutually disjoint.
 By Lemma \ref{schlem9} such time $t$ exists.
 In this case we say the tree grows due to $\sigma$
 at step $n$.

 Let $T_{n+1} = T_n\cup(\cup_{i<j}T'_i)$
 and declare $S_\sigma\setminus B_\sigma$ leaves of
 $\t{T}'$.
 Let $$\t{T}' = \cup_n T_n\text{ and }
 T' = T\cap \{\rho\in \t{T}':[\rho]\cap [\t{T}']\ne\emptyset\}.$$
 We show that $(\eta,\t{T}',T',\h{p}/2,2q_T)$
 is the desired extension.
 It's easy to see that $\Psi^Y$ computes $Y$
 if $Y\in [T']$ since we have made sure that
 $\Psi$ split on $T'$.
 It's also trivial to verify item (1)(2)(3)(5) of
 definition of condition.
 By the construction of $\t{T}'$,
 it's direct that $\t{T}'$ is $\h{p}/2$-bushy
 over $\eta$.

Now we verify item (4) of the definition of condition,
which is simple but tedious.
Let $D$ denote the set of nodes that is declared to be leaves
of $\t{T}'$.
It's easy to see that for every $n$, every $\sigma\in A_n$,
$D$ is $q_T$-small over $\sigma$.
We prove that this is true for all $\tau\in T'$.
\begin{claim}\label{schclaim2}
For every $\tau\in T'$, $D$ is $q_T$-small over $\tau$.
\end{claim}
\begin{proof}
Fix a  $\tau\in T'$. Note that there must exist
a time point at which $\tau$ is put into $\t{T}'$.
i.e., there must exist
some $m$,
$\sigma\in \ell(T_{m})$, such that
$\tau\in (T_{m+1}\setminus T_{m})\cap [\sigma]^\preceq$.
Since $[\tau]\cap [\t{T}']\ne\emptyset$,
the tree must grow again due to $\sigma$,
i.e., there exists an $n$ with $\sigma\in A_n$
(which means the tree above $\sigma$ remains unchanged before
step $n$) such that
$$(T_{n+1}\setminus T_n)\cap [\sigma]^\preceq\ne
(T_{m+1}\setminus T_{m})\cap [\sigma]^\preceq.$$
Note that if the tree above $\tau$ does not grow at
step $n$,
then
all elements in  $[\tau]^\preceq \cap \ell(T_n)$ will be declared
leaf of $\t{T}'$ and the tree above $\tau$ will no longer grows,
which makes it impossible that $[\tau]\cap [\t{T}']\ne\emptyset$.
Therefore we have: $$[\tau]^\preceq\cap B_\sigma\ne\emptyset.$$
This implies,
by our construction, $S_\sigma\setminus B_\sigma$
is $q_T$-small over $\tau$. Meanwhile,
it's obvious that $B_\sigma\subseteq A_{n+1}$. Therefore
$D$ is $q_T$-small over every $\eta'\in B_\sigma$.
Thus $D$ is $q_T$-small over $\tau$.

\end{proof}

Next we argue that:
\begin{claim}\label{schclaim1}
 For every $\rho\in T$,
if $\rho\notin T'$, then it must be the case that
at some point $ n$, all elements in $\ell(T_n)\cap [\rho]^\preceq$
have been declared leaves. i.e.,
\begin{align}\label{scheq14}
\t{T}'\setminus T'\subseteq
\{\rho\in \t{T}:
\t{T}'\cap [\rho]^\preceq\text{ is finite and }
\ell(
\t{T}'\cap [\rho]^\preceq)\subseteq D\}\cup  (\t{T}\setminus T).
\end{align}
\end{claim}
\begin{proof}
The key point is:
\begin{align}\label{scheq11}
&\text{ for every }\zeta\in T,
\text{ if } \zeta
\text{ ever enters some }A_n,
\text{ then }[\zeta]\cap [\t{T}']\ne\emptyset.
\end{align}
Simply because each $B_\sigma$ still contains many elements in
$T$ if $\sigma\in T$.
Fix a $\rho\in \t{T}'\setminus T'$
such that $\rho\in T$.
Suppose otherwise that (\ref{scheq14}) is not true. By definition of $T'$,
$[\rho]\cap [\t{T}']=\emptyset$.
Note that there must exist
an $n$ and a $\sigma'\in A_n$ such that
$\rho\in (T_n\setminus T_{n-1})\cap [\sigma']^\preceq$.
Clearly $\sigma'\in T$ since $\sigma'\preceq\rho$.
Note that for every $\sigma\in T$, if $\sigma$ ever
enters $A_n$ at some step $n$, then there must exists some point
that the tree grows due to $\sigma$.
Which means at some step $n$, $B_{\sigma'}$ is defined.
Since $[\rho]^\preceq\cap B_{\sigma'}\ne\emptyset$
(otherwise all $\ell([\rho]^\preceq\cap T_n)$ will be declared leaves,
a contradiction to the otherwise assumption),
therefore
 $S_{\sigma'}\setminus B_{\sigma'}$
is $q_T$-small over $\rho$.
Combine with $\t{T}\setminus T$ being $q_T$ small over
$\rho$ and $S_{\sigma'}$ being $\h{p}/2$-big
over $\rho$, we have $$[\rho]^\preceq \cap B_{\sigma'}\cap T\text{ is }
(\h{p}/2-2q_T)\text{-big over }\rho.$$
Therefore  there exists a $\tau\in [\rho]^\preceq\cap B_{\sigma'}\cap T\ne\emptyset$.
But $B_{\sigma'}\subseteq A_n$,
so $\tau\in A_n$.
As we observed in (\ref{scheq11}), $[\tau]\cap [\t{T}']\ne\emptyset$,
which implies $[\rho]\cap [\t{T}']\ne\emptyset$,
a contradiction.
\end{proof}
Now we are ready to check item (4).
For every $\tau\in T'$,  $$\t{T}'\setminus T'\subseteq
(\t{T}\setminus T)\cup (T\setminus T')$$
and $(\t{T}\setminus T)$ is $q_T$-small over $\tau$
(since $\tau\in T'\subseteq T$).
On the other hand, if $T\setminus T'$ is not $q_T$-small over
$\tau$, then by Claim \ref{schclaim1},
$D$ is not $q_T$-small over $\tau$,
a contradiction to Claim \ref{schclaim2}.
Thus $\t{T}'\setminus T'$ is $2q_T$-small over $\tau$
and we are done.

\end{proof}

\begin{proof}[Proof of Theorem \ref{schth1}]

The initial condition is
$(\bot, h^{<\omega},T,h_1,2)$
where
$$T = \{\sigma\in h^{<\omega}: \text{ for every }n\leq |\sigma|,
\sigma(n)\ne \Psi^X_n(n)\text{ if }\Psi^X_n(n)\downarrow\}$$
 and $h_1(n) = h(n+1)$.
It's easy to verify that $(\bot, h^{<\omega},T,h_1,2)$
is indeed a condition.
By Lemma \ref{schlem10}, there exists
a sequence of conditions
$d_0\supseteq d_1\supseteq\cdots$ where
$d_s = (\eta_s,\t{T}_s,T_s,p_s,q_s)$
such that every requirement is forced at some point.
Thus $G=\cup_s \eta_s$ is a member of all
conditions $d_s$, therefore $G$
satisfy all requirements.
Clearly $G\in \DNR^X_h$ since $G\in d_0$. Thus we are done.

\end{proof}

\bibliographystyle{amsplain}
\bibliography{F:/6+1/Draft/bibliographylogic}

\end{document}